\DeclareSymbolFont{rsfscript}{OMS}{rsfs}{m}{n}
\DeclareSymbolFontAlphabet{\mathrsfs}{rsfscript}
\def\softd{{\leavevmode\setbox1=\hbox{d}\hbox
            to 1.15\wd1{d\kern-0.2ex{\char039}\hss}}}    
\def\softl{l\kern-0.3ex\raise0.1ex\hbox{'}\kern-0.3ex}   
\numberwithin{equation}{section}
\newtheorem{prop}{Proposition}[section]
\newtheorem{lem}[prop]{Lemma}
\newtheorem{cor}[prop]{Corollary}
\theoremstyle{remark}
\def\Dc{\mathrel{\mathrsfs{D}}}
\def\Lc{\mathrel{\mathrsfs{L}}}
\def\Rc{\mathrel{\mathrsfs{R}}}
\def\A{\mathfrak{A}}
\def\B{\mathfrak{B}}
\def\T{\mathfrak{T}}
\def\S{\mathfrak{S}}
\def\R{\mathfrak{R}}
\def\L{\mathfrak{L}}
\def\wt{\widetilde}
\def\Ga{\Gamma}
\def\De{\Delta}
\def\Te{\Theta}
\def\co{\mathrm{c}}
\def\r{\mathrm{r}}
\def\l{\mathrm{l}}
\def\la{\lambda}
\def\al{\alpha}
\def\be{\beta}
\def\ga{\gamma}
\def\lv{\mathfrak{l}}
\def\rv{\mathfrak{r}}
\def\cb{\mathbf{c}}
\def\sk{\mathsf{sk}}
\def\Ls{\mathsf{L}}
\def\Rs{\mathsf{R}}
\def\Sub{\mathsf{Sub}}
\def\ol{\overline}
\def\wt{\widetilde}
\def\wh{\widehat}
\newcommand{\pseudo}{pseudosemilattice}
\newcommand{\spseudo}{strict pseudosemilattice}
\def\circledm{\protect\mathbin{\hbox
    {\protect$\bigcirc$\rlap{\kern-8.2pt\raise0pt\hbox
    {\protect$\mathtt{m}$}}}}}        
\def\smallcircledm{\protect\mathbin{\hbox
    {\protect$\bigcirc$\rlap{\kern-6.9pt\raise0pt\hbox
    {\protect$\mathtt{m}$}}}}}
\def\wr{\mathrel{{\le}_{\Rc}}}
\def\wl{\mathrel{{\le}_{\Lc}}}
\newcommand{\lra}{\longrightarrow}
\renewcommand{\iff}{if and only if}
\newcommand{\st}{such that}
\newcommand{\Vnki}{{\bf V}_{n,k,i}}
\newcommand{\Vnkj}{{\bf V}_{n,k,j}}
\newcommand{\Vnkjc}{{\bf V}^*_{n,k,j}}
\newcommand{\Vmlj}{{\bf V}_{m,l,j}}
\newcommand{\unkiv}{$u_{n,k,i}\approx v_{n,k,i}$}
\newcommand{\unkicv}{$u_{n,k,i}^*\approx v_{n,k,i}^*$}
\newcommand{\unki}{u_{n,k,i}}
\newcommand{\vnki}{v_{n,k,i}}
\newcommand{\anki}{\alpha_{n,k,i}}
\newcommand{\bnki}{\beta_{n,k,i}}
\newcommand{\ankj}{\alpha_{n,k,j}}
\newcommand{\bnkj}{\beta_{n,k,j}}
\newcommand{\amlj}{\alpha_{m,l,j}}
\newcommand{\bmlj}{\beta_{m,l,j}}
\newcommand{\nki}{(\anki,\bnki)}
\newcommand{\nkic}{(\anki^*,\bnki^*)}
\newcommand{\nkj}{(\ankj,\bnkj)}
\newcommand{\nkjc}{(\ankj^*,\bnkj^*)}
\newcommand{\mlj}{(\amlj,\bmlj)}
\newcommand{\mljc}{(\amlj^*,\bmlj^*)}
\title[]{A family of varieties of pseudosemilattices}
\author{Lu\'\i s Oliveira}
\address{Departamento de Matem\'atica Pura,
Faculdade de Ci\^encias da Universidade do Porto,
R. Campo Alegre, 687, 4169-007 Porto, Portugal}
\email{loliveir@fc.up.pt}
\begin{document}

\begin{abstract}
In \cite{l4}, a basis of identities $\{u_n\approx v_n\,\mid\; n\geq 2\}$ for the variety {\bf SPS} of all \spseudo s was determined. Each one of these identities $u_n\approx v_n$ has a peculiar 2-content $D_n$. In this paper we study the varieties of \pseudo s defined by sets of identities, all with 2-content the same $D_n$. We present here the family of all these varieties and show that each variety from this family is defined by a single identity also with 2-content $D_n$. This paper ends with the study of the inclusion relation between the varieties of this family.
\end{abstract}

\subjclass[2010]{08B15, 08B05, 08B20, 20M17}
\keywords{}

\maketitle

\section{Introduction}

A \emph{regular semigroup} is a semigroup $S$ for which every $x\in S$ has an $x'\in S$ \st\ $xx'x=x$. Thus $xx'$ and $x'x$ are idempotents in $S$ and we shall denote the set of all idempotents of $S$ by $E(S)$. Consider the following two binary relations on $E(S)$:  
\[e\,\wr f\,\Leftrightarrow\,e=fe\quad\mbox{ and }\quad e\,\wl f\,\Leftrightarrow\,e=ef;\] 
and set $\leq\,=\,\wr\cap\wl$. Then $\wr$ and $\wl$ are quasi-orders on $E(S)$, while $\leq$ is a partial order on $E(S)$. We shall denote by $(f]_{\Rc}$ the set of idempotents $e$ such that $e\,\wr f$. Similarly, we define $(f]_{\Lc}$ and $(f]_{\leq}\,$. We can introduce also the following two equivalence relations on $E(S)$:
\[e\Rc f \Leftrightarrow (e]_{\Rc}=(f]_{\Rc}\quad\mbox{ and }\quad e\Lc f \Leftrightarrow (e]_{\Lc}=(f]_{\Lc}\, ,\]
or equivalently, $\Rc\,=\,\wr\cap\mathrel{{\ge}_{\Rc}}\,$ and $\,\Lc\,=\,\wl\cap \mathrel{{\ge}_{\Lc}}$ for ${\ge}_{\Rc}$ and ${\ge}_{\Lc}$ the expected reverse relations corresponding to $\wr$ and $\wl$, respectively. Thus $\Rc\cap\Lc$ is just the identity relation. 

Any finite sequence $f_1, f_2,\cdots , f_m$ of alternately $\Rc$- or $\Lc$-equivalent idempotents of $S$ contains a subsequence $f_1=e_1,e_2,\cdots, e_n=f_m$ with the same property but where no two consecutive elements are equal. An \emph{$E$-chain} is then a finite sequence of alternately $\Rc$- or $\Lc$-equivalent idempotents with no two consecutive elements equal. Thus two idempotents are $(\Rc\vee\Lc)$-equivalent, and we shall say they are \emph{connected}, if there exists an $E$-chain starting at one of them and ending at the other one. The \emph{connected components} of $E(S)$ are the $(\Rc\vee\Lc)$-equivalence classes. 

The equivalence relations $\Rc$ and $\Lc$ are clearly the restriction to $E(S)$ of the homonymous Green's relations on the semigroup $S$. However, if we consider the Green's relation $\Dc$ ($=\,\Rc\vee\Lc$) on $S$, the relation $\Rc\vee\Lc$ on $E(S)$ defined above may not be the restriction of $\Dc$ to $E(S)$. In fact, the set of idempotents of a $\Dc$-class of $S$ is the (disjoint) union of several connected components of $E(S)$. Nevertheless, we can guarantee that the relation $\Rc\vee\Lc$ on $E(S)$ is the restriction of $\Dc$ if $S$ is generated by its idempotents.

A regular semigroup $S$ is \emph{locally inverse} if for every ordered pair $(e,f)$ of idempotents of $S$, $(e]_{\Rc}\cap (f]_{\Lc}=(g]_{\leq}$ for some $g\in E(S)$. The idempotent $g$ is clearly unique since $\leq$ is a partial order. Thus, we can define a new binary algebra $(E(S),\wedge)$ for every locally inverse semigroup $S$ by setting $e\wedge f=g$. The algebra $(E(S),\wedge)$ so obtained is called \emph{the \pseudo\ of idempotents} of $S$, and the quasi-orders $\wr$ and $\wl$ on $E(S)$ can be recovered from the binary operation $\wedge$ by setting:
\[e\wr f\Leftrightarrow f\wedge e=e\quad\mbox{ and }\quad e\wl f\Leftrightarrow e\wedge f=e\, .\]
The \pseudo s of idempotents of locally inverse semigroups are idempotent binary algebras, although they are often not semigroups themselves. Nevertheless, the class of all these binary algebras constitutes a variety (Nambooripad \cite{na1}) given by the identities:
\begin{itemize}
\item[$(PS1)$] $x\wedge x\approx x\;$;
\item[$(PS2)$] $(x\wedge y)\wedge (x\wedge z)\approx(x\wedge y)\wedge z\; $;
\item[$(PS3)$] $((x\wedge y)\wedge (x\wedge z))\wedge (x\wedge w)\approx (x\wedge y)\wedge ((x\wedge z)\wedge (x\wedge w))\;$;
\end{itemize}
together with the left-right duals (PS2') and (PS3') of (PS2) and (PS3), respectively. We shall denote this variety by $\bf PS$.

The structure of \pseudo s is related to the notion of semilattice. Every \pseudo s is the union of its maximal subsemilattices, and further, it is a homomorphic image of another \pseudo\ whose maximal subsemilattices are disjoint \cite{mp1}. A \pseudo\ with disjoint maximal subsemilattices can be described \cite{bmp, past} as the union of disjoint maximal semilattices $E_{i\lambda}$ for $(i,\lambda)\in I\times\Lambda$ such that:
\begin{itemize}
\item[$(i)$] if $x_{i\lambda}\in E_{i\lambda}$ and $x_{j\mu}\in E_{j\mu}$, then $x_{i\lambda}\wedge x_{j\mu} \in E_{i\mu}$; 
\item[$(ii)$] $\cup_{i\in I}E_{i\lambda}$ is a left normal band, that is, an idempotent semigroup satisfying $xyz\approx xzy$;
\item[$(iii)$] $\cup_{\lambda\in \Lambda}E_{i\lambda}$ is a right normal band, that is, an idempotent semigroup satisfying $xyz\approx yxz$.
\end{itemize}
We can say even more, every \pseudo\ divides an elementary \pseudo\ \cite{mp1}, that is, a \pseudo\ with disjoint maximal subsemilattices all isomorphic (the semilattices $E_{i\lambda}$ above are all isomorphic).

An e-variety of regular semigroups \cite{ha1, ks1} is a class of these algebras closed for taking homomorphic images, direct products and regular subsemigroups. The class $\bf LI$ of all locally inverse semigroups is an example of an e-variety. Nambooripad's result \cite{na1} was generalized by Auinger \cite{au4} who proved that the mapping
\[\varphi: {\mathcal{L}}_e({\bf LI})\lra {\mathcal{L}}({\bf PS}),\quad {\bf V}\longmapsto \{(E(S),\wedge)\,|\, S\in {\bf V}\}\]
is a well-defined complete homomorphism from the lattice ${\mathcal{L}}_e({\bf LI})$ of e-varie\-ties of locally inverse semigroups onto the lattice ${\mathcal{L}}({\bf PS})$ of varie\-ties of \pseudo s. Thus, any information about ${\mathcal{L}}({\bf PS})$ is useful to understand the structure of ${\mathcal{L}}_e({\bf LI})$ itself.  

A \spseudo\ is the \pseudo\ of idempotents of some [combinatorial] strict regular semigroup, that is, of some subdirect product of completely simple and/or 0-simple semigroups. The class {\bf SPS} of all \spseudo s is a variety, and in fact it is the smallest variety of \pseudo s containing algebras that are not semigroups. On the other hand, the largest variety of \pseudo s whose algebras are all semigroups is the variety $\bf NB$ of all normal bands. It is a well known fact that ${\bf NB}\subseteq {\bf SPS}$. The set of identities satisfied by all \spseudo s was characterized by Auinger \cite{au4}: an identity $u\approx v$ is satisfied by all \spseudo s \iff\ the words $u$ and $v$ have the same \emph{leftmost letter}, the same \emph{rightmost letter}, and the same \emph{2-content} (see section 2 for the definition of 2-content). In \cite{l4} a basis $\{u_n\approx v_n\,:\, n\geq 2\}$ of identities for $\bf SPS$ was determined. The 2-content $D_n$ of the word $u_n$ has a very peculiar nature. In this paper we shall study the varieties of \pseudo s defined by a single identity whose words have some $D_n$ ($n\geq 2$) as their 2-content. This will allow us to define a family of varieties of \pseudo s which will gives us some incite into the structure of the lattice ${\mathcal{L}}({\bf PS})$. 

The terminology introduced in \cite{l4} and the results obtained in that paper are crucial for the unwind of the present one. Thus we shall devote the next section to recall the concepts and results from \cite{l4}. We shall call an identity \emph{non-trivial} if there is a \pseudo\ which does not satisfies it. 

For each $n\geq 2$, a family 
\[\{u_{n,k,i}\approx v_{n,k,i},\,u_{n,k,j}^*\approx v_{n,k,j}^*\,\mid \,k\geq 1,\,1\leq i,j\leq 2n\mbox{ and } j \mbox{ odd}\}\] 
of non-trivial identities will be introduced in section 3 which generalizes the identity $u_n\approx v_n$ (in fact $u_n\approx v_n$ corresponds to $u_{n,1,1}\approx v_{n,1,1}$). We shall see that this family contains all the identities needed to describe varieties of \pseudo s defined by identities with 2-content $D_n$. The list of all such varieties will be presented in section 4 together with the description of the inclusion relation between them. Further, it is shown in that same section that every variety from that list is defined by a single identity with 2-content $D_n$. Finally, in the last section, we shall study the inclusion relation between varieties of \pseudo s defined by identities with 2-content $D_n$ and varieties of \pseudo s defined by identities with 2-content $D_m$ for $n\neq m$.

\section{Recalling concepts and results from \cite{l4}}

The present section is entirely devoted to recall concepts and results obtained and used in \cite{l4}. Thus most details are left to the reader to consult \cite{l4}. 

Let $X$ be a non-empty set whose elements shall be called \emph{letters}. We shall denote by $\T(X)$ the set of all finite \emph{downward} (connected) trees with $(i)$ a unique top vertex, \emph{the root}; $(ii)$ each non-root vertex has a unique predecessor (a vertex placed above it in the tree but connected to it by an edge); $(iii)$ each non-leaf vertex $a$ (vertex of degree at least 2) has exactly two successors (vertices placed below $a$ in the tree but connected to $a$ by an edge), one to the left of $a$ and one to the right of $a$; and $(iv)$ the \emph{leaves} (vertices of degree at most 1) are labeled by letters of $X$. Thus the leaves appear at the bottom of the trees of $\T(X)$. A non-root vertex of a tree of $\T(X)$ is called a left/right vertex if that vertex is placed to the left/right of its predecessor. Hence each non-root vertex is either a left vertex or a right vertex. The root is considered neither a left vertex, nor a right vertex. 

We shall denote by $(F_2(X),\wedge)$ the absolutely free binary algebra on $X$. Thus, the elements of $F_2(X)$ are well-formed words on the alphabet $X\cup\{(,),\wedge\}$. We can associate inductively a tree from $\T(X)$ to each word of $F_2(X)$ by setting $\Ga(x)=\underset{x}{\bullet}$ for each $x\in X$ and then letting
 $$\tikz[scale=1]{\draw(0,0) node{$\Ga(u\wedge v):=$};\draw(1.5,-0.5)node{$\Ga(u)$};\draw(2.5,-0.5)node{$\Ga(v)$};\draw(2,0.5)node{$\bullet$};
 \draw (1.5,-0.3)--(2,0.5);\draw (2,0.5)--(2.5,-0.3);}
$$
for $u,v\in F_2(X)$. The mapping $\Gamma:F_2(X)\rightarrow \T(X)$ so obtained is in fact a bijection (see \cite{l1}), and if we introduce the binary operation $\wedge$ on $\T(X)$ by setting $\Ga(u)\wedge\Ga(v)=\Ga(u\wedge v)$ for any $u,v\in F_2(X)$, we obtain a model $(\T(X),\wedge)$ for the absolutely free binary algebra on $X$. 

There is always some ambiguity when referring to subwords of a word $u$ since $u$ can have several distinct copies of the same word as a subword. For example, the letter $x$ occurs twice as a subword of $x\wedge x$. We shall use $\Ga(u)$ to avoid this ambiguity. Let $\Sub(u)$ denote the set of all subwords of $u$ including repetitions, that is, if $v$ is a subword of $u$ then $\Sub(u)$ as a distinct copy of $v$ for each occurrence of $v$ as a subword of $u$; and for each vertex $a$ of $\Ga(u)$, let $\Ga(u,a)$ denote the downward subtree of $\Ga(u)$ having $a$ as the top vertex. The graph $\Ga(u)$ captures all the subword structure of $u$ in the sense that there is a natural bijection $\eta_u:V(\Ga(u))\rightarrow \Sub(u)$, where $V(\Ga(u))$ is the set of vertices of $\Ga(u)$, such that if $\eta_u(a)=v$ then $\Ga(v)$ is (isomorphic to) $\Ga(u,a)$. Thus $\eta_u(a)=\Ga(u)$ for $a$ the root of $\Ga(u)$ and the leaves of $\Ga(u)$ are in bijection with the one-letter subwords of $u$. Further, if $b$ and $c$ are respectively the left and right successors of $a$ in $\Ga(u)$, then $\eta_u(a)$ is the subword $\eta_u(b)\wedge\eta_u(c)$ of $u$. We can use now the vertices of $\Ga(u)$ to pinpoint the concrete subword we are referring to and avoid in this way any possible ambiguity that may occur.

In the following we define some combinatorial invariants of the words $u\in F_2(X)$. Let $\l(u)$ and $\r(u)$ be respectively the \emph{leftmost} and \emph{rightmost} letter in $u$, and let $\co(u)$ be the \emph{content} of $u$, that is, the set of letters that occur in $u$. We define also the \emph{2-content} $\co_2(u)$ inductively by setting $\co_2(x)=\{(x,x)\}$ for each $x\in X$, and letting
\[\co_2(u\wedge v)=\co_2(u)\cup\{(\l(u),\r(v))\}\cup \co_2(v)\]
for $u,v\in F_2(X)$. These combinatorial invariants could be defined using instead $\Ga(u)$ in the obvious way. We shall use however $\Ga(u)$ to introduce two other combinatorial invariants: the \emph{left content} $\co_l(u)$ and the \emph{right content} $\co_r(u)$ of a word $u\in F_2(X)$ are respectively the labels of the left leaves and the labels of the right leaves of $\Ga(u)$. In particular $\co_l(x)=\co_r(x)=\emptyset$ for every $x\in X$ since the root vertex is considered neither a left vertex nor a right vertex.

There is another type of trees introduced in \cite{l4} with all vertices labeled by letters of $X$ that we shall recall now. So, let $\B'(X)$ be the set of all finite non-trivial (connected) trees $\ga$ whose vertices are labeled by letters of $X$ and with an ordered pair $(\lv_\ga,\rv_\ga)$ of distinguished vertices connected by an edge. The vertices $\lv_\ga$ and $\rv_\ga$ shall be called respectively the \emph{left root} and the \emph{right root} of $\ga$. We can now partition the vertices of $\ga$ into two disjoint sets accordingly to their distance to $\lv_\ga$ (or to $\rv_\ga$): the set $L_\ga$ of all vertices with even distance to $\lv_\ga$ (or with odd distance to $\rv_\ga$) and the set $R_\ga$ of all vertices with odd distance to $\lv_\ga$ (or with even distance to $\rv_\ga$). Thus $\lv_\ga\in L_\ga$ and $\rv_\ga\in R_\ga$, and any edge of $\ga$ connects a vertex of $L_\ga$ with a vertex of $R_\ga$. Thus, we shall consider $\ga$ always as a bipartite graph (in fact a bipartite tree), and if $a\in L_\ga$ and $b\in R_\ga$ are connected by an edge, then we shall represent that edge by the ordered pair $(a,b)$. Further, a vertex from $L_\ga$ shall be called a \emph{left vertex} of $\ga$ while a vertex from $R_\ga$ shall be called a \emph{right vertex} of $\ga$. 

Let $\B(X)=\B'(X)\cup\{\underset{x}{\bullet}\,:\,x\in X\}$ and define $\lv_\ga=\rv_\ga=\ga$ for $\ga=\underset{x}{\bullet}\,$. For each $\ga\in\B'(X)$ let ${}^L\ga$ coincide with $\ga$ but now only with the left root as a distinguished vertex. However, this unique distinguished vertex continues to be seen as a left vertex in ${}^L\ga$, that is, one still sees the left/right vertices of $\ga$ as left/right vertices of ${}^L\ga$. For each $\ga=\underset{x}{\bullet}$ let ${}^L\ga$ be just the graph $\ga$ but now seen as a bipartite graph with only one left vertex and no right vertices; the only vertex of ${}^L\ga$ is now distinguished as a left root. We define $\ga^R$ dually and introduce the binary operation $\sqcap$ on $\B(X)$ by setting
\[\al\sqcap\be={}^L\al\mathrel{\dot{\cup}}\{(\lv_\al,\rv_\be)\}\mathrel{\dot{\cup}}\be^R\]
for $\al,\be\in\B(X)$. In other words, we construct $\al\sqcap\be$ by taking the disjoint union of $\al$ and $\be$, adding the edge $(\lv_\al,\rv_\be)$, and setting $\lv_\al$ and $\rv_\be$ respectively as the left root and the right root of $\al\sqcap\be$.

The binary algebra $(\B(X),\sqcap)$ is generated as such by the set $\{\underset{x}{\bullet}\,:\,x\in X\}$. Thus, there is a unique surjective homomorphism $\Delta: F_2(X) \rightarrow\B(X)$ such that $\Delta(x)= \underset{x}{\bullet}=\Ga(x)\,$. There is a standard procedure to obtain $\Delta(u)$ from $\Gamma(u)$: first set the leaves of $\Ga(u)$ (together with their labels) as the vertices of $\Delta(u)$; then let $\lv_{\Delta(u)}$ and $\rv_{\Delta(u)}$ be the leftmost and the rightmost leaves of $\Ga(u)$ respectively; and finally add an edge to $\De(u)$ for each non-leaf vertex $a$ of $\Ga(u)$, which connects the leftmost and rightmost leaves of $\Ga(u,a)$. In particular, $L_{\Delta(u)}$ is the set of left leaves of $\Gamma(u)$ while $R_{\Delta(u)}$ is the set of right leaves of $\Gamma(u)$ for each $u\in F_2(X)\setminus\{X\}$.

It is convenient to see the procedure just described as a partial bijection $\chi_u:\Ga(u)\rightarrow\De(u)$ whose domain is the set of vertices of $\Ga(u)$ and whose image is the all graph $\De(u)$. Thus $\chi_u$ induces a label preserving bijection from the set of leaves of $\Ga(u)$ onto the set of vertices of $\De(u)$ such that if $a$ is a non-leaf vertex of $\Ga(u)$ and $b$ and $c$ are respectively the leftmost and rightmost leaves of $\Ga(u,a)$, then $\chi_u(a)$ is the edge $(\chi_u(b),\chi_u(c))$ of $\De(u)$. This procedure was described in \cite{l4} using a different but equivalent method involving the notion of \emph{contraction of subtrees}. We recommend the reader to consult \cite{l4} for more details including the illustration of a concrete example. In particular, it was pointed out in that paper that if all vertices of $\ga\in\B (X)$ have degree at most two, then there exists a unique $u\in F_2(X)$ such that $\ga=\De(u)$ (in general $\De$ is not injective). This last observation will be useful for this paper.

We shall denote by $\cb_a$ the label of a labeled vertex $a$. The combinatorial invariants $\l(u)$, $\r(u)$, $\co(u)$, $\co_2(u)$, $\co_l(u)$ and $\co_r(u)$ of a word $u\in F_2(X)$ can be described using $\De(u)$: $\l(u)$ is the label of $\lv_{\De(u)}$ while $\r(u)$ is the label of $\rv_{\De(u)}$; $\co(u)$ is the set of labels of all vertices of $\De(u)$ while $\co_l(u)$ and $\co_r(u)$ are respectively the set of labels of all vertices of $L_{\De(u)}$ and $R_{\De(u)}$; and $\co_2(u)$ is constituted by the pairs $(\cb_a,\cb_b)$ for all edges $(a,b)$ of $\De(u)$ together with the pairs $(\cb_a,\cb_a)$ for all vertices $a$ of $\De(u)$. We can define now the combinatorial invariants $\l(\ga)$, $\r(\ga)$, $\co(\ga)$, $\co_2(\ga)$, $\co_l(\ga)$ and $\co_r(\ga)$ for each $\ga\in\B(u)$ as expected: $\l(\ga)=\cb_{\lv_\ga}$, $\r(\ga)=\cb_{\rv_\ga}$, and so on.

Let $a$ be a vertex of $\Ga(u)$ and let $s=\eta_u(a)$, a subword of $u$. Denote by $u(s\rightarrow t)$ the word obtained by replacing in $u$ the subword $s=\eta_u(a)$ with some word $t\in F_2(X)$. Thus $\Ga(s)=\Ga(u,a)$ and $\Ga(u(s\rightarrow t))$ is obtained from $\Ga(u)$ by substituting $\Ga(t)$ for the downward subtree $\Ga(u,a)$. If $a$ is the root of $\Ga(u)$, then $s=u$ and $u(s\rightarrow t)=t$, whence $\De(s)=\De(u)$ and $\De(u(s\rightarrow t))=\De(t)$. Now, assume that $a$ is a left vertex of $\Ga(u)$ and let $b$ be the leftmost leaf of $\Ga(u,a)$. Then ${}^L\De(s)$ is just the connected subtree $\chi_u(\Ga(u,a))$ of $\De(u)$ with the vertex $\chi_u(b)$ as the distinguished vertex. Furthermore, $\chi_u(b)$ is the only vertex of $\chi_u(\Ga(u,a))$ connected by an edge to vertices outside $\chi_u(\Ga(u,a))$. Thus if $c$ denotes the left root of ${}^L\De(t)$, then $\De(u(s\rightarrow t))$ is obtained from $\De(u)$ by substituting ${}^L\De(t)$ for the subtree $\chi_u(\Ga(u,a))$ (and each edge $(\chi_u(b),d)\in\Ga(u)$ with $d\not\in \chi_u(\Ga(u,a))$ is replaced by the edge $(c,d)$). A similar situation occurs if $a$ is a right vertex of $\Ga(u)$.

If $\psi$ is an endomorphism of $F_2(X)$, then $u\psi$ is the word obtained by replacing in $u$ each one-letter subword $x$ with $x\psi$, or equivalently, $\Ga(u\psi)$ is the graph obtained from $\Ga(u)$ by replacing each leaf $a$ with $\Ga(\cb_a\psi)$. Thus $\De(u\psi)$ is the graph resulting from replacing in $\De(u)$ each left vertex $a$ by the left-rooted tree $\Ls(a,\psi)={}^L\De(\cb_a\psi)$ and each right vertex $b$ by the right-rooted tree $\Rs(b,\psi)= \De(\cb_b\psi)^R$ (all these graphs are assumed to be pairwise disjoint); and then setting $\lv_{\Ls(\lv_{\De(u)}, \psi)}$ as the left root and $\rv_{\Rs(\rv_{\De(u)},\psi)}$ as the right root. Alternatively, $\De(u\psi)$ can be obtained as follows: form the disjoint union
$$\left(\bigcup_{a\in L_{\De(u)}}\Ls(a,\psi)\right)\,\cup\,\left( \bigcup_{b\in R_{\De(u)}}\Rs(b,\psi)\right)$$
of all graphs $\Ls(a,\psi)$ and $\Rs(b,\psi)$ and add the edge $(\lv_{\Ls(a,\psi)},\rv_{\Rs(b,\psi)})$ for each edge $(a,b)$ of $\De(u)$. For $(a,b)=(\lv_{\De(u)},\rv_{\De(u)})$ this yields the connection between the distinguished vertices $\lv_{\Ls(\lv_{\De(u)},\psi)}$ and $\rv_{\Rs(\rv_{\De(u)}, \psi)}$ of $\De(u\psi)$. The following corollary has been already stated in \cite{l4} and is an immediate consequence of the previous description of $\De(u\psi)$.

\begin{cor} \label{deltafullyinvariant} For all $u,v\in F_2(X)$ and each endomorphism $\psi:F_2(X)\to F_2(X)$, if $\De(u)=\De(v)$ then $\De(u\psi)=\De(v\psi)$. In particular, $\mathrm{ker}\De$ is a fully invariant congruence on $F_2(X)$.
\end{cor}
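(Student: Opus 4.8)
The plan is to prove the two assertions in sequence, deriving the second from the first. For the first assertion, I would fix $u,v \in F_2(X)$ with $\De(u)=\De(v)$ together with an endomorphism $\psi$, and show $\De(u\psi)=\De(v\psi)$ by invoking directly the explicit description of $\De(w\psi)$ given in the paragraph immediately preceding the corollary. That description exhibits $\De(w\psi)$ as being assembled purely from the data of $\De(w)$: one takes the disjoint union
\[\left(\bigcup_{a\in L_{\De(w)}}\Ls(a,\psi)\right)\cup\left(\bigcup_{b\in R_{\De(w)}}\Rs(b,\psi)\right),\]
adds an edge $(\lv_{\Ls(a,\psi)},\rv_{\Rs(b,\psi)})$ for each edge $(a,b)$ of $\De(w)$, and designates the roots via the distinguished vertices of $\De(w)$. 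Crucially, the left-rooted tree $\Ls(a,\psi)={}^L\De(\cb_a\psi)$ depends only on the label $\cb_a$ of the left vertex $a$ (and on $\psi$), and similarly $\Rs(b,\psi)=\De(\cb_b\psi)^R$ depends only on $\cb_b$; neither depends on $u$ or $v$ beyond the combinatorial structure of $\De$ itself.

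The key step is therefore to observe that the assembly recipe is a \emph{function of the isomorphism type of the labeled bipartite graph} $\De(w)$, in the precise sense that it uses only: the vertex sets $L_{\De(w)}$ and $R_{\De(w)}$ with their labels, the edge set of $\De(w)$, and the distinguished roots $\lv_{\De(w)},\rv_{\De(w)}$. Since by hypothesis $\De(u)=\De(v)$ (equality of labeled bipartite graphs with distinguished roots, not merely isomorphism), every ingredient fed into the recipe is literally the same for $u\psi$ and $v\psi$: the same index sets, the same constituent trees $\Ls(a,\psi)$ and $\Rs(b,\psi)$, the same edges added, and the same choice of roots. Consequently the two outputs coincide, giving $\De(u\psi)=\De(v\psi)$. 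This step is essentially a bookkeeping argument rather than a construction, since all the genuine work of describing $\De(w\psi)$ has already been carried out in the preceding paragraph.

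For the second assertion, I would recall that $\mathrm{ker}\,\De$ is already a congruence on $F_2(X)$ because $\De$ is a homomorphism of binary algebras (stated in the excerpt: $\De$ is the unique surjective homomorphism extending $x\mapsto\underset{x}{\bullet}$). It therefore remains only to check \emph{fully invariant}, that is, invariance under all endomorphisms $\psi$ of $F_2(X)$. But this is exactly the content of the first assertion: if $(u,v)\in\mathrm{ker}\,\De$, meaning $\De(u)=\De(v)$, then $\De(u\psi)=\De(v\psi)$, i.e.\ $(u\psi,v\psi)\in\mathrm{ker}\,\De$. Since $\psi$ was arbitrary, $\mathrm{ker}\,\De$ is stable under every endomorphism and hence fully invariant.

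I expect no serious obstacle here, as the corollary is explicitly flagged in the excerpt as ``an immediate consequence of the previous description of $\De(u\psi)$''. The only point that requires care is making the dependence explicit: one must verify that the construction of $\De(w\psi)$ truly references $\De(w)$ and nothing else about $w$ (in particular not $\Ga(w)$ directly), so that equality of the $\De$-images forces equality of the constructed images. Once this observation is made precise, both statements follow formally.
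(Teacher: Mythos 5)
Your proposal is correct and takes essentially the same approach as the paper: the paper offers no separate argument, stating only that the corollary is ``an immediate consequence of the previous description of $\De(u\psi)$,'' which is exactly the bookkeeping observation you spell out (the assembly of $\De(u\psi)$ uses only the labeled bipartite graph $\De(u)$ with its distinguished roots, nothing else about $u$). Your derivation of the second assertion — $\mathrm{ker}\,\De$ is a congruence because $\De$ is a homomorphism, and full invariance is precisely the first assertion — is likewise the intended route.
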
 

The \emph{skeleton} $\sk(u,\psi)$ of $\De(u\psi)$ is the subtree spanned by the set of vertices
$$\{\lv_{\Ls(a,\psi)}\mid a\in L_{\De(u)}\}\cup\{\rv_{\Rs(b,\psi)}\mid b\in R_{\De(u)}\}$$
(or spanned by all edges $(\lv_{\Ls(a,\psi)}, \rv_{\Rs(b,\psi)})$ for $(a,b)$ an edge in $\De(u)$). Further, we set $\lv_{L(\lv_{\De(u)},\psi)}$ and $\rv_{R(\rv_{\De(u)},\psi)}$ as the left and right roots of $\sk(u,\psi)$ respectively. Then $\sk(u,\psi)$ has the same graph structure as $\De(u)$ (including the same distinguished vertices) although with different vertex labels. To be more precise, the label of each $a\in L_{\De(u)}$ is changed from $\cb_a$ to $\l(\cb_a\psi)$ while the label of each $b\in R_{\De(u)}$ is changed from $\cb_b$ to $\r(\cb_b\psi)$. In case the left content of $u$ is disjoint from its right content, the skeleton $\sk(u,\psi)$ itself can be viewed as a graph of the form $\De(u\psi')$ for any endomorphism $\psi'$ satisfying $x\psi'=\l(x\psi)$ if $x\in \co_l(u)$ and $x\psi'=\r(x\psi)$ if $x\in \co_r(u)$. 

We need to make some conventions about the graphical representation of the bipartite graphs from $\B'(X)$. For each $\ga\in\B'(X)$ we shall arrange their vertices either in two columns (the left/right column representing the left/right vertices) or in two horizontal rows (the bottom/top row representing the left/right vertices); and we shall distinguish the left and right roots by especially representing the unique edge connecting these two vertices by a double line \tikz[baseline=0.15cm,scale=0.5]{\draw[double](0,0.5)--(1.5,0.5);}. When referring to one-vertex only distinguished graphs, we shall use an ``encircled bullet'' \tikz[baseline=-4pt,scale=0.3]{\coordinate  (1) at (0,0);
 \draw (1) node{$\bullet$}; \draw (1)circle (10pt);} to indicate the distinguished vertex.

Next we shall introduce two rules for changing the graphs of $\B'(X)$, but we need to define first the notion of thorn. A \emph{thorn} in a graph $\al\in\B'(X)$ is a pair $\{e,a\}$ consisting of a degree one vertex $a$ together with the edge $e$ having $a$ as one of its endpoints such that both $a$ and the other endpoint of $e$ have the same label. The thorn $\{e,a\}$ is called \emph{essential} if $a$ is one of the distinguished vertices of $\al$; otherwise it is called a \emph{non-essential thorn}. These notions of essential and non-essential thorns shall be considered also for one-vertex only distinguished graphs and for non-distinguished vertex graphs with the obvious adaptations. The two reduction rules are now introduced as follows:
\begin{enumerate}
\item[(i)] remove a non-essential thorn $\{e,a\}$ from $\al$. This rule may be visualized graphically as

\centerline{\tikz[baseline=-0.1cm,scale=0.5]{
\coordinate[label=below:$x$] (1) at (0,0);
\coordinate[label=below:$x$] (2) at (2,0);
\coordinate[label=below:$x$] (3) at (4,0);
\draw (1) node{$\bullet$};
\draw (2) node{$\bullet$};
\draw (3) node{$\bullet$};
\draw (3,0) node{$\mapsto$};
\draw (1)--(2);
\draw[decorate,decoration=saw] (1)--(2,1);
\draw[decorate,decoration=saw] (3)--(6,1);
}
and \tikz[baseline=-0.1cm,scale=0.5]{
\coordinate[label=below:$x$] (1) at (0,0);
\coordinate[label=below:$x$] (2) at (2,0);
\coordinate[label=below:$x$] (3) at (6,0);
\draw (1) node{$\bullet$};
\draw (2) node{$\bullet$};
\draw (3) node{$\bullet$};
\draw (3,0) node{$\mapsto$};
\draw (1)--(2);
\draw[decorate,decoration=saw] (2)--(0,1);
\draw[decorate,decoration=saw] (3)--(4,1);
\filldraw (6.5,0)circle (0.5pt);}}
\item[(ii)] suppose that two edges $e$ and $f$ have a vertex in common and that the two other (distinct) vertices $a$ and $b$ have the same label; then identify the two edges $e$ and $f$ and the vertices $a$ and $b$ (and retain their label). If one of the merged vertices happens to be a distinguished one then so is the resulting vertex. Graphically, this rule may be visualized as

    \centerline{
\tikz[baseline=0.2cm,scale=0.5]{\coordinate [label=below:$x$] (1) at (0,0.5);\coordinate [label=above:$y$] (2) at (2,1);
\coordinate[label=below:$y$] (3) at (2,0); \coordinate [label=below:$x$] (4) at (4,0.5); \coordinate [label=below:$y$] (5) at (6,0.5); \draw (2,1) -- (0,0.5) --(2,0); \draw (0,0.5) node{$\bullet$}; \draw (2,1) node{$\bullet$}; \draw (2,0) node{$\bullet$};\draw (3,0.5) node{$\mapsto$};\draw (4,0.5)--(6,0.5);\draw (4,0.5) node{$\bullet$}; \draw[decorate,decoration=saw](0,0.5)--(1.5,2);
\draw[decorate,decoration=saw](2,0)--(0,-0.5);
\draw[decorate,decoration=saw](2,1)--(0,1.5);
\draw[decorate,decoration=saw](4,0.5)--(5.5,2);
\draw[decorate,decoration=saw](6,0.5)--(4,-0.5);
\draw[decorate,decoration=saw](6,0.5)--(4,1.5);
\draw(6,0.5)node{$\bullet$};}
and \tikz[baseline=0.2cm,scale=0.5]{
\coordinate[label=above:$y$] (1) at (0,1);
\coordinate[label=below:$y$] (2) at (0,0);
\coordinate[label=below:$x$] (3) at (2,0.5);
\coordinate[label=below:$y$] (4) at (4,0.5);
\coordinate[label=below:$x$] (5) at (6,0.5);
\draw (1) node{$\bullet$};
\draw (2) node{$\bullet$};
\draw (3) node{$\bullet$};
\draw (4) node{$\bullet$};
\draw (5) node{$\bullet$};
\draw (3,0.5) node{$\mapsto$};
\draw (1)--(3)--(2); \draw (4)--(5);
\draw[decorate,decoration=saw](0,1)--(2,1.5);
\draw[decorate,decoration=saw](0,0)--(2,-0.5);
\draw[decorate,decoration=saw](2,0.5)--(0.5,2);
\draw[decorate,decoration=saw](4,0.5)--(6,1.5);
\draw[decorate,decoration=saw](4,0.5)--(6,-0.5);
\draw[decorate,decoration=saw](6,0.5)--(4.5,2);
\filldraw (6.5,0.5)circle (0.5pt);}
}
\end{enumerate}
Rule (i) is referred to as the \emph{deletion of a thorn} while rule (ii) is called an \emph{edge-folding}.

A graph $\al\in\B'(X)$ is called \emph{reduced} if none of the two rules above can be applied to it. If $\al\in\B'(X)$ is not reduced, then we can always obtain a reduced graph by applying the rules (i) and (ii) until no more reductions are possible. Of course, we can apply these reductions in many different orders. Nevertheless, we always get the same reduced graph from a given $\al$ independently of the order of reductions we choose to apply. We shall denote by $\ol{\al}$ the \emph{reduced form} of $\al$, that is, the unique reduced graph obtained from $\al$ by applying the rules (i) and (ii). We can however obtain $\ol{\al}$ by first carry out all possible edge-foldings and then carry out all possible thorn deletions (note that a thorn deletion does not produce any possible new edge-folding). Furthermore, the edge-folding reduced graph obtained form $\al$ is always the same independently of the order in which we apply the edge-folding reductions. We shall denote by $\wt{\al}$ the edge-folding reduced graph obtained from $\al$, and thus we can see the reduction from $\al$ into $\ol{\al}$ as the two step process $\al\rightarrow\wt{\al}\rightarrow \ol{\al}$. The first step of this process induces a natural graph homomorphism from $\al$ onto $\wt{\al}$ preserving the labels and the left and right roots. In the second step of this process, we can see $\ol{\al}$ both as a subgraph of $\wt{\al}$ (preserving the labels and the left and right roots) and as the graph obtained from $\wt{\al}$ by `contracting' to a vertex each non-essential thorn $\{a,e\}$ together with the other endpoint of $e$, say $b$, and keeping the label of $b$ (or $a$).

Let $\A(X)$ be the set of all reduced graphs from $\B'(X)$ and define a binary operation $\wedge$ on $\A(X)$ by setting
\[\al\wedge\be=\ol{\al\sqcap\be}\, .\]
Set also $\ol{\al}= $\tikz[baseline=2.25pt,scale=0.3]{\coordinate [label=below:$x$] (4) at (0,0.5); \coordinate [label=below:$x$] (5) at (2.5,0.5);\draw[double] (4) --(5);\draw (4) node{$\bullet$};\draw (5) node{$\bullet$};} for $\al=\underset{x}{\bullet}$ and let $\Te(u)=\ol{\De(u)}$. The mapping $F_2(X)\rightarrow\A(X), \;u\rightarrow\Te(u)$ is a surjective homomorphism, and in fact it is the canonical homomorphism which extends the mapping $x\rightarrow$\tikz[baseline=2.25pt, scale=0.3]{\coordinate [label=below:$x$] (4) at (0,0.5); \coordinate [label=below:$x$] (5) at (2.5,0.5);\draw[double] (4) --(5);\draw (4) node{$\bullet$};\draw (5) node{$\bullet$};} for each $x\in X$. It was proved in \cite{l4} that $\A(X)$ is a model for the free \pseudo\ on $X$ if we identify each $x\in X$ with \tikz[baseline=2.25pt,scale=0.3]{\coordinate [label=below:$x$] (4) at (0,0.5); \coordinate [label=below:$x$] (5) at (2.5,0.5);\draw[double] (4) --(5);\draw (4) node{$\bullet$};\draw (5) node{$\bullet$};}. For future reference we restate this result in the following proposition together with some more information about $\A(X)$ obtained in \cite{l4}:

\begin{prop}\label{modelA}
The binary algebra $(\A(X),\wedge)$ is a model for the free \pseudo\ on $X$ if we identify each $x\in X$ with \tikz[baseline=2.25pt,scale=0.3]{\coordinate [label=below:$x$] (4) at (0,0.5); \coordinate [label=below:$x$] (5) at (2.5,0.5);\draw[double] (4) --(5);\draw (4) node{$\bullet$};\draw (5) node{$\bullet$};}. Further, the maximal subsemilattices of $\A(X)$ are the sets
\[\S_{x,y}(X)=\{\al\in\A(X)\mid \l(\al)=x\mbox{ and }\r(\al)=y\}\]
for $x,y\in X$, while the maximal right normal subbands and the maximal left normal subbands are respectively
\[\R_x(X)=\{\al\in\A(X)\mid\l(\al)=x\}\]
and
\[\L_x(X)=\{\al\in\A(X)\mid\r(\al)=x\}\]
for $x\in X$.
\end{prop}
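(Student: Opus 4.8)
The plan is to take the freeness of $(\A(X),\wedge)$ as already established --- it is the result of \cite{l4} recalled immediately before the statement --- and to read off the description of the maximal subsemilattices and of the maximal normal subbands from the explicit operation $\al\wedge\be=\ol{\al\sqcap\be}$. The whole argument will rest on one elementary observation about the roots. Since $\al\sqcap\be$ has $\lv_\al$ as its left root and $\rv_\be$ as its right root, and since neither an edge-folding nor a thorn deletion ever changes the labels of the two roots, passing to the reduced form gives
\begin{equation*}
\l(\al\wedge\be)=\l(\al)\qquad\text{and}\qquad\r(\al\wedge\be)=\r(\be)\tag{$\ast$}
\end{equation*}
for all $\al,\be\in\A(X)$. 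I would record $(\ast)$ first, since every later step is a formal consequence of it together with the defining identities of $\bf PS$.

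Next I would settle the ``maximal'' direction by confining each commuting or normal piece to a single block. If $T$ is any subsemilattice of $\A(X)$ and $\al,\be\in T$, then $\al\wedge\be=\be\wedge\al$, so $(\ast)$ forces $\l(\al)=\l(\be)$ and $\r(\al)=\r(\be)$; hence $T\subseteq\S_{x,y}(X)$ with $x=\l(\al)$ and $y=\r(\al)$. The same mechanism handles the bands: if $B$ is a right normal subband, then using associativity in the band $B$ and $xyz\approx yxz$ one has $(\al\wedge\be)\wedge\ga=(\be\wedge\al)\wedge\ga$, and applying $(\ast)$ to both sides yields $\l(\al)=\l(\be)$, so $B\subseteq\R_x(X)$; dually, $xyz\approx xzy$ together with $(\ast)$ confines every left normal subband to some $\L_x(X)$. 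Thus no subsemilattice, right normal subband, or left normal subband can meet two distinct blocks, which delivers maximality as soon as the blocks are shown to carry the asserted structure.

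The one genuinely technical point --- and the step I expect to be the main obstacle --- is to show that each $\S_{x,y}(X)$ is a \emph{single} semilattice, i.e.\ that $\wedge$ is commutative and associative on it (idempotency being $(PS1)$). Here I would argue directly with the two reduction rules. When $\l(\al)=\l(\be)=x$ and $\r(\al)=\r(\be)=y$, the new edge $(\lv_\al,\rv_\be)$ of $\al\sqcap\be$ meets, at $\lv_\al$, the root edge of $\al$ whose other endpoint $\rv_\al$ carries the \emph{same} label $y$ as $\rv_\be$; an edge-folding is therefore forced, identifying the right roots of $\al$ and $\be$, and this triggers a cascade of foldings that glues $\al$ and $\be$ outward from their now-identified roots, folding together exactly the initial segments on which the two reduced trees agree. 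Since this gluing is visibly symmetric in $\al$ and $\be$, one obtains $\ol{\al\sqcap\be}=\ol{\be\sqcap\al}$, i.e.\ commutativity; the value $\al\wedge\be$ is the reduced overlay of the two trees, the induced order $\al\le\be$ is tree-containment near the roots, and associativity follows because the overlay of three trees from a common root is independent of the order in which it is formed. Granting commutativity, the confinement argument of the previous paragraph shows that the $\S_{x,y}(X)$ are precisely the pairwise disjoint maximal subsemilattices of $\A(X)$.

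Finally I would invoke the structure theorem for \pseudo s with disjoint maximal subsemilattices \cite{bmp,past}. By $(\ast)$ all the sets $\S_{x,y}(X)$, $\R_x(X)$ and $\L_x(X)$ are closed under $\wedge$, and $(\ast)$ is exactly property $(i)$ of that theorem for the partition $\{\S_{x,y}(X)\}_{(x,y)\in X\times X}$, the first index being read as the leftmost letter and the second as the rightmost letter (every block is nonempty, since $x\wedge y\in\S_{x,y}(X)$). Applying the theorem to $\A(X)$, whose disjoint maximal subsemilattices we have just identified as the $\S_{x,y}(X)$, properties $(ii)$ and $(iii)$ assert precisely that $\cup_x\S_{x,y}(X)=\L_y(X)$ is a left normal band and $\cup_y\S_{x,y}(X)=\R_x(X)$ is a right normal band. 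Combining this with the confinement established earlier --- that every right (resp.\ left) normal subband has constant leftmost (resp.\ rightmost) letter --- gives maximality of the $\R_x(X)$ and $\L_x(X)$, completing the proof.
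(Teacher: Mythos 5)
The paper never proves this proposition: it is explicitly a restatement of results obtained in \cite{l4} (``It was proved in \cite{l4} that $\A(X)$ is a model for the free \pseudo\ on $X$\dots''), so there is no in-paper argument to measure you against, and a self-contained derivation like yours is necessarily a different route. In substance your argument is sound. The observation $(\ast)$ is correct: an edge-folding merges two equally labelled vertices and keeps distinguished vertices distinguished, and only non-essential thorns are ever deleted, so neither rule can change the label of either root. Given $(\ast)$, the confinement steps are exactly right: commutativity forces any subsemilattice into some $\S_{x,y}(X)$, and $xyz\approx yxz$ (resp.\ $xyz\approx xzy$) forces any right (resp.\ left) normal subband into some $\R_x(X)$ (resp.\ $\L_x(X)$). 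Your commutativity argument on $\S_{x,y}(X)$ also works, and can be stated more crisply than your ``cascade'': two edge-foldings turn $\al\sqcap\be$ into the overlay of $\al$ and $\be$ with the two left roots identified and the two right roots identified, a graph visibly symmetric in $\al$ and $\be$; the uniqueness of reduced forms asserted in the paper then gives $\ol{\al\sqcap\be}=\ol{\be\sqcap\al}$.

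Two spots need patching, though neither is fatal. First, associativity on $\S_{x,y}(X)$: what you actually must compute is $(\al\wedge\be)\wedge\ga=\ol{\ol{\al\sqcap\be}\sqcap\ga}$, so the ``order-independent overlay'' picture silently uses the compatibility $\ol{\ol{\si}\sqcap\ga}=\ol{\si\sqcap\ga}$ (and its mirror image). This is true --- every reduction step performed inside $\si$ remains a legal reduction step inside $\si\sqcap\ga$, and confluence finishes it; alternatively it follows at once from the paper's statement that $\Te:F_2(X)\to\A(X)$ is a homomorphism together with the surjectivity of $\De$ --- but it is precisely the fact that lets both bracketings reduce to the same triple overlay, so it must be stated. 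Second, the structure theorem of \cite{bmp, past} only asserts the existence of \emph{some} indexing $E_{i\lambda}$, $(i,\lambda)\in I\times\Lambda$, of the disjoint maximal subsemilattices satisfying $(i)$--$(iii)$; you apply $(ii)$ and $(iii)$ to \emph{your} indexing by pairs (leftmost letter, rightmost letter) as if the two indexings automatically agreed. They do, but this needs a short argument: since the blocks are pairwise disjoint and nonempty, two blocks $A,B$ satisfy $A\wedge B\subseteq B$ exactly when they share the theorem's first index, and by $(\ast)$ exactly when they share the leftmost letter (dually for the second index and the rightmost letter); hence the theorem's ``rows'' and ``columns'' are precisely the sets $\R_x(X)$ and $\L_y(X)$, and $(iii)$, $(ii)$ then give the desired right and left normal band structure. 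With these two patches your proof is complete.
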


Given an endomorphism $\varphi$ of $F_2(X)$, we shall denote by $\ol{\varphi}$ the unique endomorphism of $\A(X)$ that makes the following diagram commute:

\centerline{\tikz{
\node (1) at (0,0) {$\A(X)$};
\node (2) at (3,0) {$\A(X)$};
\node (3) at (0,2) {$F_2(X)$};
\node (4) at (3,2) {$F_2(X)$};
\path[->>,font=\small] (3) edge node[left] {$\Te$} (1);
\path[->>,font=\small] (4) edge node[right] {$\Te$} (2);
\path[->,font=\small] (1) edge node[below] {$\ol{\varphi}$} (2);
\path[->,font=\small] (3) edge node[above] {$\varphi$} (4);
}}

\noindent Thus $(\Te(u))\ol{\varphi}=\Te(u\varphi)=\ol{\De(u\varphi)}$ for any $u\in F_2(X)$, and in particular $($\tikz[baseline=2.25pt,scale=0.3]{\coordinate [label=left:$x$] (4) at (0,0.5); \coordinate [label=right:$x$] (5) at (2.5,0.5);\draw[double] (4) --(5);\draw (4) node{$\bullet$};\draw (5) node{$\bullet$};}$)\ol{\varphi}=$\tikz[baseline=2.25pt,scale=0.3]{\coordinate [label=left:$y$] (4) at (0,0.5); \coordinate [label=right:$y$] (5) at (2.5,0.5);\draw[double] (4) --(5);\draw (4) node{$\bullet$};\draw (5) node{$\bullet$};} if $x\varphi=y\in X$. Hence, if $X\varphi\subseteq X$, then we can obtain $\al\ol{\varphi}$ for $\al\in\A(X)$ by first setting $\be$ to be the graph $\al$ but with each label $x$ changed to $x\varphi$, and then reducing $\be$; that is, $\al\ol{\varphi}=\ol{\be}$. Conversely, given an endomorphism $\psi$ of $\A(X)$, we can construct an endomorphism $\varphi$ of $F_2(X)$ such that $\psi=\ol{\varphi}$, namely by setting for each $x\in X$, $x\varphi\in F_2(X)$ such that $\De(x\varphi)=($\tikz[baseline=2.25pt,scale=0.3]{\coordinate [label=left:$x$] (4) at (0,0.5); \coordinate [label=right:$x$] (5) at (2.5,0.5);\draw[double] (4) --(5);\draw (4) node{$\bullet$};\draw (5) node{$\bullet$};}$)\psi$. We should alert the reader that we shall jump very often between endomorphisms $\varphi$ of $F_2(X)$ and their corresponding endomorphisms $\ol{\varphi}$ of $\A(X)$ without further notice. 
 
Let $\al\in\A(X)$ and set $\wh\al=\underset{x}{\bullet}$ if $\al=$ \tikz[baseline=2.25pt,scale=0.3]{\coordinate [label=below:$x$] (4) at (0,0.5); \coordinate [label=below:$x$] (5) at (2.5,0.5);\draw[double] (4) --(5);\draw (4) node{$\bullet$};\draw (5) node{$\bullet$};} for some $x\in X$; otherwise, let $\wh\al$ be the (non-rooted) bipartite tree obtained from $\al$ by un-marking the distinguished roots and removing the existing thorns (that is, the thorns that were essential in $\al$). However, in this last case, the vertices of $\wh\al$ continue to be divided in left and right vertices as they were in $\al$. Define also
\[{}^l\al=\begin{cases} {}^L\al\setminus\{(\lv_\al,\rv_\al),\rv_\al\} &\text{ if }\{(\lv_\al,\rv_\al),\rv_\al\}\text{ is a thorn }\\ {}^L\al &\text{ otherwise}\end{cases}
\]
and $\al^r$ dually. If $\al=$\tikz[baseline=2.25pt,scale=0.3]{\coordinate [label=below:$x$] (4) at (0,0.5); \coordinate [label=below:$x$] (5) at (2.5,0.5);\draw[double] (4) --(5);\draw (4) node{$\bullet$};\draw (5) node{$\bullet$};} then ${}^l\al$ is the singleton graph $\underset{x}{\bullet}$ considered as a bipartite graph with one (distinguished) left vertex and no right vertex; the dual is assumed for $\al^r$. The next result compiles the main results of subsection 3.3 of \cite{l4}.

\begin{prop}\label{relA}
Let $\al,\be\in\A(X)$; then
\begin{enumerate}
\item $\be\wr\al$ if an only if ${}^l\al$ is a left-rooted subtree of ${}^l\be$;
\item $\be\wl\al$ if and only if $\al^r$ is a right-rooted subtree of $\be^r$;
\item $\be\le\al$ if and only if $\al$ is a bi-rooted subtree of $\be$;
\item $\al\Rc\be$ if and only if ${}^l\al={}^l\be$;
\item $\al\Lc\be$ if and only if $\al^r=\be^r$;
\item $\al \mathrel{(\Rc\vee\Lc)}\be$ if and only if $\wh\al=\wh\be$.
\end{enumerate}
\end{prop}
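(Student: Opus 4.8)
The plan is to reduce every assertion to the single computation $\al\wedge\be=\ol{\al\sqcap\be}$, since all six relations are defined from $\wedge$: by the formulas recalled in the introduction, $\be\wr\al$ means $\al\wedge\be=\be$, $\be\wl\al$ means $\be\wedge\al=\be$, $\be\le\al$ means both, and $\Rc$ (resp.\ $\Lc$) is the conjunction of $\wr$ with its reverse (resp.\ of $\wl$ with its reverse). So the whole proposition rests on a precise description of how $\al\sqcap\be$ reduces. The key fact I would isolate as a lemma is a \emph{zipping lemma}: because $\al$ and $\be$ are already reduced, no edge-folding or thorn deletion can take place inside the copy of $\al$ or inside the copy of $\be$; every reduction must involve the bridge edge $(\lv_\al,\rv_\be)$ added by $\sqcap$, so the reductions propagate as a single cascade that successively identifies vertices of $\al$ with vertices of $\be$. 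Tracking this cascade, and in particular checking that the essential thorn at $\rv_\al$ (resp.\ $\lv_\be$) is precisely the vertex that gets stripped off---which is why ${}^l\al$ and $\al^r$, rather than ${}^L\al$ and $\al^R$, are the correct objects---is the main technical crux of the whole argument.

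For part (1) I would argue both directions from the zipping lemma. For $\Leftarrow$, given a left-root-preserving embedding of ${}^l\al$ as a subtree of ${}^l\be$, I would use it to drive the folding of $\al\sqcap\be$: each vertex of ${}^l\al$ folds onto its image in $\be$, the bridge edge disappears into $\be$, and the essential thorn of $\al$ at $\rv_\al$, if present, becomes a non-essential thorn that is then deleted, so that $\ol{\al\sqcap\be}=\be$ and hence $\be\wr\al$. For $\Rightarrow$, I would run the cascade produced by $\al\wedge\be=\be$ in reverse: the identifications it performs define a label- and structure-preserving map ${}^l\al\to{}^l\be$ fixing the left root, and since $\al$ is reduced this map is injective with image a subtree of ${}^l\be$, i.e.\ ${}^l\al$ is a left-rooted subtree of ${}^l\be$. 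Part (2) is the left--right dual and needs no separate argument.

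Parts (3), (4) and (5) I would then read off. For (4), $\al\Rc\be$ is the conjunction $\al\wr\be$ and $\be\wr\al$, which by (1) says ${}^l\be$ is a left-rooted subtree of ${}^l\al$ and vice versa; mutual subtree inclusion of finite rooted trees forces ${}^l\al={}^l\be$, and the converse is immediate. Statement (5) is dual. For (3), $\be\le\al$ unwinds by (1) and (2) to: ${}^l\al$ is a left-rooted subtree of ${}^l\be$ and $\al^r$ is a right-rooted subtree of $\be^r$. A bi-rooted subtree inclusion of $\al$ in $\be$ clearly yields both one-sided inclusions, so only the converse needs work: I would show the two one-sided embeddings have a common image and jointly respect both roots, reconciling the boundary thorns that were removed in passing to ${}^l\al$ and $\al^r$, so as to recover a single bi-rooted embedding of $\al$ into $\be$.

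Finally, for (6) I would treat the two directions separately. For $\Rightarrow$ it suffices, since $\Rc\vee\Lc$ is generated by $\Rc$ and $\Lc$, to check that $\wh{(\cdot)}$ is invariant under each: if ${}^l\al={}^l\be$, then unmarking the left root and deleting the essential thorns yields $\wh\al=\wh\be$ (the right-root thorn has already been discarded inside ${}^l$, and equality of ${}^l\al$ and ${}^l\be$ fixes the left-root thorn status), and dually for $\Lc$; hence $\wh{(\cdot)}$ is constant on connected components. For $\Leftarrow$, given $\wh\al=\wh\be$ I would construct an $E$-chain from $\al$ to $\be$ on their common underlying tree $T=\wh\al=\wh\be$: an $\Rc$-step fixes the left root and relocates the right root, an $\Lc$-step does the reverse, and alternating such elementary moves transports the distinguished root-edge of $\al$ to that of $\be$ because $T$ is connected. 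Verifying that each elementary move is realized by a genuine reduced graph and correctly handles the appearance and disappearance of boundary thorns as a root passes a pendant vertex is, together with the zipping lemma, where I expect the real difficulty to lie.
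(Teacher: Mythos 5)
You should first be aware of how this statement sits in the paper: Proposition \ref{relA} is not proved here at all. The paper introduces it with the sentence ``The next result compiles the main results of subsection 3.3 of \cite{l4}'', so there is no in-paper argument to compare yours against; the only fair comparison is with the reduction machinery recalled in Section 2, and your proposal is exactly the natural argument built on that machinery. Your key ``zipping lemma'' is correct, and the reason is worth recording: an edge-folding identifies two neighbours of a common vertex, and since $\al$ and $\be$ are each edge-folding reduced, two distinct neighbours lying on the $\al$-side (or both on the $\be$-side) of any vertex class always carry distinct labels; hence every fold in $\al\sqcap\be$ pairs an $\al$-vertex with a $\be$-vertex, and inductively each merged class contains at most one vertex of each tree. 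Moreover, folds preserve the left/right bipartition, so merged classes pair same-sided vertices. Granting this, your $\Leftarrow$ argument for (1) is right (after folding ${}^l\al$ onto its image, the vertex $\rv_\al$, if it was an essential thorn of $\al$, hangs as a non-essential thorn of $\al\sqcap\be$ because the distinguished vertices of $\al\sqcap\be$ are $\lv_\al$ and $\rv_\be$, so it is deleted), parts (3)--(5) do read off from (1)--(2) via the uniqueness of root- and label-preserving embeddings into fold-reduced trees, and your two-step treatment of (6) is sound, including the observation that $\wh\al$ is computable from ${}^l\al$ alone.

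The one place where your sketch is genuinely thinner than a complete proof is the $\Rightarrow$ direction of (1): ``running the cascade in reverse'' must be supplemented by an accounting of the thorn-deletion phase, i.e.\ you must show that no vertex of ${}^l\al$ can be lost to a thorn deletion or end up at the excluded vertex $\rv_\be$ of ${}^l\be$. This does work out, precisely because $\al$ is reduced: a pure (unmerged) $\al$-vertex inherits its degree and its neighbours' labels from $\al$, so if its class is a thorn of $\wt{\al\sqcap\be}$ then it was already a thorn of $\al$, hence essential, hence is $\lv_\al$ or $\rv_\al$; the vertex $\lv_\al$ is protected because it is the left root of $\al\sqcap\be$, leaving only $\rv_\al$ --- which is exactly the vertex that ${}^l\al$ discards (and the same argument shows a vertex of ${}^l\al$ cannot map onto a right-root thorn of $\be$). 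One also needs that merged classes are never deleted (they would have to contain $\rv_\al$ and $\lv_\be$ simultaneously, which is impossible since these lie on opposite sides of the bipartition) and that $\lv_\be$ cannot be deleted when $\ol{\al\sqcap\be}=\be$ (its deletion would force $\l(\al)\neq\l(\be)$, contradicting that the left root of $\ol{\al\sqcap\be}$ is the class of $\lv_\al$). These are routine but necessary verifications; with them added I see no step of your plan that fails.
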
 

\noindent In particular, if $\al$ covers $\be$ (that is, $\be\leq\al$ and if $\be\leq\ga\leq\al$ for some $\ga$ then $\be=\ga$ or $\al=\ga$) and $\co_l(\be)\cap\co_r(\be)=\emptyset$, then $\be$ has exactly one more vertex (and one more edge) than $\al$.

In this paper we shall talk about identities in the context of varieties of \pseudo s. For example, when we say that two identities are equivalent, we mean that the \pseudo s that satisfy one of them are the same that satisfy the other one. For each variety $\bf V$ of \pseudo s there exists a fully invariant congruence $\rho_{\bf V}(X)$ on $\A(X)$ such that $\A(X)/\rho_{\bf V}(X)$ is the relatively free algebra on $X$ for the variety $\bf V$. Then, a set of identities $I$ is a basis of identities for $\bf V$ \iff\ $\{(\Te(u),\Te(v))\mid u\approx v\in I\}$ generates $\rho_{\bf V}(X)$ as a fully invariant congruence for $X$ a countably infinite set. We shall write only $\rho_{\bf V}$ instead of $\rho_{\bf V}(X)$ when we are considering the set $X$ to be countably infinite. We shall say that a binary relation $\sigma$ on $\A(X)$ is a \emph{consequence} of another binary relation $\tau$ if $\sigma$ is contained in the fully invariant congruence generated by $\tau$ (or equivalently, if the variety defined by the identities induced by $\sigma$ contains the variety defined by the identities induced by $\tau$). Two binary relations on $\A(X)$ are said to be \emph{equivalent} if they generate the same fully invariant congruences (or equivalently, if the corresponding identities define the same variety of \pseudo s). 

By \cite{au1} an identity $u\approx v$ is satisfied by all \spseudo s \iff\ \[(\l(u),\co_2(u),\r(u))=(\l(v),\co_2(v),\r(v))\,.\]
Thus $(\al,\be)\in\rho_{\bf SPS}$ \iff\ $\co_2(\al)=\co_2(\be)$ and $\al$ and $\be$ belong to the same maximal subsemilattice of $\A(X)$. A pair $(\al,\be)$ of elements of $\A(X)$ is called \emph{elementary} if
\begin{enumerate}
\item $(\l(\al),\co_2(\al),\r(\al))=(\l(\be),\co_2(\be), \r(\be))$,
\item $\co_l(\be)\cap\co_r(\be)=\emptyset$ (or equivalently $\co_l(\al)\cap\co_r(\al)=\emptyset$),
\item $\al$ covers $\be$,
\item and the unique degree $1$ vertex in $\be\setminus\al$ is adjacent to a distinguished vertex of $\be$.
\end{enumerate}
Thus elementary pairs induce non-trivial identities satisfied by all \spseudo s. In fact, for each elementary pair $(\al,\be)$ we can always find an identity $u\approx v$ such that $(\De(u),\De(v))=(\al,\be)$ and $v$ is obtained from $u$ by replacing either the first letter of $u$ or the last letter of $u$, say $x$, respectively with $x\wedge y$ or $y\wedge x$ for some $y\in X$. We compile in the following result some conclusions obtained in \cite{l4} although not all of them are explicitly stated their.

\begin{prop}\label{EquivEl}
If $(\al,\be)$ is a non-trivial pair of $\rho_{\bf SPS}$, then $(\al,\be)$ is equivalent to a finite set $I$ of elementary pairs whose graphs all belong to the same maximal subsemilattice of $\A(X)$ and have the same 2-content. Further, if $\al$ has disjoint left and right contents, then $(\l(\al_1), \co_2(\al_1),\r(\al_1))=(\l(\al),\co_2(\al),\r(\al))$ for each $(\al_1,\be_1)\in I$; otherwise, we can always say that $|\co(\al_1)|\leq 2|\co(\al)|$.
\end{prop}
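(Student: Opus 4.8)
The plan is to argue in two stages. First I would reduce the general case to the case where $\al$ has disjoint left and right contents — this is exactly where the sharp invariant $(\l,\co_2,\r)$ is traded for the bound $|\co(\al_1)|\le 2|\co(\al)|$. Second, I would decompose a non-trivial $\rho_{\bf SPS}$-pair with disjoint contents into finitely many elementary pairs of the same $2$-content, the core of this step resting on the machinery of \cite{l4}. Throughout I use that $(\al,\be)\in\rho_{\bf SPS}$ forces $\co_2(\al)=\co_2(\be)$; since $\co(\cdot)$ is read off the diagonal of $\co_2(\cdot)$ this already gives $\co(\al)=\co(\be)$ and places $\al,\be$ in a common maximal subsemilattice $\S_{x,y}(X)$ with $x=\l(\al)$, $y=\r(\al)$.

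For the disjointification, for each $z\in\co(\al)$ I would choose two fresh letters $z',z''$ and let $\psi$ be the endomorphism with $\De(z\psi)$ the single edge joining a left vertex labelled $z'$ to a right vertex labelled $z''$, and let $\varphi$ send $z'\mapsto z$ and $z''\mapsto z$. Applying $\ol\psi$ shows $(\al\ol\psi,\be\ol\psi)$ is a consequence of $(\al,\be)$, while $\ol{\psi\varphi}$ fixes every generator $\underset{z}{\bullet}$ (the edge $z'$--$z''$ collapses to a thorn that reduces away), so $\al\ol{\psi\varphi}=\al$ and $\be\ol{\psi\varphi}=\be$ give the reverse consequence; hence $(\al,\be)$ and $(\al\ol\psi,\be\ol\psi)$ are equivalent. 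By the description of $\De(u\psi)$ recalled just before Corollary \ref{deltafullyinvariant}, every left vertex of $\al\ol\psi$ carries a label $z'$ and every right vertex a label $z''$; as the families $\{z'\}$ and $\{z''\}$ are disjoint, $\al\ol\psi$ has disjoint contents, and reduction preserves this because both reduction rules identify only equally labelled vertices on the same side. Finally $\co(\al\ol\psi)\subseteq\{z',z''\mid z\in\co(\al)\}$, so $|\co(\al\ol\psi)|\le 2|\co(\al)|$. Thus I may assume from now on that $\al$ has disjoint contents, paying this size bound; when $\al$ already has disjoint contents the stage is skipped and the exact triple $(\l,\co_2,\r)$ is retained.

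Assuming $\co_l(\al)\cap\co_r(\al)=\emptyset$, in particular $x\ne y$, so no graph in sight can carry an essential thorn. Put $\ga=\al\wedge\be=\ol{\al\sqcap\be}$. Since $(x,y)$ is the label of the root edge of $\al$, it already lies in $\co_2(\al)$, whence $\co_2(\al\sqcap\be)=\co_2(\al)$; as reduction leaves $\co_2$ unchanged, $\co_2(\ga)=\co_2(\al)$, and the same bookkeeping shows $\ga$ has disjoint contents with $\l(\ga)=x$, $\r(\ga)=y$. At the level of congruences $(\al,\be)$ is equivalent to $\{(\al,\ga),(\ga,\be)\}$, because $(\al,\ga)=(\al\wedge\al,\al\wedge\be)$ and $(\ga,\be)=(\al\wedge\be,\be\wedge\be)$ arise from $(\al,\be)$, and $(\al,\be)$ returns by transitivity. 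It suffices to treat a pair $(\al,\ga)$ with $\ga\le\al$. By Proposition \ref{relA}(3), $\al$ is a bi-rooted subtree of $\ga$; choosing a maximal chain $\al=\delta_0>\delta_1>\cdots>\delta_m=\ga$ and invoking the remark after Proposition \ref{relA} (disjoint contents force each covering step to add exactly one vertex and one edge) yields covering pairs $(\delta_k,\delta_{k+1})$. Each $\delta_k$ is sandwiched, $\ga\le\delta_k\le\al$, so $\al$ is a bi-rooted subtree of $\delta_k$ which is a bi-rooted subtree of $\ga$, giving $\co_2(\al)\subseteq\co_2(\delta_k)\subseteq\co_2(\ga)=\co_2(\al)$ and hence $\co_2(\delta_k)=\co_2(\al)$. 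Thus every covering pair in the chain has the common triple $(x,\co_2(\al),y)$ and disjoint contents, so it satisfies conditions (1)--(3) of an elementary pair.

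The main obstacle is condition (4): in a covering pair the unique new degree-one vertex of $\delta_{k+1}$ may attach at an interior vertex rather than at a distinguished one, so the pair need not itself be elementary; moreover one must still establish the reverse consequence (that each covering pair follows from $(\al,\ga)$), which is not formal. Both points are precisely what the analysis of \cite{l4} supplies: using full invariance one rewrites an interior attachment as a substitution instance of an end attachment, replacing each covering pair by a finite set of genuinely elementary pairs -- all of $2$-content $\co_2(\al)$, as the substitution merely redistributes the structure already present at the attaching vertex -- and verifies equivalence in both directions. Collecting the elementary pairs obtained from the finitely many covering steps for $(\al,\ga)$ and for $(\ga,\be)$ produces the required finite set $I$; all its graphs lie in $\S_{x,y}(X)$ and share the $2$-content $\co_2(\al)$ (equal to $\co_2(\al\ol\psi)$, whose content has size $\le 2|\co(\al)|$, in the non-disjoint case), and each pair is non-trivial because its two graphs differ. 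This is the assertion of the Proposition.
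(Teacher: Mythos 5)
You are trying to prove a statement that this paper itself does not prove: Proposition \ref{EquivEl} is explicitly presented as a compilation of conclusions from \cite{l4}, so there is no in-paper argument to measure you against, only what a complete proof would require. Measured that way, your first stage is genuinely right and is the best part of the proposal: the letter-splitting substitution $z\mapsto z'\wedge z''$, inverted by $z',z''\mapsto z$ (using idempotency to see that $\ol{\psi\varphi}$ fixes $\al$ and $\be$), correctly reduces everything to the disjoint-content case and is exactly where the bound $|\co(\al_1)|\le 2|\co(\al)|$ comes from. The meet trick $\ga=\al\wedge\be$ and the covering-chain decomposition are also sound (this is the same device the paper uses in Proposition \ref{SubIn}). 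Incidentally, your worry that the reverse consequence ``is not formal'' is unfounded: since $\ga\le\de_k\le\al$, compatibility alone gives $(\de_k,\ga)=(\de_k\wedge\al,\,\de_k\wedge\ga)$ in the congruence generated by $(\al,\ga)$, and hence every covering pair is a consequence of $(\al,\ga)$ by transitivity; no appeal to \cite{l4} or to full invariance is needed there.

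The genuine gap is exactly the step you wave at: converting a covering pair whose new vertex attaches at an \emph{interior} vertex into elementary pairs \emph{while keeping the conclusions of the statement}. Your sketch (``rewrite an interior attachment as a substitution instance of an end attachment'') does not describe a mechanism that works. The one tool available for relocating attachments is root-moving in the style of Proposition \ref{DistVert}: a covering pair satisfying conditions (1)--(3) is equivalent to the elementary pair obtained by moving the distinguished vertices to an edge at the attachment site. But that elementary pair has $\l$ and $\r$ equal to the labels of the attachment edge, so the pairs produced from the several covering steps land in \emph{different} maximal subsemilattices --- contradicting precisely what you must prove, namely that all graphs of $I$ lie in one maximal subsemilattice and, in the disjoint case, that $(\l(\al_1),\r(\al_1))=(\l(\al),\r(\al))$. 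In this paper's own $D_n$ setting the return trip to the original subsemilattice after root-moving is accomplished by a relabeling automorphism that exploits the cyclic symmetry of $D_n$ (see the proofs of Proposition \ref{CompDualPEven} and Lemma \ref{coversanki}); for an arbitrary 2-content there is no such symmetry, so a substitution cannot simply ``redistribute the structure already present at the attaching vertex.'' A correct proof has to produce, for each interior attachment, elementary pairs rooted at an edge labelled $(\l(\al),\r(\al))$ with unchanged 2-content, and nothing in your outline (nor in the naive use of Proposition \ref{DistVert}) yields that; this is the actual content that \cite{l4} supplies, and it is missing here.
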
 

It was shown also in \cite{l4} that we do not need to be too rigorous about the last condition in the definition of elementary pair.

\begin{prop}\label{DistVert}
Let $(\al,\be)$ be an elementary pair and let $(a,b)$ be an edge of $\al$. Let $\al_1$ and $\be_1$ be respectively the graphs $\al$ and $\be$ but now with the vertices $a$ and $b$ as the distinguished vertices. Then $(\al,\be)$ and $(\al_1,\be_1)$ are equivalent.
\end{prop}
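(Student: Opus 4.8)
The plan is to realize the change of distinguished vertices as a multiplication of $(\al,\be)$ by a suitable one-edge graph, on the left or on the right, and to note that such a move is reversible; since a fully invariant congruence is in particular a congruence, and hence closed under multiplication by a fixed element, this will show that each of the two pairs is a consequence of the other. First I would record two preliminary facts. Condition (2) in the definition of an elementary pair forces both $\al$ and $\be$ to be \emph{thorn-free}, since a thorn joins two adjacent (hence oppositely coloured) vertices carrying a common label, and such a label would lie in $\co_l(\be)\cap\co_r(\be)=\emptyset$. Consequently, relabelling the roots of $\al$ or of $\be$ neither creates a thorn nor affects any edge-folding, so the graphs $\al_1$ and $\be_1$ obtained by declaring $(a,b)$ to be the distinguished edge are again reduced, i.e.\ genuine elements of $\A(X)$. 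Secondly, by Proposition~\ref{relA}(3) together with the covering hypothesis, $\al$ is a bi-rooted subtree of $\be$; thus $\lv_\al=\lv_\be$, $\rv_\al=\rv_\be$, and every edge of $\al$ — in particular $(a,b)$ — is also an edge of $\be$.

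Next I would reduce to a single elementary step. Since $\al$ is a tree, the distinguished edge $(\lv_\al,\rv_\al)$ is joined to $(a,b)$ by a unique path of edges in which consecutive edges share a vertex, so by transitivity it suffices to move the distinguished edge to an adjacent edge. Using the left--right symmetry of the construction, I may assume the two edges share the right root $q:=\rv_\al$, so that the new distinguished edge is $(a,q)$ with $a$ a left vertex adjacent to $q$. I would then take $\ga\in\A(X)$ to be the single-edge graph whose left vertex carries the label $\cb_a$ and whose right vertex carries the label $\cb_q$ (distinct labels, again by condition (2)), and claim that $\ga\wedge\al=\al_1$ and $\ga\wedge\be=\be_1$ for this one $\ga$. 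To verify this I would compute $\ol{\ga\sqcap\al}$ directly: forming $\ga\sqcap\al$ adds the edge $(\lv_\ga,q)$ and installs $\lv_\ga$ and $q$ as roots; the two edges now meeting at $\lv_\ga$ end in right vertices both labelled $\cb_q$, so one edge-folding collapses the right vertex of $\ga$ onto $q$; the two edges then meeting at $q$ end in left vertices both labelled $\cb_a$ (namely $a$ and the left vertex of $\ga$), so a second edge-folding collapses the left vertex of $\ga$ onto $a$ and carries the left-root marking to $a$. What survives is precisely $\al$ with distinguished edge $(a,q)$, that is $\al_1$; and performing the identical pair of foldings inside $\ga\sqcap\be$ yields $\be_1$.

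Finally I would assemble the conclusion. The equality $(\al_1,\be_1)=(\ga\wedge\al,\ga\wedge\be)$ places $(\al_1,\be_1)$ in the fully invariant congruence generated by $(\al,\be)$; the inverse step — multiplying on the left by the single-edge graph labelled $(\cb_{\lv_\al},\cb_q)$, which is legitimate because $\lv_\al$ is still adjacent to $q$ in $\al_1$ — places $(\al,\be)$ in the congruence generated by $(\al_1,\be_1)$. Iterating along the path, and invoking the dual right-multiplication statement for the steps that fix the left root and move the right one, would then give that the two pairs generate the same fully invariant congruence, which is exactly the asserted equivalence. I expect the main obstacle to be the verification that the two edge-foldings in the elementary step are \emph{clean}, i.e.\ that they merge only the vertices of $\ga$ and leave the rest of the graph — especially the surplus degree-one vertex of $\be\setminus\al$ — untouched. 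This is precisely what reducedness buys: every root-variant of the reduced, thorn-free graph $\be$ is again reduced, so $q$ has no left-neighbour labelled $\cb_a$ besides $a$, the surplus vertex has a label different from $\cb_a$, and no cascade of further foldings can occur.
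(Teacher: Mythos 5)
Your proof is correct. A caveat on comparison: this paper does not actually prove Proposition \ref{DistVert} --- it is recalled from \cite{l4} without proof --- so there is no in-paper argument to match against step by step. However, your mechanism is exactly the technique this paper uses in analogous situations: in the proof of Proposition \ref{SubIn} the roots are moved by computing $\Te(\cb_a)\wedge\al$ and the move is reversed by computing $\Te(x_i)\wedge\al'$, which is your ``multiply by a one-edge graph, use congruence closure, and observe reversibility'' argument almost verbatim. You also correctly isolate the three points that make it work: condition (2) of elementarity forces $\al$ and $\be$ to be thorn-free, so every re-rooting of them is still reduced and hence lies in $\A(X)$; the two edge-foldings you exhibit land on the re-rooted graphs $\al_1$ and $\be_1$, and since these are reduced, uniqueness of reduced forms gives $\ga\wedge\al=\al_1$ and $\ga\wedge\be=\be_1$ with no need to worry about further cascading reductions (your closing paragraph is more cautious than necessary --- confluence settles it); and a fully invariant congruence is in particular compatible with $\wedge$, so each pair is a consequence of the other. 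One point worth making explicit, which your write-up glosses over but does not need: after a one-step re-rooting the pair $(\al_1,\be_1)$ need no longer satisfy condition (4) of elementarity (the surplus degree-one vertex of $\be\setminus\al$ may lose its adjacency to a distinguished vertex), but your iteration never uses elementarity of the intermediate pairs --- only that they are reduced, thorn-free, and related by bi-rooted subtree inclusion --- so the induction along the path of edges is sound.
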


For each integer $n\geq 2$ let $x_1,\ldots,x_{2n}$ be distinct letters from $X$. We designate by $\al_n$ and $\be_n$ the following graphs from $\A(X)$:

\centerline{
\tikz[scale=0.5]{\draw(-1,1) node{$\al_n=$};
\coordinate[label=below:$x_1$] (1) at (0,0);
\coordinate[label=above:$x_2$] (2) at (1,2);
\coordinate[label=below:$x_3$] (3) at (2,0);
\coordinate[label=above:$x_4$] (4) at (3,2);
\coordinate[label=below:$x_{2n-1}$] (5) at (5,0);
\coordinate[label=above:$x_{2n}$] (6) at (6,2);
\coordinate[label=below:$x_{1}$] (7) at (7,0);
\draw (4,1)node{$\dots$};
\draw (1) node {$\bullet$};
\draw (2) node {$\bullet$};
\draw (3) node {$\bullet$};
\draw (4) node {$\bullet$};
\draw (5) node {$\bullet$};
\draw (6) node {$\bullet$};
\draw (7) node {$\bullet$};
\draw (4)--(3.2,1.3);
\draw (4.8,0.7)--(5);
\draw [double] (1)--(2);
\draw (2)--(3)--(4);
\draw (5)--(6)--(7);
\draw(9.5,1) node{ and};}
\quad
\tikz[scale=0.5]{\draw(-2,1) node{$\be_n=$};
\coordinate[label=below:$x_1$] (1) at (0,0);
\coordinate[label=above:$x_2$] (2) at (1,2);
\coordinate[label=below:$x_3$] (3) at (2,0);
\coordinate[label=above:$x_4$] (4) at (3,2);
\coordinate[label=below:$x_{2n-1}$] (5) at (5,0);
\coordinate[label=above:$x_{2n}$] (6) at (6,2);
\coordinate[label=below:$x_{1}$] (7) at (7,0);
\coordinate[label=above:$x_{2n}$] (8) at (-1,2);
\draw (4,1)node{$\dots$};
\draw (1) node {$\bullet$};
\draw (2) node {$\bullet$};
\draw (3) node {$\bullet$};
\draw (4) node {$\bullet$};
\draw (5) node {$\bullet$};
\draw (6) node {$\bullet$};
\draw (7) node {$\bullet$};
\draw (8) node {$\bullet$};
\draw (4)--(3.2,1.3);
\draw (4.8,0.7)--(5);
\draw (8)--(1);
\draw [double] (1)--(2);
\draw (2)--(3)--(4);
\draw (5)--(6)--(7);
\filldraw (7.4,1)circle (0.5pt);}}

\noindent The pairs $(\al_n,\be_n)$ for $n\geq 2$ are obviously elementary pairs, and we shall designate by $D_n$ the 2-content of $\al_n$ (or of $\be_n$). Thus, $D_n$ is constituted by
\[\{(x_{2i-1},x_{2i}),\,(x_{2i+1},x_{2i})\mid 1\leq i<n\}\cup\{(x_1,x_{2n}),\,(x_{2n-1},x_{2n})\}\]
together with $\{(x_i,x_i)\mid 1\leq i\leq 2n\}$.
Since all vertices of both $\al_n$ and $\be_n$ have degree at most 2, there are unique words $u_n$ and $v_n$ such that $\De(u_n)=\al_n$ and $\De(v_n)=\be_n$. The main result of section 4 of \cite{l4} states that the set of all identities $u_n\approx v_n$, $n\geq 2$, is a basis of identities for the variety of all \spseudo s.

\begin{prop}
The set $\{u_n\approx v_n\mid n\geq 2\}$ is a basis of identities for the variety of all \spseudo s, or equivalently, the set $\{(\al_n,\be_n)\mid n\geq 2\}$ generates $\rho_{\bf SPS}$ as a fully invariant congruence for $X$ a countably infinite set.
\end{prop}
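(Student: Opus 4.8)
Write $\theta$ for the fully invariant congruence on $\A(X)$ generated by $\{(\al_n,\be_n)\mid n\ge2\}$, with $X$ countably infinite; the claim is that $\theta=\rho_{\bf SPS}$. The inclusion $\theta\subseteq\rho_{\bf SPS}$ is the easy half. Each $(\al_n,\be_n)$ is an elementary pair, so $\al_n$ and $\be_n$ have the same leftmost letter $x_1$, the same rightmost letter $x_2$, and the same $2$-content: the extra edge of $\be_n$ contributes only the pair $(x_1,x_{2n})$, which already belongs to $D_n$. By the description of $\rho_{\bf SPS}$ recalled above, namely $(\al,\be)\in\rho_{\bf SPS}$ exactly when $\co_2(\al)=\co_2(\be)$ and $\al,\be$ lie in a common maximal subsemilattice of $\A(X)$, every generating pair lies in $\rho_{\bf SPS}$; since $\rho_{\bf SPS}$ is itself a fully invariant congruence, $\theta\subseteq\rho_{\bf SPS}$.

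For the reverse inclusion I would first reduce to elementary pairs. A trivial pair lies in $\theta$ by reflexivity, so only the non-trivial pairs of $\rho_{\bf SPS}$ need attention. By Proposition~\ref{EquivEl} each such pair is equivalent to a finite set of elementary pairs, and since $\theta$ is a fully invariant congruence, a pair belongs to $\theta$ once every member of an equivalent set of elementary pairs does. Thus $\rho_{\bf SPS}\subseteq\theta$ follows as soon as every elementary pair is shown to be a consequence of $\{(\al_n,\be_n)\}$.

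So let $(\al,\be)$ be an elementary pair. Using Proposition~\ref{DistVert} I would normalise the roots so that $\be$ is $\al$ with a single pendant right vertex $c$ of label $z$ attached to the left root $\lv_\al$; the case $z=\l(\al)$ cannot occur, since the resulting thorn at $c$ would be non-essential while $\al,\be$ are reduced. From $\co_2(\be)=\co_2(\al)$ one gets $(\l(\al),z)\in\co_2(\al)$, so $\al$ contains an edge $(a,b)$ with $\cb_a=\l(\al)$ and $\cb_b=z$. Tracing the unique path in the tree $\al$ from $\lv_\al$ to $a$ yields a zigzag of even length $2n$ whose two ends carry the label $\l(\al)$ and whose terminal edge realises the pair $(\l(\al),z)$; this is precisely the backbone of the skeleton of $\al_n$, the repeated letter $x_1$ of $\al_n$ matching the label $\l(\al)$ at both ends. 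The idea is then to realise $(\al,\be)$ as a consequence of $(\al_n,\be_n)$ by substituting, via an endomorphism $\varphi$, the labels along the path for the template letters $x_1,\dots,x_{2n}$, and by reconstructing the branches of $\al$ hanging off the backbone through the expansions $\Ls(\cdot,\varphi)$ and $\Rs(\cdot,\varphi)$ described before Corollary~\ref{deltafullyinvariant}, the extra edge of $\be_n$ being carried onto the pendant edge $(\lv_\al,c)$. The shortest paths, where $n$ would drop below $2$, are handled directly using the base pair $(\al_2,\be_2)$ with a collapsing substitution.

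\textbf{The main obstacle} lies in this last step. A single endomorphic image will not suffice in general: the letter $x_1$ labels both ends of the backbone of $\al_n$, so any substitution forces the branches of $\al$ hanging at $\lv_\al$ and at $a$ to coincide, which need not hold. The honest route is therefore to induct on the number of vertices of $\al$, peeling off one pendant branch at a time: each peeling is justified by an instance of a template pair placed in a context, using compatibility of $\theta$ with $\wedge$, and reduces $(\al,\be)$ to an elementary configuration on a smaller tree, the bare zigzag being the base case treated by the direct substitution above. Throughout, the delicate point is to control the edge-foldings and thorn-deletions computing the reduced forms, choosing the substitutions and contexts so that the two graphs collapse onto $\al$ and $\be$ without spurious identifications and continue to differ by exactly the single pendant edge, so that neither the $2$-content nor the ambient maximal subsemilattice is disturbed.
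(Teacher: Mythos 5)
Your easy inclusion and the reduction of $\rho_{\bf SPS}$ to elementary pairs via Proposition \ref{EquivEl} are fine, but the core of the hard direction --- that every elementary pair is a consequence of $\{(\al_n,\be_n)\mid n\ge 2\}$ --- is never actually established. You identify the main obstacle yourself (the letter $x_1$ labels both ends of the backbone of $\al_n$, so any endomorphism $\varphi$ forces the branches ${}^L\De(x_1\varphi)$ hanging at the two ends to coincide) and then replace the proof by a programme: induct on the size of $\al$, "peeling off one pendant branch at a time, each peeling justified by an instance of a template pair placed in a context." But the induction step faces exactly the obstacle you flagged: to recover $(\al,\be)$ from the peeled pair $(\al',\be')$ you must reattach a branch at an \emph{interior} vertex of the zigzag, and the only tools a fully invariant congruence gives you are endomorphisms, which substitute at \emph{all} occurrences of a letter simultaneously, and contexts ($\wedge$ with fixed elements), which attach material only at the two roots. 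Since labels along $\al'$ do repeat (the two ends of the backbone both carry $\l(\al)$), reattachment by substitution creates spurious copies, and their removal by edge-foldings and thorn deletions is precisely the part you defer ("without spurious identifications"). That control is the mathematical content of the theorem, not a side condition one may assume.

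There is also a concrete error in the set-up. Proposition \ref{DistVert} moves the distinguished edge, but it cannot alter the intrinsic bipartition into left and right vertices, so it cannot normalise every elementary pair to the form "pendant right vertex attached to the left root". The dual configuration (pendant left vertex at the right root) is genuinely different: by Proposition \ref{CompDualPOdd} of this paper, $(\al_{n,1,1}^*,\be_{n,1,1}^*)$ is \emph{not} a consequence of $(\al_n,\be_n)=(\al_{n,1,1},\be_{n,1,1})$, and by Corollary \ref{n_m*_iff} it only follows from strictly longer templates such as $(\al_{n+1},\be_{n+1})$. Hence your matched-length substitution, in which the template's backbone has the same length as the path traced in $\al$, must fail for dual elementary pairs; any correct argument has to reach for a longer template there. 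Finally, note that the paper itself offers no proof of this proposition: it is quoted from \cite{l4}, and the step you left open is exactly the content of Section 4 of that paper. The gap in your proposal is not a routine verification; it is the theorem.
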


We shall end this section by associating another graph to each $\ga\in\B'(X)$. Let $\check{\ga}$ be the graph underlying $\ga$, that is, $\check{\ga}$ is just the graph $\ga$ but now with no labels on the vertices and with no distinguished vertices. However, we continue to see $\check{\ga}$ as a bipartite graph with `left' and `right' vertices as in $\ga$, that is, a left vertex of $\ga$ continues to be considered a left vertex in $\check{\ga}$ and the same occurs for the right vertices.

\section{Pairs of $\rho_{\bf SPS}$ with 2-content $D_n$}

Fix $n\geq 2$ for the next two sections. In this section we begin the study of the pairs $(\al,\be)\in\rho_{\bf SPS}$ such that $\co_2(\al)=D_n$, or in other words, we shall begin to investigate the identities $u\approx v$ satisfied by all \spseudo s and whose 2-content is $D_n$ (that is, $\co_2(u)=D_n$). We shall prepare here the ground for the next section where we determine all varieties of \pseudo s defined by identities satisfied by all \spseudo s and whose 2-content is $D_n$.

We first reinforce that we are fixing $n\geq 2$. Note that for $n=1$, the natural definition for $D_1$ would be $D_1=\{(x_1,x_2),\,(x_1,x_1),\,(x_2,x_2)\}$ and the only three graphs of $\A(X)$ with 2-content $D_1$ are \tikz[baseline=1pt,scale=0.3]{\coordinate [label=left:$x_1$] (4) at (0,0.5); \coordinate [label=right:$x_2$] (5) at (2.5,0.5);\draw[double] (4) --(5);\draw (4) node{$\bullet$};\draw (5) node{$\bullet$};},
\tikz[baseline=0pt,scale=0.3]{\coordinate [label=left:$x_1$] (1) at (0,0.5);\coordinate [label=right:$x_2$] (2) at (2,1);
\coordinate[label=right:$x_1$] (3) at (2,0);  \draw (2,1) -- (0,0.5);\draw[double] (0,0.5) --(2,0); \draw (0,0.5) node{$\bullet$}; \draw (2,1) node{$\bullet$}; \draw (2,0) node{$\bullet$};} and \tikz[baseline=0pt,scale=0.3]{\coordinate [label=left:$x_2$] (1) at (0,0);\coordinate [label=left:$x_1$] (2) at (0,1);
\coordinate[label=right:$x_2$] (3) at (2,0.5);  \draw (0,1) -- (2,0.5);\draw[double] (0,0) --(2,0.5); \draw (2,0.5) node{$\bullet$}; \draw (0,1) node{$\bullet$}; \draw (0,0) node{$\bullet$};}; no non-trivial pair formed by these graphs exists in $\rho_{\bf SPS}$. If $\al\in\A(X)$ has 2-content $D_n$ and has no essential thorn, then the underlying graph $\check{\al}$ is a `zig-zag segment', that is, has one of the following four configurations:

\centerline{
\tikz[baseline=15pt,scale=0.5]{
\coordinate (1) at (0,2);
\coordinate (2) at (1,0);
\coordinate (3) at (2,2);
\coordinate (4) at (3,0);
\coordinate (5) at (5,2);
\coordinate (6) at (6,0);
\coordinate (7) at (7,2);
\coordinate (8) at (8,0);
\draw (4,1)node{$\dots$};
\draw (1) node {$\bullet$};
\draw (2) node {$\bullet$};
\draw (3) node {$\bullet$};
\draw (4) node {$\bullet$};
\draw (5) node {$\bullet$};
\draw (6) node {$\bullet$};
\draw (7) node {$\bullet$};
\draw (8) node {$\bullet$};
\draw (4)--(3.2,0.7);
\draw (4.8,1.3)--(5);
\draw (1)--(2)--(3)--(4);
\draw (5)--(6)--(7)--(8);}
or
\tikz[baseline=15pt,scale=0.5]{
\coordinate (1) at (0,2);
\coordinate (2) at (1,0);
\coordinate (3) at (2,2);
\coordinate (4) at (3,0);
\coordinate (5) at (5,2);
\coordinate (6) at (6,0);
\coordinate (7) at (7,2);
\draw (4,1)node{$\dots$};
\draw (1) node {$\bullet$};
\draw (2) node {$\bullet$};
\draw (3) node {$\bullet$};
\draw (4) node {$\bullet$};
\draw (5) node {$\bullet$};
\draw (6) node {$\bullet$};
\draw (7) node {$\bullet$};
\draw (4)--(3.2,0.7);
\draw (4.8,1.3)--(5);
\draw (1)--(2);
\draw (2)--(3);
\draw (3)--(4);
\draw (5)--(6)--(7);}
}
{\parindent=0pt if it begins with a right vertex}, and

\centerline{
\tikz[baseline=15pt,scale=0.5]{
\coordinate (1) at (0,0);
\coordinate (2) at (1,2);
\coordinate (3) at (2,0);
\coordinate (4) at (3,2);
\coordinate (5) at (5,0);
\coordinate (6) at (6,2);
\coordinate (7) at (7,0);
\coordinate (8) at (8,2);
\draw (4,1)node{$\dots$};
\draw (1) node {$\bullet$};
\draw (2) node {$\bullet$};
\draw (3) node {$\bullet$};
\draw (4) node {$\bullet$};
\draw (5) node {$\bullet$};
\draw (6) node {$\bullet$};
\draw (7) node {$\bullet$};
\draw (8) node {$\bullet$};
\draw (4)--(3.2,1.3);
\draw (4.8,0.7)--(5);
\draw (1)--(2);
\draw (2)--(3);
\draw (3)--(4);
\draw (5)--(6)--(7)--(8);}
or
\tikz[baseline=15pt,scale=0.5]{
\coordinate (1) at (0,0);
\coordinate (2) at (1,2);
\coordinate (3) at (2,0);
\coordinate (4) at (3,2);
\coordinate (5) at (5,0);
\coordinate (6) at (6,2);
\coordinate (7) at (7,0);
\draw (4,1)node{$\dots$};
\draw (1) node {$\bullet$};
\draw (2) node {$\bullet$};
\draw (3) node {$\bullet$};
\draw (4) node {$\bullet$};
\draw (5) node {$\bullet$};
\draw (6) node {$\bullet$};
\draw (7) node {$\bullet$};
\draw (4)--(3.2,1.3);
\draw (4.8,0.7)--(5);
\draw (1)--(2);
\draw (2)--(3);
\draw (3)--(4);
\draw (5)--(6)--(7);}
}
{\parindent=0pt  if it begins} with a left vertex (the first graph of each row are in fact isomorphic). Moreover, $\al$ becomes completely determined once we identify in $\check{\al}$ the distinguished vertices and their labels. In other words, once we know which vertices are the distinguished vertices and we know their labels, the labels of all other vertices of $\al$ become fixed and easily determined. Furthermore, since all vertices of $\al$ have degree at most 2, there exists a unique word $u\in F_2(X)$ such that $\De(u)=\al$.

\begin{lem}\label{uniqueword}
Let $\al\in\A(X)$ be such that $\co_2(\al)=D_n$. If $\al$ has no essential thorn, then there exists a unique word $u\in F_2(X)$ such that $\De(u)=\al$. If $a$ and $b$ are two distinct and non-connected vertices of $\al$ with the same label, then the geodesic path from $a$ to $b$ has either $2nk+2$ vertices if $a$ or $b$ belong to some essential thorn of $\al$, or $2nk+1$ otherwise, for some $k\geq 1$.
\end{lem}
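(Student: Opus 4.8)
The plan rests on recognizing the combinatorial nature of $D_n$: its off-diagonal part is exactly the edge set of a single cycle $C_{2n}$ on the vertex set $\{x_1,\dots,x_{2n}\}$, namely $x_1-x_2-x_3-\cdots-x_{2n}-x_1$, with the odd-indexed letters forming one part and the even-indexed letters the other. Indeed, the pairs $(x_{2i-1},x_{2i})$ and $(x_{2i+1},x_{2i})$ say that $x_{2i}$ is cycle-adjacent to $x_{2i-1}$ and $x_{2i+1}$, while $(x_1,x_{2n}),(x_{2n-1},x_{2n})$ close the cycle. Thus every edge of a graph $\al$ with $\co_2(\al)=D_n$ whose two endpoints carry \emph{distinct} labels must realize one of these cycle-edges, whereas an edge whose endpoints carry equal labels contributes only a diagonal pair and is therefore, in a reduced graph, a thorn.

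For the first assertion I would argue as follows. Since $\al$ is reduced, its only thorns are essential (non-essential ones are deleted by rule (i)); so under the hypothesis that $\al$ has no essential thorn it has no thorn at all, hence no edge with equal endpoint-labels, hence every edge is a cycle-edge of $C_{2n}$. Fix a vertex $a$ with label $x_j$. Any neighbour of $a$ carries a label cycle-adjacent to $x_j$, of which there are exactly two (this uses $2n\ge4$), and by reducedness (no edge-folding) no two neighbours of $a$ share a label; so $a$ has degree at most $2$. As $\al$ is a tree, this forces $\al$ to be a path, the zig-zag segment already described before the statement, and the uniqueness of a word $u$ with $\De(u)=\al$ is then exactly the general fact recalled in Section 2, that a graph of $\B(X)$ all of whose vertices have degree at most two equals $\De(u)$ for a unique $u$.

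For the second assertion the first task is to pin down the global shape of an arbitrary reduced $\al$ with $\co_2(\al)=D_n$. Writing $\wh\al$ for $\al$ with its essential thorns removed, the argument above shows $\wh\al$ is a path $P$ whose label sequence is a non-backtracking walk around $C_{2n}$: at each interior vertex reducedness forbids the two neighbours from carrying equal labels, so the walk must always step to the \emph{other} cycle-neighbour, running monotonically around $C_{2n}$; hence it is periodic of period $2n$ with all $2n$ labels distinct within a period. Consequently two vertices of $P$ carry the same label if and only if their distance along $P$ is a positive multiple of $2n$. Next I would show that $\al$ carries at most one essential thorn: an essential thorn vertex is a degree-one distinguished vertex whose unique neighbour shares its label, and since $\lv_\al$ and $\rv_\al$ are adjacent, such a vertex is one of $\lv_\al,\rv_\al$ dangling from the other root; two of them simultaneously would leave $\al$ with only two vertices, contradicting $|\co(\al)|=2n$. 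Thus $\al$ is the path $P$ together with at most one extra leaf, attached by an equal-label edge to a root lying on $P$.

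Finally I would carry out the count. Let $a\neq b$ be non-adjacent vertices of $\al$ with the same label (non-adjacency excludes exactly the two endpoints of the thorn edge, the only equal-label adjacent pair). If neither lies on a thorn, both lie on $P$, their $P$-distance is $2nk$ for some $k\ge1$, and the geodesic has $2nk+1$ vertices. If one of them, say $a$, is the thorn vertex, its neighbour is a root $r$ on $P$ with $\cb_r=\cb_a=\cb_b$, so $b\neq r$ lies on $P$ at distance $2nk$ (with $k\ge1$) from $r$; the unique path from $a$ through $r$ to $b$ then has $2nk+2$ vertices. Both $a$ and $b$ lying on thorns is impossible, since there is at most one thorn. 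This gives the stated dichotomy. The only genuinely delicate points are the structural ones, that reducedness forces the monotone walk around $C_{2n}$ and that at most one essential thorn can occur; the remainder is a direct vertex count.
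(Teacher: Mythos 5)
Your proposal is correct and follows essentially the same route as the paper: both arguments rest on the observation that, by edge-folding-reducedness and the cyclic structure of $D_n$, the labels along any path free of equal-label edges must advance monotonically around the $2n$-cycle $x_1-x_2-\cdots-x_{2n}-x_1$, so that two vertices share a label exactly when their distance is a positive multiple of $2n$, while the essential-thorn case reduces to this by passing to the thorn vertex's unique neighbour, which carries the same label. The paper phrases this as a local recursion fixing the labels along the geodesic (and invokes the zig-zag structure observed before the lemma for the uniqueness of $u$), whereas you package it as a global normal form (path plus at most one thorn leaf), but the underlying content is identical.
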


\begin{proof}
The first part has been observed already above and follows from the fact that all vertices of $\al$ have degree at most two. For the second part, assume first that $a$ and $b$ do not belong to some essential thorn. Let $a=a_0,a_1,\cdots, a_m=b$ be the sequence of vertices in the geodesic path from $a$ to $b$ (we are assuming that two consecutive vertices are connected by an edge). Let $\cb_{a_0}=x_i$. Then either $\cb_{a_1}=x_{i+1}$ (or $\cb_{a_1}=x_1$ if $i=2n$), or $\cb_{a_1}=x_{i-1}$ (or $\cb_{a_1}=x_{2n}$ if $i=1$). We shall assume that $\cb_{a_1}=x_{i+1}$ and prove only this case since the other one is similar. Now, the labels of all other vertices $a_j$ are fixed recursively by the rules: (i) if $\cb_{a_j}=x_{2n}$ then $\cb_{a_{j+1}}=x_1$, and (ii) if $\cb_{a_j}=x_l$ with $l\neq 2n$ then $\cb_{a_{j+1}}=x_{l+1}$. We can now easily observe that $\cb_{a_j}=\cb_{a_0}$ \iff\ $j=2nk$ for some $k\geq 0$. In particular $m=2nk$ for some $k\geq 1$ and there are $2nk+1$ vertices in the geodesic path from $a=a_0$ to $b=a_m$. 

Finally, assume that $a$ belongs to some essential thorn. Then $a$ is a distinguished vertex of $\al$ with degree 1 and if $c$ is the other distinguished vertex of $\al$ then $c$ and $a$ have the same label. Thus $c$ and $b$ are two vertices of $\al$ with the same label and not belonging to some essential thorn, whence there are $2nk+1$ vertices in the geodesic path from $c$ to $b$ for some $k\geq 1$. It is clear now that there are $2nk+2$ vertices in the geodesic path from $a$ to $b$.
\end{proof}

Let $(\al,\be)$ be an elementary pair with $\co_2(\al)=D_n$. Then both $\al$ and $\be$ have no essential thorns and their underlying graphs have each one of the four configurations depicted above. In fact, since $(\al,\be)$ is an elementary pair, we can assume that the distinguished vertices of $\al$ are the two first vertices (going from the left to the right); and then $\be$ is obtained from $\al$ by adding a vertex $a$ on the left side and connecting it by an edge to the leftmost vertex of $\al$ (if the leftmost vertex of $\al$ is a left vertex, then $a$ is a right vertex; otherwise $a$ is a left vertex). 

Let $\l(\al)=x_i$ ($i$ odd) and consider the permutation 
\[\sigma=\left(\begin{array}{cccccccc}
x_1 & x_2&\cdots &x_{i-1}&x_i&x_{i+1}&\cdots &x_{2n}\\
x_{2n+2-i}&x_{2n+3-i}&\cdots &x_{2n}&x_1&x_2&\cdots &x_{2n+1-i}
\end{array}\right)\, .\]
Extend $\sigma$ to an automorphism $\varphi$ of $F_2(X)$ by setting $x\varphi=x$ for any $x\not\in\{x_1,\cdots, x_{2n}\}$. Then, for each $\ga\in\A(X)$, $\ga\ol{\varphi}$ is just the graph $\ga$ but with each label $x_i$ changed to $\sigma(x_i)$. Hence $(\al\ol{\varphi},\be\ol{\varphi})$ is another elementary pair equivalent to $(\al,\be)$ and with $\co_2(\al\ol{\varphi})=D_n$. Further $\l(\al\ol{\varphi})=x_1$ and so $\r(\al\ol{\varphi})=x_2$ or $\r(\al\ol{\varphi})=x_{2n}$. If $\r(\al\ol{\varphi})=x_{2n}$ then consider the automorphism 
\[x\psi=\left\{\begin{array}{cl} x_{2n+2-i} & \mbox{ if } x=x_i \mbox{ and } 1<i\leq 2n\\ [.2cm] 
x & \mbox{ otherwise}
\end{array}\right.\]
of $F_2(X)$ and observe that $((\al)\ol{\varphi}\circ\ol{\psi},(\be)\ol{\varphi}\circ\ol{\psi})$ is once more an elementary pair equivalent to $(\al,\be)$, with $\co_2((\al)\ol{\varphi}\circ \ol{\psi})=D_n$, but now with distinguished vertices labeled by $x_1$ and $x_2$. 

Summing up the conclusions of the previous paragraph, we can assume that the labels of the distinguished vertices of the graphs of an elementary pair $(\al,\be)$ with $\co_2(\al)=D_n$ are always $x_1$ and $x_2$, and so $(\al,\be)$ becomes completely determined once we know the number of vertices of $\al$ and if the vertex in $\be\setminus\al$ is a left vertex or a right vertex. Assume that $\al$ has $m$ vertices and write $m=2nk+i$ for $k\geq 1$ and $1\leq i\leq 2n$ (there is obviously only one choice for $k$ and $i$). Assume further that the vertex in $\be\setminus\al$ is a right vertex and let

\centerline{
$\la_i=$\tikz[baseline=15pt,scale=0.5]{
\coordinate[label=below:$x_1$] (1) at (0,0);
\coordinate[label=above:$x_2$] (2) at (1,2);
\coordinate[label=below:$x_3$] (3) at (2,0);
\coordinate[label=above:$x_4$] (4) at (3,2);
\coordinate[label=below:$x_{i-1}$] (5) at (5,0);
\coordinate[label=above:$x_i$] (6) at (6,2);
\draw (4,1)node{$\dots$};
\draw (1) node {$\bullet$};
\draw (2) node {$\bullet$};
\draw (3) node {$\bullet$};
\draw (4) node {$\bullet$};
\draw (5) node {$\bullet$};
\draw (6) node {$\bullet$};
\draw (4)--(3.2,1.3);
\draw (4.8,0.7)--(5);
\draw (1)--(2);
\draw (2)--(3);
\draw (3)--(4);
\draw (5)--(6);}
\hspace*{.5cm} or \hspace*{.5cm}
$\la_i=$\tikz[baseline=15pt,scale=0.5]{
\coordinate[label=below:$x_1$] (1) at (0,0);
\coordinate[label=above:$x_2$] (2) at (1,2);
\coordinate[label=below:$x_3$] (3) at (2,0);
\coordinate[label=above:$x_4$] (4) at (3,2);
\coordinate[label=below:$x_{i-2}$] (5) at (5,0);
\coordinate[label=above:$x_{i-1}$] (6) at (6,2);
\coordinate[label=below:$x_i$] (7) at (7,0);
\draw (4,1)node{$\dots$};
\draw (1) node {$\bullet$};
\draw (2) node {$\bullet$};
\draw (3) node {$\bullet$};
\draw (4) node {$\bullet$};
\draw (5) node {$\bullet$};
\draw (6) node {$\bullet$};
\draw (7) node {$\bullet$};
\draw (4)--(3.2,1.3);
\draw (4.8,0.7)--(5);
\draw (1)--(2);
\draw (2)--(3);
\draw (3)--(4);
\draw (5)--(6)--(7);}
}

\noindent accordingly to $i$ being even or odd. Then $\al$ is the graph

\centerline{\tikz[scale=0.5]{
\draw (-2,1) node{$\al_{n,k,i}=$};
\coordinate[label=below:$\la_{2n}$] (1) at (1,0);
\coordinate[label=below:$\la_{2n}$] (2) at (5,0);
\coordinate[label=below:$\la_{2n}$] (3) at (10,0);
\coordinate[label=below:$\la_{i}$] (3) at (14,0);
\draw (0,0) node{$\bullet$};
\draw (0.5,2) node{$\bullet$};
\draw (2,0) node{$\bullet$};
\draw (2.5,2) node{$\bullet$};
\draw (4,0) node{$\bullet$};
\draw (4.5,2) node{$\bullet$};
\draw (6,0) node{$\bullet$};
\draw (6.5,2) node{$\bullet$};
\draw (9,0) node{$\bullet$};
\draw (9.5,2) node{$\bullet$};
\draw (11,0) node{$\bullet$};
\draw (11.5,2) node{$\bullet$};
\draw (13,0) node{$\bullet$};
\draw (13.5,2) node{$\bullet$};
\draw[double] (0,0)--(0.5,2);
\draw (2,0)--(2.5,2)--(4,0)--(4.5,2);
\draw (6,0)--(6.5,2)--(7,1);
\draw (7.85,1) node{$\dots$};
\draw (8.5,1)--(9,0)--(9.5,2);
\draw (11,0)--(11.5,2);
\draw (11.5,2)--(13,0)--(13.5,2)--(13.75,1);
\draw (14.6,1) node{$\dots$};
\draw[dotted] (0,0)--(2,0);
\draw[dotted] (0.5,2)--(2.5,2);
\draw[dotted] (4,0)--(6,0);
\draw[dotted] (4.5,2)--(6.5,2);
\draw[dotted] (9,0)--(11,0);
\draw[dotted] (9.5,2)--(11.5,2);}
}

{\parindent=0pt while $\be$ is the graph}

\centerline{\tikz[scale=0.5]{
\draw (-3.5,1) node{$\be_{n,k,i}=$};
\coordinate[label=below:$\la_{2n}$] (1) at (1,0);
\coordinate[label=below:$\la_{2n}$] (2) at (5,0);
\coordinate[label=below:$\la_{2n}$] (3) at (10,0);
\coordinate[label=below:$\la_{i}$] (3) at (14,0);
\coordinate[label=above:$x_{2n}$] (1) at (-1.5,2);
\draw (1) node {$\bullet$};
\draw (0,0) node{$\bullet$};
\draw (0.5,2) node{$\bullet$};
\draw (2,0) node{$\bullet$};
\draw (2.5,2) node{$\bullet$};
\draw (4,0) node{$\bullet$};
\draw (4.5,2) node{$\bullet$};
\draw (6,0) node{$\bullet$};
\draw (6.5,2) node{$\bullet$};
\draw (9,0) node{$\bullet$};
\draw (9.5,2) node{$\bullet$};
\draw (11,0) node{$\bullet$};
\draw (11.5,2) node{$\bullet$};
\draw (13,0) node{$\bullet$};
\draw (13.5,2) node{$\bullet$};
\draw (1)--(0,0);
\draw[double] (0,0)--(0.5,2);
\draw (2,0)--(2.5,2)--(4,0)--(4.5,2);
\draw (6,0)--(6.5,2)--(7,1);
\draw (7.85,1)node{$\dots$};
\draw (8.5,1)--(9,0)--(9.5,2);
\draw (11,0)--(11.5,2);
\draw (11.5,2)--(13,0)--(13.5,2)--(13.75,1);
\draw (14.6,1)node{$\dots$};
\draw[dotted] (0,0)--(2,0);
\draw[dotted] (0.5,2)--(2.5,2);
\draw[dotted] (4,0)--(6,0);
\draw[dotted] (4.5,2)--(6.5,2);
\draw[dotted] (9,0)--(11,0);
\draw[dotted] (9.5,2)--(11.5,2);}
}

{\parindent=0pt where the segment $\la_{2n}$ occurs $k$ times} in both $\al_{n,k,i}$ and $\be_{n,k,i}$ (if $i=2n$ then the segment $\la_{2n}$ occurs in fact $k+1$ times because the last segment $\la_i$ becomes another copy of $\la_{2n}$). 

Let us assume now that the vertex in $\be\setminus\al$ is a left vertex. To highlight the dual nature of this case, we need to introduce some dual concepts. Let $D_n^*=\{(x_j,x_i)\mid (x_i,x_j)\in D_n\}$ and

\centerline{
$\la_i^*=$\tikz[baseline=15pt,scale=0.5]{
\coordinate[label=above:$x_1$] (1) at (0,2);
\coordinate[label=below:$x_2$] (2) at (1,0);
\coordinate[label=above:$x_3$] (3) at (2,2);
\coordinate[label=below:$x_4$] (4) at (3,0);
\coordinate[label=above:$x_{i-1}$] (5) at (5,2);
\coordinate[label=below:$x_i$] (6) at (6,0);
\draw (4,1)node{$\dots$};
\draw (1) node {$\bullet$};
\draw (2) node {$\bullet$};
\draw (3) node {$\bullet$};
\draw (4) node {$\bullet$};
\draw (5) node {$\bullet$};
\draw (6) node {$\bullet$};
\draw (4)--(3.2,0.7);
\draw (4.8,1.3)--(5);
\draw (1)--(2);
\draw (2)--(3);
\draw (3)--(4);
\draw (5)--(6);}
\hspace*{.5cm} or \hspace*{.5cm}
$\la_i^*=$\tikz[baseline=15pt,scale=0.5]{
\coordinate[label=above:$x_1$] (1) at (0,2);
\coordinate[label=below:$x_2$] (2) at (1,0);
\coordinate[label=above:$x_3$] (3) at (2,2);
\coordinate[label=below:$x_4$] (4) at (3,0);
\coordinate[label=above:$x_{i-2}$] (5) at (5,2);
\coordinate[label=below:$x_{i-1}$] (6) at (6,0);
\coordinate[label=above:$x_i$] (7) at (7,2);
\draw (4,1)node{$\dots$};
\draw (1) node {$\bullet$};
\draw (2) node {$\bullet$};
\draw (3) node {$\bullet$};
\draw (4) node {$\bullet$};
\draw (5) node {$\bullet$};
\draw (6) node {$\bullet$};
\draw (7) node {$\bullet$};
\draw (4)--(3.2,0.7);
\draw (4.8,1.3)--(5);
\draw (1)--(2);
\draw (2)--(3);
\draw (3)--(4);
\draw (5)--(6)--(7);}
}

{\parindent=0pt accordingly to $i$ being even or odd.} Consider the following permutation
\[\tau=\left(\begin{array}{cccccc}
x_1 & x_2&x_3&x_4&\cdots &x_{2n}\\
x_2&x_1&x_{2n}&x_{2n-1}&\cdots &x_3
\end{array}\right)\, ,\]
and extend it to an automorphism $\psi^*$ of $F_2(X)$ in the obvious way. Then $(\al\ol{\psi^*},\be\ol{\psi^*})$ is another elementary pair equivalent to $(\al,\be)$, with distinguished vertices labeled by $x_1$ and $x_2$, but now with $\co_2(\al\ol{\psi^*})=D_n^*$. Hence $\al\ol{\psi^*}$ is the graph

\centerline{\tikz[scale=0.5]{
\draw (-2,1) node{$\al^*_{n,k,i}=$};
\coordinate[label=below:$\la_{2n}^*$] (1) at (1.5,0);
\coordinate[label=below:$\la_{2n}^*$] (2) at (5.5,0);
\coordinate[label=below:$\la_{2n}^*$] (3) at (10.5,0);
\coordinate[label=below:$\la_{i}^*$] (3) at (14,0);
\draw (0,2) node{$\bullet$};
\draw (0.5,0) node{$\bullet$};
\draw (2,2) node{$\bullet$};
\draw (2.5,0) node{$\bullet$};
\draw (4,2) node{$\bullet$};
\draw (4.5,0) node{$\bullet$};
\draw (6,2) node{$\bullet$};
\draw (6.5,0) node{$\bullet$};
\draw (9,2) node{$\bullet$};
\draw (9.5,0) node{$\bullet$};
\draw (11,2) node{$\bullet$};
\draw (11.5,0) node{$\bullet$};
\draw (13,2) node{$\bullet$};
\draw (13.5,0) node{$\bullet$};
\draw[double] (0,2)--(0.5,0);
\draw (2,2)--(2.5,0)--(4,2)--(4.5,0);
\draw (6,2)--(6.5,0)--(7,1);
\draw (7.85,1) node{$\dots$};
\draw (8.5,1)--(9,2)--(9.5,0);
\draw (11,2)--(11.5,0);
\draw (11.5,0)--(13,2)--(13.5,0)--(13.75,1);
\draw (14.6,1) node{$\dots$};
\draw[dotted] (0,2)--(2,2);
\draw[dotted] (0.5,0)--(2.5,0);
\draw[dotted] (4,2)--(6,2);
\draw[dotted] (4.5,0)--(6.5,0);
\draw[dotted] (9,2)--(11,2);
\draw[dotted] (9.5,0)--(11.5,0);}
}

\noindent while $\be\ol{\psi^*}$ is the graph

\centerline{\tikz[scale=0.5]{
\draw (-3.5,1) node{$\be^*_{n,k,i}=$};
\coordinate[label=below:$\la_{2n}^*$] (1) at (1.5,0);
\coordinate[label=below:$\la_{2n}^*$] (2) at (5.5,0);
\coordinate[label=below:$\la_{2n}^*$] (3) at (10.5,0);
\coordinate[label=below:$\la_{i}^*$] (3) at (14,0);
\coordinate[label=below:$x_{2n}$] (1) at (-1.5,0);
\draw (1) node {$\bullet$};
\draw (0,2) node{$\bullet$};
\draw (0.5,0) node{$\bullet$};
\draw (2,2) node{$\bullet$};
\draw (2.5,0) node{$\bullet$};
\draw (4,2) node{$\bullet$};
\draw (4.5,0) node{$\bullet$};
\draw (6,2) node{$\bullet$};
\draw (6.5,0) node{$\bullet$};
\draw (9,2) node{$\bullet$};
\draw (9.5,0) node{$\bullet$};
\draw (11,2) node{$\bullet$};
\draw (11.5,0) node{$\bullet$};
\draw (13,2) node{$\bullet$};
\draw (13.5,0) node{$\bullet$};
\draw (1)--(0,2);
\draw[double] (0,2)--(0.5,0);
\draw (2,2)--(2.5,0)--(4,2)--(4.5,0);
\draw (6,2)--(6.5,0)--(7,1);
\draw (7.85,1)node{$\dots$};
\draw (8.5,1)--(9,2)--(9.5,0);
\draw (11,2)--(11.5,0);
\draw (11.5,0)--(13,2)--(13.5,0)--(13.75,1);
\draw (14.6,1)node{$\dots$};
\draw[dotted] (0,2)--(2,2);
\draw[dotted] (0.5,0)--(2.5,0);
\draw[dotted] (4,2)--(6,2);
\draw[dotted] (4.5,0)--(6.5,0);
\draw[dotted] (9,2)--(11,2);
\draw[dotted] (9.5,0)--(11.5,0);}
}

\noindent where the segment $\la_{2n}^*$ occurs $k$ times in both $\al^*_{n,k,i}$ and $\be^*_{n,k,i}$. 

We gather the previous conclusions in the following proposition. 

\begin{prop}\label{FinSubIn'}
Let $(\al,\be)$ be an elementary pair with $\co_2(\al)=D_n$ and such that $\al$ has $2nk+i$ vertices. Then $(\al,\be)$ is equivalent to $\nki$ if the vertex of $\be\setminus\al$ is a right vertex or to $\nkic$ otherwise.
\end{prop}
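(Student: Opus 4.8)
The plan is to organise into a single statement the normalisation argument that was carried out informally in the paragraphs immediately preceding the proposition. Starting from an arbitrary elementary pair $(\al,\be)$ with $\co_2(\al)=D_n$ and $2nk+i$ vertices in $\al$, I would first record, via Lemma \ref{uniqueword} and the observation that a graph with 2-content $D_n$ and no essential thorn is a zig-zag segment, that every label of $\al$ (and hence of $\be$) is forced once we fix the positions and labels of the two distinguished vertices. This reduces the entire classification to controlling just two pieces of data: the labels of the distinguished pair, and whether the extra degree-one vertex of $\be\setminus\al$ is a left vertex or a right vertex.

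The mechanism I would isolate first is that relabelling by an automorphism yields an \emph{equivalent} pair. If $\varphi$ is an automorphism of $F_2(X)$, then $\ol{\varphi}$ is an automorphism of $\A(X)$, with inverse $\ol{\varphi^{-1}}$; since the fully invariant congruence generated by a relation is closed under every endomorphism of $\A(X)$, the pair $(\al\ol{\varphi},\be\ol{\varphi})$ is a consequence of $(\al,\be)$, and applying $\ol{\varphi^{-1}}$ gives the reverse consequence, so the two pairs are equivalent. Using Proposition \ref{DistVert} I would then move the distinguished pair to the leftmost edge of the zig-zag, so that the extra vertex of $\be$ is attached at the far left. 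Writing $\l(\al)=x_i$ with $i$ odd, the cyclic automorphism $\varphi$ induced by $\sigma$ relabels the leftmost vertex by $x_1$; if this forces $\r(\al\ol{\varphi})=x_{2n}$, the further automorphism $\ol{\psi}$ returns the second distinguished vertex to $x_2$. After these steps the distinguished vertices carry the labels $x_1,x_2$, and by the label-forcing observation the whole graph is determined by its vertex count $2nk+i$.

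It then remains to match the normalised graph against the explicitly defined ones. If the extra vertex is a right vertex, the zig-zag consisting of $k$ copies of $\la_{2n}$ followed by $\la_i$ is exactly $\anki$, and adjoining the left-hand $x_{2n}$-vertex gives $\bnki$, so $(\al,\be)$ is equivalent to $\nki$. If instead the extra vertex is a left vertex, I would apply the involutive automorphism $\ol{\psi^*}$ induced by $\tau$: this interchanges the roles of left and right, carries the 2-content to $D_n^*$ while keeping the distinguished labels $x_1,x_2$, and identifies the pair with $\anki^*,\bnki^*$, that is, with $\nkic$. The only point requiring genuine care — and which I expect to be a bookkeeping rather than a conceptual obstacle — is checking that the count $2nk+i$ translates correctly into ``$k$ copies of $\la_{2n}$ then $\la_i$'', and that the parity of $i$ together with the left/right type of the extra vertex is consistent with the pictures defining $\anki,\bnki,\anki^*,\bnki^*$; this is precisely the even-versus-odd case split (with the special behaviour at $i=2n$, where the final $\la_i$ coincides with a further copy of $\la_{2n}$) built into the definitions of $\la_i$ and $\la_i^*$.
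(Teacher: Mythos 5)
Your proposal is correct and takes essentially the same route as the paper: the paper's proof of this proposition is exactly the normalisation discussion preceding it (the zig-zag structure forcing all labels, the relabelling automorphisms induced by $\sigma$, $\psi$ and $\tau$, and the case split on the left/right type of the extra vertex), which you reproduce faithfully. Your only deviations are cosmetic --- spelling out why automorphisms yield \emph{equivalent} pairs (a point the paper takes for granted) is a welcome addition, while the appeal to Proposition \ref{DistVert} is redundant, since for an elementary pair the distinguished vertices are automatically the first two vertices of the path at the end where the degree-one vertex of $\be\setminus\al$ attaches.
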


Our next result states that no two pairs $\nki$ and $(\al_{n,l,j},\be_{n,l,j})$ are incomparable, that is, one of them is always a consequence of the other.

\begin{prop}\label{nki_nlj}
Let $k,l\geq 1$ and $i,j\in\{1,\cdots,2n\}$. If $2nk+i\geq 2nl+j$, then $\nki$ is a consequence of $(\al_{n,l,j},\be_{n,l,j})$ and $\nkic$ is a consequence of $(\al_{n,l,j}^*,\be_{n,l,j}^*)$. 
\end{prop}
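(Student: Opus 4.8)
The plan is to prove the statement by induction on the number of vertices $2nk+i$, reducing everything to a single \emph{incremental step}: that $\nki$, a pair with $2nk+i$ vertices, is always a consequence of the analogous pair with one fewer vertex. Since ``being a consequence of'' is transitive (the fully invariant congruence generated by the shorter pair contains the longer one, hence contains the entire congruence the longer one generates), iterating this step yields the claim for all $2nk+i\geq 2nl+j$. The passage from index $(k,2n)$ to $(k+1,1)$ needs no separate treatment: the labels along the zig--zag are periodic modulo $2n$, so appending one vertex to $\al_{n,k,2n}$ (which has $2n(k+1)$ vertices) produces exactly a graph with $2n(k+1)+1$ vertices, namely $\al_{n,k+1,1}$.

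For the incremental step I would start from $\nki$ and first use Proposition~\ref{DistVert} to move the two distinguished vertices onto the edge at the far (right) end of the zig--zag, obtaining an equivalent pair $(\al',\be')$. This is legitimate because the extra degree--$1$ vertex of $\be$ lies at the opposite (left) end and carries a label ($x_{2n}$) distinct from its neighbour $x_1$, so it is \emph{not} a thorn and $\be'$ stays reduced wherever the roots sit. Now multiply by the generator $g=\Te(x_{i+1})$: since $\rho_{\bf V}$ is a congruence compatible with $\wedge$, the pair $(g\wedge\al',\,g\wedge\be')$ again lies in the fully invariant congruence generated by $\nki$. By definition $g\wedge\al'=\ol{g\sqcap\al'}$, and in $g\sqcap\al'$ the gluing edge $(\lv_g,\rv_{\al'})$ attaches the vertices of $g$ to the right root $\rv_{\al'}$, whose label is $x_i$; the superfluous second copy of $x_{i+1}$ coming from ${}^Lg$ is a non-essential thorn and is deleted, so after reduction a single new vertex labelled $x_{i+1}$ has been added. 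Because the vertex already adjacent to $\rv_{\al'}$ inside $\al'$ is labelled $x_{i-1}$, which differs from $x_{i+1}$ precisely because $n\geq 2$, no edge--folding occurs at the junction, and as $x_{i+1}\neq x_i$ no new thorn is created. The relevant pair of $D_n$ (one of $(x_i,x_{i+1})$, $(x_{i+1},x_i)$, or $(x_1,x_{2n})$ in the wrap--around case) already belongs to $D_n$, so the $2$--content is automatically preserved.

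It remains to recognise the outcome as the pair with $2nk+i+1$ vertices. The multiplication touches only the far end, so the left--end defect of $\be$ is carried along unchanged; repositioning the roots back to the first two vertices (Proposition~\ref{DistVert}) and then applying the cyclic relabelling automorphism $\ol{\varphi}$ introduced above in this section --- which preserves $D_n$ by Corollary~\ref{deltafullyinvariant} --- brings the pair to standard form. By the classification of elementary pairs with $2$--content $D_n$ recorded in the discussion preceding Proposition~\ref{FinSubIn'}, a graph with $2nk+i+1$ vertices, $2$--content $D_n$, no essential thorn, root labels $x_1,x_2$, and a single left--end defect is uniquely determined, so it is exactly the next pair in the list. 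This establishes the incremental step for $\nki$, and the dual statement for $\nkic$ follows verbatim after conjugating by the left--right duality $\ol{\psi^*}$, which interchanges $D_n$ with $D_n^*$ and each $\la_i$ with $\la_i^*$.

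The main obstacle I expect is the bookkeeping of the reduction $\ol{g\sqcap\al'}$: one must check that no unintended edge--folding or thorn--deletion occurs at the gluing junction and, at the same time, that the left--end defect of $\be'$ neither disappears nor merges with anything, so that $g\wedge\al'$ and $g\wedge\be'$ genuinely yield the $(2nk+i+1)$--vertex pair rather than collapsing to something shorter. Controlling this is exactly where Lemma~\ref{uniqueword} (forcing geodesics between equal labels to have length a multiple of $2n$) and the covering description in Proposition~\ref{relA} are needed, since together they guarantee that the only degree--$1$ vertex of $\be'$ apart from the roots is the intended defect and that lengthening the zig--zag cannot accidentally create a repeated label close enough to fold.
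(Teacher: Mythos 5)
Your reduction of the proposition to a single incremental step (the pair on $2nk+i+1$ vertices is a consequence of the pair on $2nk+i$ vertices, transitivity and periodicity doing the rest) is exactly the paper's reduction, but your realization of that step is genuinely different. The paper applies a substitution endomorphism --- $x_j\varphi=x_j\wedge x_{j+1}$ for $j$ odd, $x_j\varphi=x_{j+1}\wedge x_j$ for $j$ even with $j\neq 2n$, and $x_{2n}\varphi=x_1\wedge x_{2n}$ --- and checks that in $\al_{n,l,j}\ol{\varphi}$ every newly attached pendant vertex folds away by edge-folding except the one at the far end of the path, so that $\al_{n,l,j}\ol{\varphi}=\al_{n,l,j+1}$ and $\be_{n,l,j}\ol{\varphi}=\be_{n,l,j+1}$ outright, with no repositioning of roots; full invariance then gives the step. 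You instead use compatibility of the congruence with $\wedge$ (multiplication by the single generator $\Te(x_{i+1})$) together with two applications of Proposition \ref{DistVert}. Your route only ever creates one new vertex, where the paper must argue that several simultaneous attachments disappear; the price is the root bookkeeping, and that is where your write-up breaks.

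Concretely, your step fails whenever $i$ is odd. In that case the last vertex of $\anki$ is a \emph{left} vertex, so after moving the roots onto the far-right edge it is the left root $\lv_{\al'}$ that carries the label $x_i$, while the right root $\rv_{\al'}$ is the second-to-last vertex, labelled $x_{i-1}$ (indices mod $2n$). Your product $g\wedge\al'$ then glues the new vertex to this $x_{i-1}$-vertex; no edge-folding or thorn-deletion can occur there, since its three neighbours are labelled $x_{i-2}$, $x_i$, $x_{i+1}$, pairwise distinct for $n\geq 2$, and $x_{i+1}\neq x_{i-1}$. The reduced graph therefore has a vertex of degree three and its 2-content contains $(x_{i+1},x_{i-1})$, a pair with both indices even, hence not in $D_n$ at all: it is not $\al_{n,k,i+1}$ and cannot be brought to it by Proposition \ref{DistVert} or relabelling. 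Since your induction alternates parities, this breaks the entire chain, not just half the cases. The fix is the mirror operation: for $i$ odd take $(\al'\wedge g,\,\be'\wedge g)$, whose gluing edge $(\lv_{\al'},\rv_g)$ attaches a new right vertex labelled $x_{i+1}$ to the $x_i$-labelled last vertex, the unwanted copy of $x_{i+1}$ in $g^R$ again being a non-essential thorn. This left/right case split is precisely the parity bookkeeping the paper encodes in its case-defined substitution ($x_j\wedge x_{j+1}$ for $j$ odd versus $x_{j+1}\wedge x_j$ for $j$ even). Two smaller points: your final repositioning must be read as Proposition \ref{DistVert} applied to the elementary pair $(\al_{n,k,i+1},\be_{n,k,i+1})$, since your intermediate pair is itself not elementary (its defect vertex is no longer adjacent to a distinguished vertex); and the closing appeals to Lemma \ref{uniqueword} and Proposition \ref{relA} and to a relabelling automorphism are unnecessary --- the two label comparisons at the junction are all the reduction bookkeeping there is, and the labels come out in standard position automatically.
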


\begin{proof}
We shall prove only the $\nki$ case since the $\nkic$ case follows by symmetry. Further, the $\nki$ case becomes proved once we show both that (i) $(\al_{n,l,j+1},\be_{n,l,j+1})$ is a consequence of $(\al_{n,l,j},\be_{n,l,j})$ for $j<2n$ and that (ii) $(\al_{n,l+1,1},\be_{n,l+1,1})$ is a consequence of $(\al_{n,l,2n},\be_{n,l,2n})$. Let $\varphi$ be the endomorphism of $F_2(X)$ fixing all $x\not\in\{x_1,\cdots, x_{2n}\}$ and such that 
\[x_j\varphi=\left\{\begin{array}{cl}
x_j\wedge x_{j+1}& \mbox{ if } j \mbox{ odd} \\ [.2cm] 
x_1\wedge x_{2n}& \mbox{ if } j=2n \\ [.2cm] 
x_{j+1}\wedge x_j& \mbox{ otherwise.}
\end{array}\right.\]
Then $\al_{n,l,j}\ol{\varphi}$ is obtained from $\al_{n,l,j}$ by first adding, for each vertex $a$ labeled with $x_j$, a new vertex $b$ connected to $a$ by an edge and with label $x_{j+1}$ (or $x_1$ if $j=2n$); and then reducing this last graph. In the reducing process all new vertices $b$ are eliminated by edge-folding except for the last one which is labeled with $x_{j+1}$ (or $x_1$ if $j=2n$). Thus $\al_{n,l,j}\ol{\varphi}=\al_{n,l,j+1}$ for $j\neq 2n$ and $\al_{n,l,2n}\ol{\varphi}=\al_{n,l+1,1}$. Similarly $\be_{n,l,j}\ol{\varphi}= \be_{n,l,j+1}$ for $j\neq 2n$ and $\be_{n,l,2n}\ol{\varphi}= \be_{n,l+1,1}$. We have just proved statements (i) and (ii) as desired.
\end{proof}

In the next two results we show that the two pairs $\nki$ and $\nkic$ are equivalent if $i$ even. We shall see later on the next section that the similar result for $i$ odd does not hold true (we shall prove they are incomparable for $i$ odd).

\begin{lem}\label{CompDualP}
$(\al_{n,k,i+1}^*,\be_{n,k,i+1}^*)$ is a consequence of $\nki$ for each $i<2n$, and $(\al_{n,k+1,1}^*,\be_{n,k+1,1}^*)$ is a consequence of $(\al_{n,k,2n},\be_{n,k,2n})$.
\end{lem}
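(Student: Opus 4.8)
The plan is to prove both statements by the device already used for Proposition~\ref{nki_nlj}: exhibit a single endomorphism $\varphi$ of $F_2(X)$ whose induced endomorphism $\ol\varphi$ of $\A(X)$ carries the pair $\nki$ onto $(\al^*_{n,k,i+1},\be^*_{n,k,i+1})$, and in the boundary case carries $(\al_{n,k,2n},\be_{n,k,2n})$ onto $(\al^*_{n,k+1,1},\be^*_{n,k+1,1})$. Once such a $\varphi$ is produced and the two graph computations $\al_{n,k,i}\ol\varphi=\al^*_{n,k,i+1}$ and $\be_{n,k,i}\ol\varphi=\be^*_{n,k,i+1}$ are verified, the conclusion is immediate: since $\ol\varphi$ is an endomorphism of $\A(X)$ and the fully invariant congruence generated by $(\al_{n,k,i},\be_{n,k,i})$ is closed under all endomorphisms, it contains $(\al_{n,k,i}\ol\varphi,\be_{n,k,i}\ol\varphi)$, which is exactly the desired pair; this is precisely what it means for $(\al^*_{n,k,i+1},\be^*_{n,k,i+1})$ to be a consequence of $\nki$.

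The endomorphism $\varphi$ must achieve two effects simultaneously. First, it must dualize the labelling, turning the $D_n$-pattern of $\al_{n,k,i}$ into the $D_n^*$-pattern of $\al^*_{n,k,i+1}$; this is the role played in the previous section by the relabelling $\psi^*$ attached to the permutation $\tau$, so the root-labels should follow $\tau$, sending the left root $x_1$ to $x_2$ and the right root $x_2$ to $x_1$, and $x_j$ to $x_{2n+3-j}$ for $j\geq 3$. Second, it must lengthen the zig-zag by exactly one vertex while flipping its parity, so that the vertex of $\be^*_{n,k,i+1}\setminus\al^*_{n,k,i+1}$ comes out as a \emph{left} vertex rather than the right vertex of $\be_{n,k,i}\setminus\al_{n,k,i}$. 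Accordingly I would take each $x_j\varphi$ to be a two-letter word (a single $\wedge$) combining the dualized label with its dual-cyclic neighbour, the precise form depending on the parity of $j$ and on the $j=2n$ wrap-around, exactly as the images in the proof of Proposition~\ref{nki_nlj} were chosen so that all but one of the newly created pendant vertices fold away during reduction.

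The verification of $\al_{n,k,i}\ol\varphi=\al^*_{n,k,i+1}$ and $\be_{n,k,i}\ol\varphi=\be^*_{n,k,i+1}$ then proceeds as in Proposition~\ref{nki_nlj}: using the description of $\De(u\varphi)$ and of $\ol\varphi$ given before Corollary~\ref{deltafullyinvariant}, one first replaces each vertex $a$ by the rooted tree $\Ls(a,\varphi)$ or $\Rs(a,\varphi)$ (here a single pendant edge carrying the dual neighbour-label) and then reduces. In the reduction every newly created pendant vertex is absorbed by an edge-folding except one, which survives and accounts for the single extra vertex; tracking which survivor remains, and on which side, is what produces the length increase by one together with the orientation flip. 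The action near the two distinguished vertices, computed from the substitution rules ${}^L\De(\cb_a\varphi)$ at the left root and $\De(\cb_b\varphi)^R$ at the right root, must be checked to deliver the dual root-labels $x_2$ (left) and $x_1$ (right).

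The main obstacle is exactly this orientation and parity bookkeeping: unlike the pure lengthening of Proposition~\ref{nki_nlj}, here the surviving pendant vertex must land adjacent to a \emph{distinguished} vertex and on the opposite (left) side, so one must verify that the substitutions at the left root of $\al_{n,k,i}$ and at the extra right vertex $x_{2n}$ of $\be_{n,k,i}$ cooperate to convert the ``extra right vertex on the left-root side'' into the ``extra left vertex on the right-root side'' demanded by $\be^*_{n,k,i+1}$. The wrap-around statement ($i=2n$) needs a separate but entirely analogous check, since there the trailing segment $\la_{2n}$ gains an additional copy under $\ol\varphi$, and the definition of $x_{2n}\varphi$ (via an image of $x_1\wedge x_{2n}$-type, as in Proposition~\ref{nki_nlj}) is what makes the vertex counts match. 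Should the explicit $\varphi$ prove awkward to verify in one step, an alternative is to factor it as a parity-flipping lengthening into a $D_n$-graph carrying an extra left vertex, followed by the label-only dualization $\ol{\psi^*}$, and then invoke transitivity of the consequence relation; Proposition~\ref{DistVert} may also be used to move the distinguished vertices so that the two computations align on the nose.
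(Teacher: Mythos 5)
Your transport principle is sound (an endomorphism $\ol{\varphi}$ of $\A(X)$ pushes the fully invariant congruence generated by $\nki$ onto $(\anki\ol{\varphi},\bnki\ol{\varphi})$), and your intuition about ``lengthen by one and flip parity'' is the right picture. But the core of your plan --- producing a single $\varphi$ with $\anki\ol{\varphi}=\al_{n,k,i+1}^*$ and $\bnki\ol{\varphi}=\be_{n,k,i+1}^*$ \emph{on the nose} --- is impossible, for any choice of $\varphi$. The obstruction is structural: the right root of $\al_{n,k,i+1}^*$ is a degree-one vertex (the endpoint $b$ of the zig-zag, adjacent only to the left root), whereas the right root of $\Te(\unki\varphi)$ is the reduction-image of the second vertex of the skeleton $\sk(\unki,\varphi)$, which has two distinct skeleton neighbours (the vertices substituted for $a_1$ and $a_3$). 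If the image pair is non-trivial, then by the dichotomy the paper proves as Lemma \ref{unkivarphi} (whose hypothesis holds here, since $\co_2(\unki\varphi)=\co_2(\Te(\unki\varphi))=D_n^*$) the skeleton lies in $\A(X)$, and --- as in the proof of Lemma \ref{unki*iso}, using that an edge-folding-reduced subtree of a tree embeds injectively into its reduction --- it embeds into $\Te(\unki\varphi)$ preserving both roots. Hence the right root of $\Te(\unki\varphi)$ has degree at least two, so it can never equal $\al_{n,k,i+1}^*$ unless $\unki\varphi\approx\vnki\varphi$ is trivial, in which case $\Te(\vnki\varphi)\neq\be_{n,k,i+1}^*$. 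So ``both computations on the nose'' cannot be verified; no bookkeeping will fix this.

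This is precisely why the paper takes a detour, and why the two tools you relegate to a closing contingency are in fact essential. The paper first applies Proposition \ref{DistVert} to move the right root of the target pair one edge inward, from $b$ to the vertex $c$, obtaining an equivalent pair $(\al',\be)$; even then its endomorphism (the shift $x_j\varphi=x_{j+1}$ for $j<2n$, $x_{2n}\varphi=x_{2n}\wedge x_1$) does \emph{not} hit $\al'$: it sends $\anki$ to the graph $\al$ obtained from $\al'$ by deleting the degree-one vertex $b$, and sends $\bnki$ to $\be$; the missing vertex is recovered by the partial-order argument ($\be\le\al'\le\al$, Proposition \ref{relA}.(3)), which shows $(\al',\be)$ is a consequence of $(\al,\be)$. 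A minimal repair of your proposal runs parallel to this: apply Proposition \ref{DistVert} first, and then use the endomorphism that doubles \emph{two} letters, $x_1\varphi=x_2\wedge x_1$, $x_{2n}\varphi=x_{2n}\wedge x_1$, $x_j\varphi=x_{j+1}$ otherwise; one checks that this lands exactly on $(\al',\be)$ (the extra pendant created at the left root folds against the image of the added vertex of $\bnki$, producing the extra left vertex labelled $x_{2n}$). Without relocating the roots by Proposition \ref{DistVert}, and without either the order step or the second doubled letter, the proposed verification cannot succeed.
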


\begin{proof}
Note that the second part of this result is the $i=2n$ version of the first part, and its proof follows the same arguments as of the case $i<2n$ with the expected adaptations. Therefore, we shall present here only the proof of the case $i<2n$.

Let $a$ and $b$ be respectively the left and right roots of $\al_{n,k,i+1}^*$ and let $c$ be the other vertex of $\al_{n,k,i+1}^*$ connected to $a$ by an edge. Let $\al'$ and $\be$ be respectively the graphs $\al_{n,k,i+1}^*$ and $\be_{n,k,i+1}^*$ but with the right root changed from $b$ to $c$. Then $(\al',\be)$ and $(\al_{n,k,i+1}^*,\be_{n,k,i+1}^*)$ are equivalent by Proposition \ref{DistVert}. Let now $\al\in\A(X)$ be the graph obtained from $\al'$ by deleting the vertex $b$ and the edge $(a,b)$. Then $(\al',\be)$ is a consequence of $(\al,\be)$ since $\be\leq\al'\leq\al$ by Proposition \ref{relA}.(3).

Consider the endomorphism $\varphi$ of $F_2(X)$ fixing all $x\not\in\{x_1,\cdots,x_{2n}\}$ and such that $x_{2n}\varphi=x_{2n}\wedge x_1$ and $x_j\varphi= x_{j+1}$ for $1\leq j<2n$. Then for each $\ga\in\A(X)$, $\ga\ol{\varphi}=\ol{\ga'}$ where $\ga'\in\B'(X)$ is the graph obtained from $\ga$ by replacing each label $x_j$ with $x_{j+1}$ for $1\leq j<2n$ and each vertex $\underset{x_{2n}}{\bullet}$ with \tikz[baseline=1pt,scale=0.3]{\coordinate [label=left:$x_{2n}$] (4) at (0,0.5); \coordinate [label=right:$x_1$] (5) at (2.5,0.5);\draw (4) --(5);\draw (4) node{$\bullet$};\draw (5) node{$\bullet$};}. Observe now that $\anki\ol{\varphi}=\al$ and $\bnki\ol{\varphi} =\be$, whence $(\al,\be)$ is a consequence of $\nki$. We have shown that $(\al_{n,k,i+1}^*,\be_{n,k,i+1}^*)$ is a consequence of $\nki$ for each $i<2n$.
\end{proof}

\begin{prop}\label{CompDualPEven}
If $i$ is even, then $\nki$ and $\nkic$ are equivalent.
\end{prop}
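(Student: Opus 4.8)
The plan is to prove the two inclusions $\mathrm{var}(\nki)\supseteq\mathrm{var}(\nkic)$ and $\mathrm{var}(\nkic)\supseteq\mathrm{var}(\nki)$ separately, i.e.\ to show that each of the two pairs is a consequence of the other. First I would observe that the bar--star operation is exactly the involution that realizes the left--right duality of the whole $D_n/D_n^*$ picture: it interchanges $D_n$ with $D_n^*$ and the ``right vertex adjoined'' family with the ``left vertex adjoined'' family. Consequently, any proved statement of the form ``$\sigma$ is a consequence of $\tau$'' dualizes to ``$\sigma^*$ is a consequence of $\tau^*$'', and it therefore suffices to prove a single implication, say that $\nkic$ is a consequence of $\nki$, and then to dualize it to obtain the reverse implication. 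I would also note that the immediately preceding Lemma \ref{CompDualP} is not quite enough on its own: it yields only the index-shifted pair $(\al_{n,k,i+1}^*,\be_{n,k,i+1}^*)$ as a consequence of $\nki$, and since Proposition \ref{nki_nlj} only lets one pass from shorter to longer graphs, one cannot descend back from index $i+1$ to index $i$. A fresh construction at the correct index is thus required.

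To prove that $\nkic$ is a consequence of $\nki$ I would imitate the mechanism of Lemma \ref{CompDualP}. Concretely, I would exhibit an endomorphism $\varphi$ of $F_2(X)$ of the same reflection-plus-wrap type as the one used there, built from the permutation $\tau$ together with a substitution of the form $x_{2n}\mapsto x_{2n}\wedge x_1$, and combine it with an application of Proposition \ref{DistVert} to relocate the distinguished vertices to the far end of the zigzag and, where needed, an application of Proposition \ref{relA}.(3) to pass to a bi-rooted subtree. The aim is to arrange that $\al_{n,k,i}\ol{\varphi}$ and $\be_{n,k,i}\ol{\varphi}$ reduce exactly to $\al_{n,k,i}^*$ and $\be_{n,k,i}^*$ (up to the rerooting), so that $\nkic$ is realized as the image of $\nki$ under $\ol{\varphi}$ composed with the equivalences furnished by Propositions \ref{DistVert} and \ref{relA}; Corollary \ref{deltafullyinvariant} then guarantees that this image is a genuine consequence of $\nki$.

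The place where evenness of $i$ enters, and the step I expect to be the main obstacle, is the identification of the reduced graphs above. The far endpoint of the zigzag $\al_{n,k,i}$ carries the label $x_i$ and is a right vertex precisely when $i$ is even (a left vertex when $i$ is odd). It is exactly this feature that allows the reflection to carry the right vertex adjoined in $\be_{n,k,i}$ over to a left vertex adjoined in the correct position, landing on $\be_{n,k,i}^*$ rather than on some other configuration. For odd $i$ the endpoint has the opposite parity, the reflected vertex would be attached on the wrong side, and the construction produces a genuinely different pair; this is precisely the obstruction that the next section will upgrade into a proof that $\nki$ and $\nkic$ are incomparable when $i$ is odd.

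The hard part will be the bookkeeping in the reduction $\al_{n,k,i}\ol{\varphi}\rightarrow\ol{\al_{n,k,i}\ol{\varphi}}$. One has to verify that every auxiliary vertex created by the wrap $x_{2n}\mapsto x_{2n}\wedge x_1$ is eliminated by edge-folding except for the single vertex that becomes the new endpoint, and that the remaining labels match the $D_n^*$ labelling of $\al_{n,k,i}^*$ along the entire length of the winding (with the analogous check for $\be$). This is the same style of reduction analysis already carried out in Proposition \ref{nki_nlj} and Lemma \ref{CompDualP}, but here it must be performed simultaneously with the reflection, and it is the parity of $i$ that makes the two endpoints fit together.
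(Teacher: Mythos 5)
Your skeleton agrees with the paper's proof at several points: reduce to one implication by duality, reroot the pair at the far end of the zigzag via Proposition \ref{DistVert}, and exploit the fact that for $i$ even the far endpoint of $\anki$ is a \emph{right} vertex, so that a reflection of the labels carries the pair into the $D_n^*$ world. However, the concrete aim you set yourself --- to find one endomorphism $\varphi$ (reflection plus the wrap $x_{2n}\mapsto x_{2n}\wedge x_1$) such that the rerooted $\anki$ and $\bnki$ map, after reduction, onto $\anki^*$ and $\bnki^*$ --- is unachievable, and this is a genuine gap rather than bookkeeping. The obstruction is positional. After rerooting, the unique vertex of $\bnki\setminus\anki$ sits at the end of the path \emph{opposite} to the new roots, whereas the unique vertex of $\bnki^*\setminus\anki^*$ is attached to a \emph{root} of $\bnki^*$; equivalently, in $\bnki$ the pendant is a right vertex attached to a left vertex, while in $\bnki^*$ it is a left vertex attached to a right vertex, and rerooting plus relabelling preserves the bipartition classes. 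So if the rerooted $\anki$ is sent onto $\anki^*$, the image of the rerooted $\bnki$ is $\anki^*$ with extra material hanging at the far end, never $\bnki^*$. With the pure reflection (which must be the permutation $\sigma$ centered so that $x_i\mapsto x_1$; the paper's $\tau$ does not even put the labels $x_1,x_2$ on the new roots) one gets exactly $\al_{n,k,i+1}^*$, and your wrap changes nothing: every pendant it creates sits, across a vertex newly labelled $x_{2n}$, next to a zigzag vertex labelled $x_1$ and is removed by an edge-folding (for $i\neq 2n$, the case the paper also isolates). Every variant of your construction keeps producing index-shifted pairs such as $(\anki^*,\al_{n,k,i+1}^*)$ or $(\al_{n,k,i+1},\be_{n,k,i+1})$, and stops there.

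What is missing is the step that converts $(\anki^*,\al_{n,k,i+1}^*)$ into $\nkic$, and this is precisely where Lemma \ref{CompDualP} --- which you set aside as insufficient --- must be brought back, together with an order-theoretic descent you did not consider. The paper finishes as follows: by Lemma \ref{CompDualP}, $(\al_{n,k,i+1}^*,\be_{n,k,i+1}^*)$ is a consequence of $\nki$, so by transitivity $(\anki^*,\be_{n,k,i+1}^*)$ is a consequence of $\nki$; and since $\be_{n,k,i+1}^*\leq\be_{n,k,i}^*\leq\anki^*$, any congruence containing $(\anki^*,\be_{n,k,i+1}^*)$ also contains $\nkic$ (meet the pair with $\be_{n,k,i}^*$ to get $(\be_{n,k,i}^*,\be_{n,k,i+1}^*)$, then use transitivity). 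You were right that Proposition \ref{nki_nlj} cannot descend from index $i+1$ back to $i$, but the descent \emph{is} available through the natural partial order; without this combination of the reflection step, Lemma \ref{CompDualP}, and the order argument, your proof cannot close, because the pair your construction actually produces is not $\nkic$.
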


\begin{proof}
We shall assume that $i\neq 2n$ and prove only this case. The proof of the case $i=2n$ follows the same arguments but with minor changes due to the fact that $(x_1,x_{2n})$ belongs to $D_n$ and not $(x_{2n+1},x_{2n})$. 

Let $a$ be the only non-distinguished vertex of degree 1 of $\anki$. Since $i$ is even, $a$ is a right vertex labeled with $x_i$. Let $b$ be the left vertex of $\anki$ connected to $a$ by an edge, and let $\al$ and $\be$ be respectively the graphs $\anki$ and $\bnki$ but now with the vertices $a$ and $b$ as the distinguished vertices. Then $\nki$ and $(\al,\be)$ are equivalent by Proposition \ref{DistVert}. Observe that we can obtain now $(\anki^*,\al_{n,k,i+1}^*)$  from $(\al,\be)$ by relabeling the vertices. To be more precise, consider the permutation
\[\sigma=\left(\begin{array}{cccccccc}
x_1 & x_2&\cdots &x_{i-1}&x_i&x_{i+1}&\cdots &x_{2n}\\
x_i&x_{i-1}&\cdots &x_2&x_1&x_{2n}&\cdots &x_{i+1}
\end{array}\right)\]
and extend it to an automorphism $\varphi$ of $F_2(X)$; then $\anki^*=\al\ol{\varphi}$ and $\al_{n,k,i+1}^*= \be\ol{\varphi}$. Thus $(\anki^*,\al_{n,k,i+1}^*)$ is a consequence of $\nki$, and by Lemma \ref{CompDualP} so is $(\anki^*,\be_{n,k,i+1}^*)$. Finally, since $\be_{n,k,i+1}^*\leq\be_{n,k,i}^*\leq\anki^*$, we conclude that $(\anki^*,\bnki^*)$ is a consequence of $(\anki,\bnki)$, and so these two pairs are equivalent by symmetry of the arguments used.
\end{proof}

The arguments presented in the previous proof do not work properly for the case $i$ odd mainly because $\al\ol{\varphi}$ and $\be\ol{\varphi}$ become respectively the graphs $\al_{n,k,i}$ and $\al_{n,k,i+1}$ (and not $\anki^*$ and $\al_{n,k,i+1}^*$). For the case $i$ odd, those arguments allow us to conclude however that the pairs $\nki$ and $(\anki,\al_{n,k,i+1})$ are equivalent. Curiously, we can use Proposition \ref{CompDualPEven} itself to prove that the previous conclusion also holds true for $i$ even.

\begin{lem}\label{coversanki}
The pairs $\nki$ and $(\anki,\al_{n,k,i+1})$ are equivalent for $i\neq 2n$. Further $(\al_{n,k,2n},\be_{n,k,2n})$ and $(\al_{n,k,2n},\al_{n,k+1,1})$ are also equivalent.
\end{lem}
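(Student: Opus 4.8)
The plan is to reuse, for both parities of $i<2n$, the ``flip'' that underlies the proof of Proposition \ref{CompDualPEven}. Let $a$ be the unique non-distinguished degree $1$ vertex of $\anki$ (necessarily labelled $x_i$) and let $b$ be its neighbour. Taking $a,b$ as the distinguished vertices of the graphs $\al,\be$ obtained from $\anki,\bnki$ gives, by Proposition \ref{DistVert}, a pair $(\al,\be)\equiv\nki$. Relabelling by the automorphism $\ol\varphi$ of $\A(X)$ induced by the permutation $\sigma$ from the proof of Proposition \ref{CompDualPEven} is an equivalence (since $\sigma$ is a permutation, $\ol\varphi$ is invertible, so $(\al,\be)$ and $(\al\ol\varphi,\be\ol\varphi)$ generate the same fully invariant congruence), so it remains only to read off $\al\ol\varphi$ and $\be\ol\varphi$.

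The whole statement then turns on a single parity dichotomy. For $i$ odd the free vertex $a$ is a \emph{left} vertex, so it becomes the new left root; running $\sigma$ back along the relabelled zig-zag restores the labels $x_1,x_2,\dots$ and \emph{preserves} the $2$-content $D_n$. A direct bookkeeping then gives $\al\ol\varphi=\anki$ and $\be\ol\varphi=\al_{n,k,i+1}$ (the extra vertex of $\bnki$, glued at the old left end, reappears as the single new vertex at the right end of the zig-zag, with the correct label and left/right type). By invertibility of $\ol\varphi$ this already yields $\nki\equiv(\anki,\al_{n,k,i+1})$, settling the odd case.

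For $i$ even the free vertex $a$ is instead a \emph{right} vertex, the roles of left and right get interchanged, and the very same relabelling now lands in the dual $2$-content $D_n^*$: this is exactly the computation in the proof of Proposition \ref{CompDualPEven}, giving $\al\ol\varphi=\anki^*$ and $\be\ol\varphi=\al^*_{n,k,i+1}$, hence $\nki\equiv(\anki^*,\al^*_{n,k,i+1})$. To return to the non-dual pair I would apply the dualizing automorphism $\ol{\psi^*}$ induced by the permutation $\tau$: since $\ol{\psi^*}$ sends each $D_n$-zig-zag to the corresponding $D_n^*$-zig-zag (so that $\anki\ol{\psi^*}=\anki^*$ and $\al_{n,k,i+1}\ol{\psi^*}=\al^*_{n,k,i+1}$), it carries $(\anki,\al_{n,k,i+1})$ to $(\anki^*,\al^*_{n,k,i+1})$, and being an automorphism it gives $(\anki,\al_{n,k,i+1})\equiv(\anki^*,\al^*_{n,k,i+1})$. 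Chaining the two equivalences yields $\nki\equiv(\anki,\al_{n,k,i+1})$ for even $i$ as well.

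Finally, the ``further'' assertion is the $i=2n$ case, which, $2n$ being even, falls under the second argument; the only changes are the usual wrap-around adaptations ($\al_{n,k+1,1}$ plays the role of $\al_{n,k,2n+1}$, and the pair $(x_1,x_{2n})\in D_n$ replaces $(x_{2n+1},x_{2n})$), exactly as in the $i=2n$ adjustments already used for Proposition \ref{CompDualPEven}. The routine part is the label bookkeeping identifying $\al\ol\varphi$ and $\be\ol\varphi$ as the claimed graphs; the genuinely delicate point — and the heart of the lemma — is the parity dichotomy itself, namely that the free end of $\anki$ being a left or a right vertex is precisely what decides whether the flip keeps the $2$-content $D_n$ (odd $i$, done in one stroke) or reverses it to $D_n^*$ (even $i$, forcing the extra dualizing step through Proposition \ref{CompDualPEven}). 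Keeping careful track of which vertex becomes which root, and hence of the resulting $2$-content, is where the argument must be handled with care.
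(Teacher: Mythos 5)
Your odd case is exactly the paper's argument (it is the content of the remark preceding the lemma, and the paper's own proof therefore only treats the even case). The even case, however, contains a genuine gap: the dualizing step is false. An endomorphism of $\A(X)$ induced by a permutation of letters, such as $\ol{\psi^*}$, only relabels vertices; it preserves the underlying bipartite structure, in particular which vertices are left/right vertices and which of the two roots is the left one. Now in $\anki$ the root sitting at an endpoint of the zig-zag path is the \emph{left} root (a left vertex labelled $x_1$), whereas in $\anki^*$ the endpoint root is the \emph{right} root (a right vertex labelled $x_1$). Hence no relabelling can satisfy $\anki\ol{\psi^*}=\anki^*$; in Section 3 the automorphism $\ol{\psi^*}$ is applied not to $\anki$ but to the zig-zag with $2$-content $D_n$ whose \emph{first} vertex is a right vertex, which is a different graph. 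Moreover, nothing in your claim ``$\ol{\psi^*}$ sends each $D_n$-zig-zag to the corresponding $D_n^*$-zig-zag'' uses the parity of $i$: were it true, it would also give $\bnki\ol{\psi^*}=\bnki^*$, hence $\nki\equiv\nkic$ for every $i$, contradicting Proposition \ref{CompDualPOdd} for $i$ odd. So the bridge $(\anki,\al_{n,k,i+1})\equiv(\anki^*,\al^*_{n,k,i+1})$ is unjustified as you derive it; that equivalence is in fact true for $i$ even, but only as a consequence of the lemma being proved together with Proposition \ref{CompDualPEven}, so using it here is circular. Since you reduce $i=2n$ to this argument, that case inherits the same gap.

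The repair is to flip the \emph{target} pair rather than to dualize it afterwards, which is the paper's route: apply Proposition \ref{DistVert} to $(\anki,\al_{n,k,i+1})$ at the far-end edge $(b,a)$ (with $a$, $b$ as in the proof of Proposition \ref{CompDualPEven}), and then relabel by the permutation $\sigma$ of that proof. The same bookkeeping you carried out for $(\al,\be)$ now shows that the flipped and relabelled pair is $(\anki^*,\be_{n,k,i}^*)=\nkic$ --- note that it is $\bnki^*$, and not $\al^*_{n,k,i+1}$, that appears: the extra vertex of $\al_{n,k,i+1}$ is attached to $a$, so after the flip it sits next to the new right root, and $\sigma$ turns its label $x_{i+1}$ into $x_{2n}$, producing exactly the extra left vertex of $\bnki^*$. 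Then Proposition \ref{CompDualPEven} ($\nki\equiv\nkic$ for $i$ even) closes the argument, and the $i=2n$ case follows with the usual wrap-around adaptations.
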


\begin{proof}
As mentioned above we just need to prove this result for the case $i$ even. In fact, we shall assume also that $i\neq 2n$ (the case $i=2n$ is similar and only needs minor adaptations by the same reason mentioned in the proof of Proposition \ref{CompDualPEven}). Let $a$ and $b$ be the vertices of $\anki$ considered in the proof of Proposition \ref{CompDualPEven}. Let $\al$ and $\be'$ be respectively the graphs $\anki$ and $\al_{n,k,i+1}$ but with the vertices $a$ and $b$ as their distinguished vertices. Thus $(\anki,\al_{n,k,i+1})$ and $(\al,\be')$ are equivalent pairs by Proposition \ref{DistVert}. Consider again the permutation $\sigma$ and the automorphism $\varphi$ used in the proof of Proposition \ref{CompDualPEven}; thus $\al\ol{\varphi}=\anki^*$. A close analysis to the image of $\be'$ under $\ol{\varphi}$ allows us to conclude that $\be'\ol{\varphi}=\be_{n,k,i}^*$. Hence $(\al,\be')$ and $\nkic$ are equivalent pairs. But $\nkic$ and $\nki$ are equivalent by Proposition \ref{CompDualPEven} since $i$ is even, whence $(\anki,\al_{n,k,i+1})$ and $\nki$ are equivalent pairs too.
\end{proof}

The graphs $\bnki$ and $\al_{n,k,i+1}$ (or $\al_{n,k+1,1}$ if $i=2n$) are the only graphs covered by $\anki$ for the natural partial order with 2-content $D_n$. We shall use this fact to prove that if $\nki$ is a consequence of some $I\subseteq\A(X)\times\A(X)$, then $\nki$ is a consequence of a single pair $(\al,\be)\in I$. To prove this claim we shall mix the notion of pair of elements from $\A(X)$ with the notion of identity. Let $u_{n,k,i}$ and $v_{n,k,i}$ be (the unique) words of $F_2(X)$ such that 
\[\De(\unki)=\anki\quad\mbox{ and }\quad \De(\vnki)=\bnki\, ,\]
and let $u_{n,k,i}^*$ and $v_{n,k,i}^*$ be (the unique) words of $F_2(X)$ such that 
\[\De(\unki^*)=\anki^*\quad\mbox{ and }\quad \De(\vnki^*)=\bnki^*\, .\]
Thus, the identities \unkiv\ and \unkicv\ correspond respectively to the elementary pairs $\nki$ and $\nkic$.

\begin{lem}\label{singlepair}
If $\nki$ is a consequence of $I\subseteq\A(X) \times \A(X)$, then there exists $(\al,\be)\in I$ such that $\nki$ is a consequence of $(\al,\be)$.
\end{lem}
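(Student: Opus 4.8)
The plan is to recast the statement entirely in terms of fully invariant congruences on $\A(X)$. Since a relation is a consequence of another exactly when it lies in the fully invariant congruence the latter generates, the hypothesis says precisely that $(\anki,\bnki)$ belongs to $\theta:=\bigvee_{(\al,\be)\in I}\theta_{(\al,\be)}$, where $\theta_{(\al,\be)}$ denotes the fully invariant congruence generated by the single pair $(\al,\be)$. Membership of $(\anki,\bnki)$ in this join unwinds, by the usual description of a join of congruences, to the existence of a finite chain
\[\anki=w_0,\ w_1,\ \dots,\ w_m=\bnki\]
in $\A(X)$ each of whose links $(w_t,w_{t+1})$ lies in a single $\theta_{(\al_t,\be_t)}$ with $(\al_t,\be_t)\in I$. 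As the chain is finite, only finitely many pairs of $I$ occur, so the whole task is to show that one of these finitely many single pairs already has $\nki$ as a consequence.

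The second step is to confine the chain to the semilattice $\S_{x_1,x_2}(X)$. Because $\anki$ is idempotent with $\l(\anki)=x_1$ and $\r(\anki)=x_2$, and because a congruence is compatible with $\wedge$ in each argument, applying the map $\gamma\mapsto(\anki\wedge\gamma)\wedge\anki$ to every term of the chain carries each link into the \emph{same} $\theta_{(\al_t,\be_t)}$. By Proposition~\ref{modelA} every resulting term then lies in $\S_{x_1,x_2}(X)$, where $\wedge$ is associative, commutative and idempotent, so in fact $(\anki\wedge\gamma)\wedge\anki=\anki\wedge\gamma\leq\anki$ and all terms sit inside the principal ideal $(\anki]_{\leq}$. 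At the two ends the map fixes $\anki$ and sends $\bnki$ to $\bnki$, since $\bnki\leq\anki$ by Proposition~\ref{relA}.(3). I thus obtain a chain inside $(\anki]_{\leq}$ descending from $\anki$ to $\bnki$, whose links still each belong to one of the $\theta_{(\al_t,\be_t)}$.

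The heart of the argument is then to exploit the covering fact recorded above: among graphs of $2$-content $D_n$, the element $\anki$ covers exactly $\bnki$ and $\al_{n,k,i+1}$ (resp. $\al_{n,k+1,1}$ when $i=2n$). Reading along the confined chain from $w_0=\anki$, I would track the first passage to a strictly smaller term and aim to show it can be arranged to land on a cover of $\anki$ of $2$-content $D_n$; say this link belongs to $\theta_{(\al,\be)}$. If the cover reached is $\bnki$, then $(\anki,\bnki)\in\theta_{(\al,\be)}$ directly and we are done. If it is $\al_{n,k,i+1}$, then $(\anki,\al_{n,k,i+1})\in\theta_{(\al,\be)}$; but by Lemma~\ref{coversanki} the pairs $(\anki,\al_{n,k,i+1})$ and $\nki$ are equivalent, i.e. $\theta_{(\anki,\bnki)}=\theta_{(\anki,\al_{n,k,i+1})}$, and since this smallest fully invariant congruence is contained in $\theta_{(\al,\be)}$ we again conclude $(\anki,\bnki)\in\theta_{(\al,\be)}$. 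Either way a single pair of $I$ suffices. (The use of full invariance of $\theta_{(\al,\be)}$ here is essential, which is why it matters that we work with fully invariant congruences throughout.)

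The delicate point, and the step I expect to be the main obstacle, is guaranteeing that the first descent really reaches a \emph{cover} of $2$-content $D_n$, rather than dropping strictly below a cover or producing a term of larger $2$-content. Meeting with $\anki$ keeps the $2$-content $\supseteq D_n$ but can enlarge it whenever the responsible generating pair does not already have $2$-content $D_n$, and a priori $\anki$ has further covers in $\S_{x_1,x_2}(X)$ once the $D_n$ restriction is dropped. To control this I would combine the confinement with the full invariance of the $\theta_{(\al_t,\be_t)}$, applying a fixed endomorphism that fixes $x_1,\dots,x_{2n}$ and collapses all remaining letters, and then argue — using the rigidity of the $\la_{2n}$-segments from Lemma~\ref{uniqueword}, together with Propositions~\ref{FinSubIn'} and \ref{nki_nlj} (which replace any productive pair by a $\nki$-type pair and make these totally comparable by vertex count) — that the descent can be pushed back up to a $D_n$-cover without leaving the family of zig-zag graphs of $2$-content $D_n$. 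This normalisation, rather than the final covering dichotomy, is where the real work lies.
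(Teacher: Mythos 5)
Your overall architecture is the paper's: a finite chain from $\anki$ to $\bnki$ whose links are each consequences of a single pair of $I$, confinement below $\anki$ by meeting with $\anki$, a first index of strict descent, the fact that $\bnki$ and $\al_{n,k,i+1}$ (resp.\ $\al_{n,k+1,1}$ when $i=2n$) are the only covers of $\anki$ with 2-content $D_n$, and Lemma~\ref{coversanki} to fold the second alternative back into $\nki$. One of your two stated worries is, however, vacuous: you never need the descent to land \emph{exactly} on a cover. If $(\anki,\de)\in\theta_{(\al,\be)}$ with $\de<\anki$ and $\co_2(\de)=D_n$, pick any $D_n$-cover $\kappa$ of $\anki$ with $\de\le\kappa\le\anki$; since all three elements lie in the semilattice $\S_{x_1,x_2}(X)$, we get $\kappa=\kappa\wedge\anki\mathrel{\theta_{(\al,\be)}}\kappa\wedge\de=\de\mathrel{\theta_{(\al,\be)}}\anki$, so the cover is reached for free. (A minor slip: $(\anki\wedge\gamma)\wedge\anki=\anki\wedge\gamma$ fails when $\gamma\notin\S_{x_1,x_2}(X)$, but this is harmless, as the double meet is still $\le\anki$ and has extreme letters $x_1,x_2$, which is all you use.)

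The genuine gap is the 2-content control, exactly where you place it, and your proposed repair does not close it. An endomorphism collapsing the letters outside $\{x_1,\dots,x_{2n}\}$ cannot prevent a link of the chain from introducing pairs such as $(x_1,x_4)\notin D_n$ formed from the letters $x_1,\dots,x_{2n}$ themselves, and Lemma~\ref{uniqueword} and Propositions~\ref{FinSubIn'} and \ref{nki_nlj} all \emph{presuppose} 2-content $D_n$ (or $D_n^*$); they cannot force a chain element to have it. What the paper does instead is a preliminary dichotomy on $I$, which is the idea your proposal lacks. If some $(\al,\be)\in I$ lies outside $\rho_{\bf SPS}$, the corresponding identity fails in some \spseudo , so the variety it defines does not contain ${\bf SPS}$; since ${\bf SPS}$ is the least variety containing non-semigroup \pseudo s, that variety consists of normal bands, and from ${\bf NB}\subseteq{\bf SPS}\subseteq\Vnki$ that single pair already has $\nki$ as a consequence. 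Otherwise $I\subseteq\rho_{\bf SPS}$, and then each $\theta_{(\al_t,\be_t)}$ is contained in the fully invariant congruence $\rho_{\bf SPS}$, so every link of your chain preserves the 2-content and the maximal subsemilattice: all $w_t$ lie in $\S_{x_1,x_2}(X)$ with $\co_2(w_t)=D_n$, the confined elements $\anki\wedge w_t$ have 2-content $D_n$, and your covering step then goes through verbatim. With this case distinction inserted your proof is complete and essentially coincides with the paper's (which runs the same chain argument on words, locating the first step where $\anki\nleq\Te(w_j)$); without it, the chain may leave the $D_n$-world in a way none of the lemmas you invoke can repair, so the ``real work'' you anticipate at the end is not the right work to be doing.
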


\begin{proof} 
We shall prove the following equivalent statement: if \unkiv\ is a consequence of a set $J$ of identities, then there exists $u\approx v\in J$ such that \unkiv\ is a consequence of $u\approx v$. We may assume that all identities in $J$ are satisfied by all \spseudo s since an identity not satisfied by all \spseudo s defines a variety of normal bands, and therefore implies \unkiv . Thus $(\l(u),\co_2(u),\r(u))=(\l(v),\co_2(v), \r(v))$ for any $u\approx v\in J$.

Let $u\approx v\in J$ and $u'\in F_2(X)$ such that $\anki \leq\Te(u')$ and $u\varphi$ is a subword of $u'$ for an endomorphism $\varphi$ of $F_2(X)$. Let $v'$ be the word obtained from $u'$ by replacing the subword $u\varphi$ with $v\varphi$. This lemma becomes proved once we show that if $\anki\nleq \Te(v')$ then \unkiv\ is a consequence of $u\approx v$. Indeed, if \unkiv\ is a consequence of $J$, then there exist a sequence $\unki=w_0,w_1,\cdots,w_l=\vnki$ of words from $F_2(X)$, identities $r_j\approx s_j\in J$ (or $s_j\approx r_j\in J$) and endomorphisms $\varphi_j$ of $F_2(X)$ for $j=1,\cdots,l$ such that $r_j\varphi_j$ is a subword of $w_{j-1}$ and $w_j$ is obtained from $w_{j-1}$ by replacing the subword $r_j\varphi_j$ with $s_j\varphi_j$. Now, since $\anki\leq\Te(w_0)$ and $\anki\nleq\Te(w_l)$, there exists some $j$ such that $\anki\leq\Te(w_{j-1})$ and $\anki\nleq\Te(w_j)$. Thus, after showing our claim above, we can conclude that \unkiv\ is a consequence of $r_j\approx s_j\in J$.

Let $u$, $v$, $u'$, $v'$ and $\varphi$ be as above. First note that $\anki$, $\Te(u')$ and $\Te(v')$ belong to the same maximal subsemilattice of $\A(X)$ and that $\co_2(u')= \co_2(v')\subseteq D_n$. Further, $\unki\wedge u'\approx \unki \wedge v'$ is also a consequence of $u\approx v$. We must have now 
\[\Te(\unki\wedge u')=\anki\wedge \Te(u')=\anki\] 
and 
\[\Te(\unki\wedge v')=\anki\wedge\Te(v') < \anki\] respectively because $\anki\leq\Te(u')$ and $\anki\nleq\Te(v')$, whence $(\anki,\al)$ is a consequence of $(\Te(u),\Te(v))$ for $\al=\Te(\unki\wedge v')$. Since $\bnki$ and $\al_{n,k,i+1}$ (or $\al_{n,k+1,1}$ if $i=2n$) are the only graphs of $\A(X)$ covered by $\anki$ and with the same 2-content as $\anki$, we conclude that
\[\al\leq\bnki\leq\anki\;\mbox{ or }\; \al\leq\al_{n,k,i+1} \leq\anki\, .\]
Thus $(\anki,\bnki)$ or $(\anki,\al_{n,k,i+1})$ is a consequence of $(\Te(u),\Te(v))$. Finally, by Lemma \ref{coversanki}, $(\anki,\bnki)$ is a consequence of $(\Te(u),\Te(v))$, that is, \unkiv\ is a consequence of $u\approx v\in J$.
\end{proof}

For $n\geq 2$, let
\[I_n=\{(\al_{n,k,i},\be_{n,k,i})\, ,\; (\al^*_{n,k,j},\be^*_{n,k,j})\mid k\geq 1,\; 1\leq i,j\leq 2n\mbox{ and } j \mbox{ odd}\}\, .\]
By Propositions \ref{FinSubIn'} and \ref{CompDualPEven}, any elementary pair with 2-content $D_n$ is equivalent to an elementary pair from $I_n$. By Lemma \ref{singlepair} and its dual, if a pair from $I_n$ is a consequence of a subset $I$ of $\rho_{\bf SPS}$, then it is a consequence of a single pair from $I$. We shall end this section by proving that any subset of $\rho_{\bf SPS}$ composed by non-trivial pairs all with 2-content $D_n$ is equivalent to a pair from $I_n$ or to a set composed by two pairs from $I_n$.

\begin{prop}\label{SubIn}
Let $I$ be a subset of $\rho_{\bf SPS}$ composed by non-trivial pairs, all with 2-content $D_n$. Then $I$ is equivalent to a pair from $I_n$ or to a subset $\{\nki,\,\nkic\}$ of $I_n$ for some $i$ odd. 
\end{prop}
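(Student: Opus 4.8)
The plan is to reduce $I$ to a manageable subset of $I_n$ and then exploit the total order that the consequence relation induces on $I_n$. First I would invoke Proposition~\ref{EquivEl}: each pair of $I$ lies in $\rho_{\bf SPS}$, is non-trivial, and has $2$-content $D_n$; moreover $D_n$ has disjoint left and right contents, since every off-diagonal pair of $D_n$ has an odd-indexed first coordinate and an even-indexed second coordinate, so in any graph with $2$-content $D_n$ every left vertex carries an odd-indexed label and every right vertex an even-indexed one. Hence each pair of $I$ is equivalent to a finite set of elementary pairs, all again with $2$-content $D_n$, and so $I$ is equivalent to the union $E$ of all of these. By Proposition~\ref{FinSubIn'} each such elementary pair is equivalent to some $\nki$ or to some $\nkic$; and whenever the index of a starred pair is even, Proposition~\ref{CompDualPEven} replaces $\nkic$ by the equivalent unstarred $\nki$. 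Thus $I$ is equivalent to a set of pairs drawn from $I_n$, which I split into the collection $U$ of unstarred pairs $\nki$ that occur and the collection $V$ of starred pairs $\nkjc$ (with $j$ odd) that occur.

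Next I would collapse $U$ and $V$ each to a single generator. By Proposition~\ref{nki_nlj} the unstarred pairs are totally ordered by consequence according to the integer $2nk+i$, the pair of smallest such value being the strongest. Since these values are positive integers, $U$ (whether finite or not) has a member of least value, say $P=(\al_{n,k_0,i_0},\be_{n,k_0,i_0})$, of which every other member of $U$ is a consequence; hence $U$ is equivalent to $P$, and dually $V$ is equivalent to its least member $Q=(\al^*_{n,l_0,j_0},\be^*_{n,l_0,j_0})$ with $j_0$ odd. If one of $U,V$ is empty, then $I$ is already equivalent to a single pair of $I_n$ and we are done; otherwise $I$ is equivalent to $\{P,Q\}$, and it remains to analyse this two-element set. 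Write $s_P=2nk_0+i_0$ and $s_Q=2nl_0+j_0$.

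The crux is to compare the unstarred generator $P$ with the starred generator $Q$ across the two families, and here the cross-implications of Lemma~\ref{CompDualP}, together with its left-right dual (that $\nkjc$ forces the unstarred pair of index $j+1$), do the work. If $i_0$ is even, Proposition~\ref{CompDualPEven} turns $P$ into a starred pair of value $s_P$; then $P$ and $Q$ are both starred, hence totally ordered by Proposition~\ref{nki_nlj}, and $\{P,Q\}$ is equivalent to whichever has the smaller value, a single pair of $I_n$ (here $s_P\neq s_Q$ automatically, as $i_0$ is even while $j_0$ is odd). If $i_0$ is odd, then $i_0<2n$, and I would split according to $s_P-s_Q$. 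When $s_Q\geq s_P+1$, Lemma~\ref{CompDualP} gives that $P$ implies the starred pair of value $s_P+1$, which Proposition~\ref{nki_nlj} propagates to all starred pairs of value $\geq s_P+1$, in particular to $Q$, so $\{P,Q\}$ is equivalent to $P$; symmetrically, when $s_P\geq s_Q+1$ the dual of Lemma~\ref{CompDualP} (applicable since $j_0$ is odd, hence $<2n$) together with Proposition~\ref{nki_nlj} shows $P$ is a consequence of $Q$, so $\{P,Q\}$ is equivalent to $Q$. Finally, when $s_P=s_Q$ the uniqueness of the representation $m=2nk+i$ with $1\leq i\leq 2n$ forces $(k_0,i_0)=(l_0,j_0)$, so $i_0=j_0$ is odd and $\{P,Q\}=\{\nki,\nkic\}$ with $i$ odd, exactly the remaining alternative. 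The main obstacle I expect is organising these cross-family comparisons cleanly and marrying the off-by-one shift of Lemma~\ref{CompDualP} to the monotonicity of Proposition~\ref{nki_nlj}; the delicate point is the parity bookkeeping, for it is precisely the odd case with $s_P=s_Q$ that survives to produce a genuine two-pair set and hence the dichotomy in the statement.
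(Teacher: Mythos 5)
Your second half---collapsing the unstarred and the starred pairs each to their member of least value $2nk+i$ and then running the cross-family comparison via Lemma \ref{CompDualP}, its dual, and Proposition \ref{CompDualPEven}---is correct, and in fact fills in what the paper dismisses with ``follows immediately from Propositions \ref{nki_nlj} and \ref{CompDualPEven} and Lemma \ref{CompDualP}''. The gap is at your very first step. To apply Proposition \ref{EquivEl} with preservation of the $2$-content you claim that every graph with $2$-content $D_n$ has disjoint left and right contents, because every off-diagonal pair of $D_n$ has odd-indexed first and even-indexed second coordinate. This forgets the diagonal pairs $(x_p,x_p)$, $1\leq p\leq 2n$, which also belong to $D_n$: an edge whose two endpoints carry the same label is compatible with $2$-content $D_n$. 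Reduction eliminates such edges only when they form \emph{non-essential} thorns; an \emph{essential} thorn survives, i.e.\ a reduced graph with $2$-content $D_n$ may have a distinguished vertex of degree one carrying the same label as the other root. Concretely, $\Te(x_2)\wedge\anki$ is reduced, has $2$-content $D_n$, and has a left vertex (its left root) labeled $x_2$; and $(\Te(x_2)\wedge\anki,\,\Te(x_2)\wedge\bnki)$ is a non-trivial pair of $\rho_{\bf SPS}$ with $2$-content $D_n$, so sets $I$ containing such pairs fall under the proposition. For these pairs the hypothesis of the ``Further'' clause of Proposition \ref{EquivEl} fails, and that proposition then only bounds $|\co(\al_1)|$; it does \emph{not} produce elementary pairs with $2$-content $D_n$, so Proposition \ref{FinSubIn'} cannot be applied to its output and your chain of reductions breaks.

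This is exactly the case the paper's proof spends most of its effort on, and the repair is the preprocessing you are missing. First replace each $(\al,\be)\in I$ by the equivalent set $\{(\al,\al\wedge\be),(\be,\al\wedge\be)\}$, so that one may assume $\ga\leq\al$. Then, if $\lv_\al$ lies in an essential thorn (its label is $x_p$ with $p$ even), pass to $(\al',\ga')=(\Te(\cb_a)\wedge\al,\;\Te(\cb_a)\wedge\ga)$, where $a$ is the neighbour of $\rv_\al$ distinct from $\lv_\al$: this deletes the thorn, keeps $\ga'\leq\al'$ and the $2$-content $D_n$, and is a genuine equivalence (not merely a consequence) because $\al=\Te(x_p)\wedge\al'$ and $\ga=\Te(x_p)\wedge\ga'$; the comparability $\ga\leq\al$ is what guarantees that $\al$ and $\ga$ have the same roots with the same labels and so lose the thorn simultaneously. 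After this step and its right-hand dual, the left and right contents really are disjoint, and from there your argument---Proposition \ref{EquivEl}, then Proposition \ref{FinSubIn'} and \ref{CompDualPEven}, then your ordering analysis inside $I_n$---goes through.
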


\begin{proof}
This result becomes proved once we show that a non-trivial pair $(\al,\ga)$ from $\rho_{\bf SPS}$ with $\ga\leq\al$ and with 2-content $D_n$ is equivalent to a (finite) subset of $I_n$. Indeed, since any pair $(\al,\be)\in\rho_{\bf SPS}$ is equivalent to the set $\{(\al,\al\wedge\be),(\be,\al\wedge\be)\}$ and $\al\wedge\be\leq \al$ and $\al\wedge\be\leq\be$, we can then conclude that $I$ is equivalent to a subset of $I_n$. This proposition then follows immediately from Propositions \ref{nki_nlj} and \ref{CompDualPEven} and Lemma \ref{CompDualP}.

So, let $(\al,\ga)$ be a non-trivial pair of $\rho_{\bf SPS}$ with $\ga\leq\al$ and with 2-content $D_n$, and let us prove that $(\al,\ga)$ is equivalent to a subset of $I_n$. Thus $\al$ is a bi-rooted subtree of $\ga$. Assume first that $\lv_\al$ belongs to an essential thorn of $\al$. Then $\lv_\al=\rv_\al=\lv_\ga=\rv_\ga=x_i$ for some $i$ even. Let $a$ be another vertex connected to $\rv_\al$ in $\al$ but distinct from $\lv_\al$. Set 
\[\al'=\Te(\cb_a)\wedge\al\;\mbox{ and }\; \ga'=\Te(\cb_a) \wedge\ga\, .\] 
Then $\al'$ is the graph obtained from $\al$ by deleting the vertex $\lv_\al$ and the edge $(\lv_\al,\rv_\al)$, setting $a$ as the left root and keeping $\rv_\al$ as the right root ($\ga'$ is obtained from $\ga$ similarly). But since
\[\al=\Te(x_i)\wedge\al'\;\mbox{ and }\;\ga=\Te(x_i)\wedge\ga'\, ,\]
the pairs $(\al,\ga)$ and $(\al',\ga')$ are equivalent, and $\al'$ has no essential thorn. Further, $\ga'\leq\al'$ and the 2-content of $\al'$ continues to be $D_n$. Thus, by symmetry, we can assume that $\al$ (and $\ga$) has no essential thorn. Now, since $D_n$ is the 2-content of $\al$ and $\al$ has no essential thorn, then the left and right contents of $\al$ are disjoint. Thereby, by Proposition \ref{EquivEl}, $(\al,\ga)$ is equivalent to a finite subset of elementary pairs, all with 2-content $D_n$; and finally by Proposition \ref{FinSubIn'}, $(\al,\ga)$ is equivalent to a (finite) subset of $I_n$.
\end{proof} 

We can formulate Proposition \ref{SubIn} in terms of varieties of \pseudo s.

\begin{cor}\label{VarIdn}
Let $\bf V$ be a variety of \pseudo s defined by a set of identities, all with 2-content $D_n$. Then $\bf V$ is defined by a single identity \unkiv , or by a single identity \unkicv\ with $i$ odd, or by $\{\unki\approx \vnki,\,\unki^*\approx\vnki^*\}$ with $i$ odd.
\end{cor}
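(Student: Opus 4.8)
The plan is to reduce the statement about varieties directly to the already-established Proposition \ref{SubIn} about pairs. A variety $\bf V$ of the stated form is defined by some set of identities all having 2-content $D_n$; translating identities into pairs of $\A(X)$, this means $\bf V$ is determined by a set $I\subseteq\rho_{\bf SPS}$ of non-trivial pairs, all with 2-content $D_n$. (We may assume every identity is non-trivial and satisfied by all \spseudo s: a trivial identity imposes no restriction beyond $\bf SPS$, and any identity with a fixed 2-content $D_n$ whose words differ must lie in $\rho_{\bf SPS}$ by Auinger's characterization, since sharing the 2-content forces the same leftmost and rightmost letters once we are inside a single maximal subsemilattice.) The point is that defining $\bf V$ by the identity set $I$ is the same as saying that $\rho_{\bf V}$ is the fully invariant congruence generated by $I$, so two identity sets define the same variety precisely when they are equivalent in the sense of the paper.

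First I would invoke Proposition \ref{SubIn}: the set $I$ is equivalent either to a single pair from $I_n$ or to a subset of the form $\{\nki,\nkic\}$ of $I_n$ with $i$ odd. Translating back through the correspondence between pairs $(\al_{n,k,i},\be_{n,k,i})$ and identities \unkiv, and between $(\al^*_{n,k,i},\be^*_{n,k,i})$ and \unkicv, this immediately gives that $\bf V$ is defined by a single identity \unkiv, or by a single identity \unkicv\ (with $i$ odd, since that is the only range in which the starred pairs were kept in $I_n$), or by the two-element set $\{\unki\approx\vnki,\,\unki^*\approx\vnki^*\}$ with $i$ odd. That is exactly the trichotomy in the statement, so the corollary follows.

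The main subtlety — really the only nontrivial point — is the passage between the language of varieties/identities and the language of pairs/fully invariant congruences, together with justifying that we may restrict attention to non-trivial pairs in $\rho_{\bf SPS}$. The bookkeeping is that a \textbf{basis} of identities for $\bf V$ corresponds to a \textbf{generating set} of $\rho_{\bf V}$ as a fully invariant congruence (as recalled in the excerpt), and ``$I$ is equivalent to $J$'' means $I$ and $J$ generate the same fully invariant congruence, equivalently define the same variety. Once this dictionary is in place the argument is a one-line application of Proposition \ref{SubIn}. I expect no genuine obstacle here; the corollary is essentially a restatement of Proposition \ref{SubIn} in variety-theoretic language, and the proof can be written in a few sentences doing exactly this translation.
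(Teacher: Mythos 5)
Your core argument --- set up the dictionary between sets of identities and subsets of $\A(X)\times\A(X)$, then apply Proposition \ref{SubIn} and translate back --- is exactly what the paper does: the corollary is presented there as a pure reformulation of Proposition \ref{SubIn}, with no further proof. So in substance you have the intended argument.

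However, the parenthetical by which you force the hypotheses of Proposition \ref{SubIn} contains a false claim. It is not true that two distinct words sharing the 2-content $D_n$ must have the same leftmost and rightmost letters. For instance, relabelling $u_{n,1,1}$ by the cyclic shift $x_p\mapsto x_{p+2}$ (indices taken modulo $2n$) yields a word $v$ with $\co_2(v)=D_n$, because $D_n$ is invariant under this shift, but with $\l(v)=x_3$ and $\r(v)=x_4$; so $u_{n,1,1}\approx v$ has 2-content $D_n$ on both sides and yet is \emph{not} satisfied by all \spseudo s. Your qualifier ``once we are inside a single maximal subsemilattice'' makes the argument circular, since by Proposition \ref{modelA} lying in one maximal subsemilattice of $\A(X)$ \emph{means} having the same leftmost and rightmost letters. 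The reason no correct proof of your parenthetical can exist is that, under your literal reading, the corollary itself would be false: an identity with 2-content $D_n$ that fails in some \spseudo\ defines a variety consisting of normal bands only, hence a variety not containing ${\bf SPS}$, which therefore cannot equal any of $\Vnki$, $\Vnkic$ or $\Vnki\cap\Vnkic$ (all of which contain ${\bf SPS}$, their defining pairs lying in $\rho_{\bf SPS}$). The correct resolution is interpretive, not deductive: following the framing of Section 3 (``identities satisfied by all \spseudo s and whose 2-content is $D_n$''), the corollary implicitly restricts to identities corresponding to pairs of $\rho_{\bf SPS}$, and then Proposition \ref{SubIn} applies verbatim. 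A smaller slip of the same kind: a trivial identity defines ${\bf PS}$, not ${\bf SPS}$, and discarding trivial identities is legitimate only when some non-trivial identity remains --- if all of them were trivial, the defined variety would be ${\bf PS}$, again outside the stated trichotomy; this is another implicit non-degeneracy assumption of the statement rather than something to be proved.
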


We shall see in the next section that no two pairs from $I_n$ are equivalent, and that no set $\{\unki\approx \vnki,\,\unki^*\approx\vnki^*\}$ with $i$ odd is equivalent to a pair from $I_n$.

\section{Varieties defined by a single identity with 2-content $D_n$}

Let $k\geq 1$ and $1\leq i,j\leq 2n$ with $j$ odd, and set $\Vnki$ and $\Vnkjc$ as the varieties of \pseudo s defined respectively by the identities \unkiv\ and $u_{n,k,j}^* \approx v_{n,k,j}^*$. 

\begin{prop}\label{VnkiCompIrred}
The varieties $\Vnki$ and $\Vnkjc$ with $j$ odd are complete $\cap$-irreducible in the lattice ${\mathcal{L}}({\bf PS})$.
\end{prop}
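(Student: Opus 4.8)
The plan is to recast complete $\cap$-irreducibility in the language of fully invariant congruences on $\A(X)$ and then let Lemma~\ref{singlepair} supply the essential compactness. Recall the order-reversing dictionary between varieties of \pseudo s and fully invariant congruences on $\A(X)$: a variety $\mathbf{W}$ corresponds to $\rho_{\mathbf{W}}$, the inclusion $\mathbf{V}\subseteq\mathbf{W}$ is equivalent to $\rho_{\mathbf{W}}\subseteq\rho_{\mathbf{V}}$, and an intersection of varieties corresponds to the join of the associated congruences. Thus, for an arbitrary family $\{\mathbf{W}_s\}_{s\in S}$, one has $\Vnki=\bigcap_{s\in S}\mathbf{W}_s$ \iff\ $\rho_{\Vnki}=\bigvee_{s\in S}\rho_{\mathbf{W}_s}$, where the right-hand side is the fully invariant congruence generated by the union $\bigcup_{s\in S}\rho_{\mathbf{W}_s}$. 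Since $\Vnki$ is defined by the single identity \unkiv, the congruence $\rho_{\Vnki}$ is the fully invariant congruence generated by the single pair $\nki$ (here $\Te(\unki)=\anki$ and $\Te(\vnki)=\bnki$, as $\anki,\bnki\in\A(X)$ are already reduced).

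First I would suppose $\Vnki=\bigcap_{s\in S}\mathbf{W}_s$ and aim to produce an $s$ with $\Vnki=\mathbf{W}_s$, which is exactly complete $\cap$-irreducibility. From the equality of congruences above, $\nki\in\bigvee_{s\in S}\rho_{\mathbf{W}_s}$; equivalently, $\nki$ is a consequence of the set $I=\bigcup_{s\in S}\rho_{\mathbf{W}_s}\subseteq\A(X)\times\A(X)$. Now Lemma~\ref{singlepair} applies directly and yields a single pair $(\al,\be)\in I$ of which $\nki$ is a consequence. Choose $s\in S$ with $(\al,\be)\in\rho_{\mathbf{W}_s}$; since $\rho_{\mathbf{W}_s}$ is itself fully invariant and contains $(\al,\be)$, it contains the whole fully invariant congruence generated by $(\al,\be)$, whence $\nki\in\rho_{\mathbf{W}_s}$. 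Therefore $\rho_{\Vnki}\subseteq\rho_{\mathbf{W}_s}$. Combined with $\rho_{\mathbf{W}_s}\subseteq\rho_{\Vnki}$ (which follows from $\Vnki\subseteq\mathbf{W}_s$), this forces $\rho_{\mathbf{W}_s}=\rho_{\Vnki}$, i.e.\ $\mathbf{W}_s=\Vnki$, as required.

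For $\Vnkjc$ with $j$ odd the argument is word for word the same, with $\nki$ replaced by $\nkjc$ and Lemma~\ref{singlepair} replaced by its left-right dual; the dual lemma is precisely the statement that if $\nkjc$ is a consequence of some $I\subseteq\A(X)\times\A(X)$, then it is a consequence of a single pair of $I$. No new idea is needed, only the symmetric version of the same reduction.

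I expect the only delicate point to be the bookkeeping around the word \emph{consequence}: one must make sure that the join $\bigvee_{s\in S}\rho_{\mathbf{W}_s}$ genuinely coincides with the fully invariant congruence \emph{generated} by $\bigcup_{s\in S}\rho_{\mathbf{W}_s}$ (so that membership of $\nki$ in the join is literally being a consequence of that union), and that the passage from an a priori infinite intersection to a single equality is effected entirely by the compactness conclusion of Lemma~\ref{singlepair} — namely that a single pair already forces $\nki$. Everything else is the routine order-reversing correspondence between varieties and fully invariant congruences on $\A(X)$.
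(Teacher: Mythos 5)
Your proof is correct and follows essentially the same route as the paper: both arguments hinge entirely on Lemma~\ref{singlepair} (and its dual for $\Vnkjc$) as the compactness step, the only difference being that you phrase the reduction in terms of fully invariant congruences on $\A(X)$ while the paper phrases it in terms of bases of identities $B_j$ for the varieties $\mathbf{U}_j$. The paper in fact proves the slightly stronger ``meet-prime'' form (from $\cap_j\mathbf{U}_j\subseteq\Vnki$ it deduces some $\mathbf{U}_j\subseteq\Vnki$), but your version establishes exactly the stated complete $\cap$-irreducibility, so nothing is missing.
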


\begin{proof}
Let $\{{\bf U_j}\,\mid j\in J\}$ be a family of varieties of \pseudo s such that $\cap_{j\in J}{\bf U_j}\subseteq \Vnki$, and let $B_j$ be a basis of identities for each $\bf U_j$. Then $u_{n,k,i} \approx v_{n,k,i}$ is a consequence of $\cup_{i\in J} B_j$. By Lemma \ref{singlepair}, $u_{n,k,i} \approx v_{n,k,i}$ is a consequence of a single identity from $\cup_{i\in J} B_j$; whence some $\bf U_j$ is contained in $\Vnki$. The proof for $\Vnkjc$ is similar.
\end{proof}

\begin{cor}
Each variety $\Vnki$ and each variety $\Vnkjc$ with $j$ odd has a unique cover in the lattice ${\mathcal{L}}({\bf PS})$.
\end{cor}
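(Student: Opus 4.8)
The plan is to obtain the corollary from Proposition \ref{VnkiCompIrred} by a purely order-theoretic argument that is valid in any complete lattice: a completely $\cap$-irreducible element always has a unique cover. First I would record that ${\mathcal{L}}({\bf PS})$ is a complete lattice, since an arbitrary intersection of varieties of \pseudo s is again such a variety, so arbitrary meets exist and the top element is $\mathbf{PS}$ itself. I would also note that $\Vnki$ (and likewise $\Vnkjc$ with $j$ odd) is a \emph{proper} subvariety of $\mathbf{PS}$, because it is defined by the non-trivial identity \unkiv; consequently the collection of varieties strictly containing it is non-empty, as it contains $\mathbf{PS}$.

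Write $\mathbf{V}$ for $\Vnki$ and let $\mathcal{F}=\{\mathbf{W}\in{\mathcal{L}}({\bf PS})\mid \mathbf{V}\subsetneq\mathbf{W}\}$, so that $\mathcal{F}\neq\emptyset$, and set $\mathbf{V}^*=\bigcap_{\mathbf{W}\in\mathcal{F}}\mathbf{W}$. The key step is to show that $\mathbf{V}\subsetneq\mathbf{V}^*$. If instead $\mathbf{V}^*=\mathbf{V}$, then $\mathbf{V}$ would be the intersection of the family $\mathcal{F}$, every member of which strictly contains $\mathbf{V}$; by the complete $\cap$-irreducibility of $\mathbf{V}$ established in Proposition \ref{VnkiCompIrred}, this forces $\mathbf{V}=\mathbf{W}$ for some $\mathbf{W}\in\mathcal{F}$, contradicting $\mathbf{V}\subsetneq\mathbf{W}$. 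Hence $\mathbf{V}\subsetneq\mathbf{V}^*$.

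It then remains to verify that $\mathbf{V}^*$ is the unique cover of $\mathbf{V}$. For the covering property, any variety $\mathbf{U}$ with $\mathbf{V}\subsetneq\mathbf{U}\subseteq\mathbf{V}^*$ lies in $\mathcal{F}$, so $\mathbf{V}^*\subseteq\mathbf{U}$ by definition of $\mathbf{V}^*$, forcing $\mathbf{U}=\mathbf{V}^*$; thus nothing lies strictly between $\mathbf{V}$ and $\mathbf{V}^*$. For uniqueness, any cover $\mathbf{W}$ of $\mathbf{V}$ satisfies $\mathbf{V}\subsetneq\mathbf{W}$, hence $\mathbf{W}\in\mathcal{F}$ and $\mathbf{V}^*\subseteq\mathbf{W}$; since $\mathbf{V}\subsetneq\mathbf{V}^*\subseteq\mathbf{W}$ and $\mathbf{W}$ covers $\mathbf{V}$, we conclude $\mathbf{V}^*=\mathbf{W}$. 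The argument for $\Vnkjc$ with $j$ odd is identical, using the corresponding half of Proposition \ref{VnkiCompIrred}. I do not expect any genuine obstacle here: the entire substantive content, namely passing from an inclusion of an intersection to an inclusion by a single factor, is already packaged in Proposition \ref{VnkiCompIrred} (ultimately in Lemma \ref{singlepair}), and the only points requiring care are the non-emptiness of $\mathcal{F}$ and the precise invocation of complete $\cap$-irreducibility.
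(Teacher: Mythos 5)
Your proof is correct and follows essentially the same route as the paper: both take $\mathbf{V}^*$ to be the intersection of all varieties properly containing $\Vnki$, use the complete $\cap$-irreducibility from Proposition \ref{VnkiCompIrred} to see that this intersection properly contains $\Vnki$, and conclude it is the unique cover. Your write-up merely spells out the details (non-emptiness of the family, the covering property, uniqueness) that the paper leaves implicit.
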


\begin{proof}
We shall prove only the $\Vnki$ case since the other one is similar. Let $\bf V$ be the variety of \pseudo s obtained by intersecting all varieties of \pseudo s containing $\Vnki$ properly. Then $\Vnki$ is properly contained in $\bf V$ by Proposition \ref{VnkiCompIrred} and all varieties containing $\Vnki$ properly, contain also $\bf V$. We have proved this corollary.
\end{proof}

Consider the set
\begin{equation}\label{Varn}
\{\Vnki,\,\Vnkjc,\,\Vnkj\cap\Vnkjc\,\mid\;k\geq 1,\; 1\leq i,j,\leq 2n,\; j \mbox{ odd}\}
\end{equation}
of varieties of \pseudo s. By Corollary \ref{VarIdn}, this set is composed by all varieties of \pseudo s defined by sets of identities, all with 2-content $D_n$. This section has two main results. The first one will state that no two varieties from (\ref{Varn}) are the same. The second one will state that (\ref{Varn}) is also the set of all varieties of \pseudo s defined by a single identity with 2-content $D_n$. Thus, for the latter result, it will be enough to show that $\{\nkj,\,\nkjc\}$ with $j$ odd is equivalent to a single identity with 2-content $D_n$. Of course, this last identity will not belong to $I_n$ because of the first result.

The key ingredient to prove that no two varieties from (\ref{Varn}) are the same is to show that the pairs $\nkj$ and $\nkjc$ are incomparable for $j$ odd. But, to do so, we need to go back to \cite{l4} and recall Lemma 5.1 of that paper which states that the words $u_{n+1,k,2n+2}$ with $k\geq 1$ are \emph{isoterms} for the identity $u_{n,1,1}\approx v_{n,1,1}$ relative to $\bf PS$. In other words, if $u_{n+1,k,2n+2}\approx v$ is a consequence of $u_{n,1,1}\approx v_{n,1,1}$ (or equivalently if $(\al_{n+1,k,2n+2},\Te(v))$ is a consequence of $(\al_{n,1,1},\be_{n,1,1})$), then $u_{n+1,k,2n+2}\approx v$ is a trivial identity (or equivalently $\al_{n+1,k,2n+2}=\Te(v)$). Looking carefully to the proof of that lemma one realizes that the proof works for any word $u$ such that $\co_2(u)=D_m$ for $m>n$. In particular, we have the following result:

\begin{lem}\label{uisoterm}
For each $m>n\geq 2$, $k\geq 1$ and $1\leq i\leq 2m$, the word $u_{m,k,i}$ is an isoterm for the identity $u_{n,1,1}\approx v_{n,1,1}$.
\end{lem}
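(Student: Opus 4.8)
The plan is to prove the equivalent statement that the $\rho_{\bf V}$-class of $\al_{m,k,i}$ is the singleton $\{\al_{m,k,i}\}$, where $\bf V$ is the variety of \pseudo s defined by $u_{n,1,1}\approx v_{n,1,1}$; this is exactly the assertion that $u_{m,k,i}$ is an isoterm. The first reduction I would make uses that $\bf V$ contains $\bf SPS$ (its defining identity is one of the identities of $\bf SPS$), so $\rho_{\bf V}\subseteq\rho_{\bf SPS}$. Consequently any $\ga$ with $(\al_{m,k,i},\ga)\in\rho_{\bf V}$ satisfies $(\l(\ga),\co_2(\ga),\r(\ga))=(x_1,D_m,x_2)$; in particular every graph occurring in a derivation that starts from $\al_{m,k,i}$ has $2$-content exactly $D_m$. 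It therefore suffices to show that no single application of the generating pair $(\al_n,\be_n)=(\al_{n,1,1},\be_{n,1,1})$ can alter a graph of $2$-content $D_m$, i.e. that for every context $C$ and every endomorphism $\ol\varphi$ with $C[\al_n\ol\varphi]$ of $2$-content $D_m$ one has $C[\al_n\ol\varphi]=C[\be_n\ol\varphi]$ in $\A(X)$.

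The efficient route is to invoke Lemma 5.1 of \cite{l4}, which proves precisely this for the particular words $u_{n+1,k,2n+2}$. The point I would emphasize is that the argument there never uses the specific parameters $n+1$, $k$ or $2n+2$ beyond the single fact that $\co_2(u_{n+1,k,2n+2})=D_{n+1}$ with $n+1>n$. That proof analyses a putative rewriting $C[\al_n\ol\varphi]\mapsto C[\be_n\ol\varphi]$ inside a graph of $2$-content $D_m$ and shows, using the rigidity of such graphs for $m>n$, that the replacement leaves the reduced graph unchanged. The rigidity in question is exactly the $m$-version of Lemma \ref{uniqueword}: in a graph of $2$-content $D_m$ any two equally labelled, non-connected vertices lie at geodesic distance a multiple of $2m$, so the labels run with period exactly $2m$. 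Since the generating pair carries the period-$2n$ pattern of $D_n$ and $2m>2n$, the image $\al_n\ol\varphi$ cannot realise its characteristic period-$2n$ ``wrap'' inside a $D_m$-graph; the extra $x_{2n}$-vertex distinguishing $\be_n$ from $\al_n$ (attached at the left root along the edge recording the pair $(x_1,x_{2n})\in D_n$) is then, after $\ol\varphi$ and the ensuing reduction, a duplicate of already-present adjacent structure and folds away by an edge-folding. Hence $C[\al_n\ol\varphi]=C[\be_n\ol\varphi]$, and the step is trivial.

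Since $\co_2(u_{m,k,i})=D_m$ with $m>n$ for every $k\geq 1$ and $1\leq i\leq 2m$, the word $u_{m,k,i}$ satisfies the sole hypothesis the argument requires, and the isoterm property follows at once. The main obstacle is not conceptual but verificational: one must re-read the proof of \cite[Lemma 5.1]{l4} step by step and confirm that each combinatorial move depends only on the hypothesis $\co_2=D_m$ with $m>n$ (equivalently, on the period-$2m$ rigidity of Lemma \ref{uniqueword}) and never on the concrete shape of the isoterm word. The delicate part of that verification is the folding claim of the previous paragraph, namely that within any $D_m$-graph the discrepancy between $\al_n\ol\varphi$ and $\be_n\ol\varphi$ is invariably absorbed by reduction; making this precise is exactly what legitimises the passage from the single family $u_{n+1,k,2n+2}$ to all the words $u_{m,k,i}$.
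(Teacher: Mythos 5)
Your proposal is correct and matches the paper's own argument: the paper likewise proves this lemma purely by invoking Lemma 5.1 of \cite{l4} and observing that its proof uses nothing about the words $u_{n+1,k,2n+2}$ beyond the fact that their 2-content is $D_m$ with $m>n$. Your additional sketch of the period-$2m$ rigidity (the $m$-version of Lemma \ref{uniqueword}) and the folding mechanism is a faithful elaboration of why that inspection succeeds, not a different route.
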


Although the previous lemma will be used only in the next section, it is similar to the result that we need to prove that $\nki$ and $\nkic$ are incomparable if $i$ odd:
$u_{n,k,i}^*$ is an isoterm for \unkiv\ if $i$ odd. The proof of this last result follows the same strategy used in Lemma 5.1 of \cite{l4} although it is more complex. We begin with the following auxiliary lemma.

\begin{lem}\label{unkivarphi}
Let $\varphi$ be an endomorphism of $F_2(X)$ such that $\co_2(u_{n,k,i}\varphi)\subseteq D_n^*$. Then $u_{n,k,i}\varphi\approx v_{n,k,i}\varphi$ is a trivial identity or $\sk(u_{n,k,i},\varphi)\in\A(X)$ with $\co_2(\sk(u_{n,k,i}, \varphi))=D_n^*$.
\end{lem}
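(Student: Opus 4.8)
The plan is to read everything off the label sequence of the skeleton, viewing it as a relabelled copy of the zig-zag $\al_{n,k,i}$. First I would exploit that the left and right contents $\co_l(u_{n,k,i})=\{x_1,x_3,\dots,x_{2n-1}\}$ and $\co_r(u_{n,k,i})=\{x_2,x_4,\dots,x_{2n}\}$ are disjoint. By the remark preceding Corollary \ref{deltafullyinvariant}, this lets me identify $\sk(u_{n,k,i},\varphi)$ with $\De(u_{n,k,i}\varphi')$ for the endomorphism $\varphi'$ sending each left letter $x_{2j-1}$ to the single letter $\l(x_{2j-1}\varphi)$ and each right letter $x_{2j}$ to $\r(x_{2j}\varphi)$. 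Since $\varphi'$ sends letters to letters, $\sk(u_{n,k,i},\varphi)$ is just the zig-zag underlying $\al_{n,k,i}$ with each vertex relabelled through $\varphi'$; in particular it is a path on $2nk+i$ vertices whose label sequence is periodic of period $2n$, and (as all diagonal pairs lie in $D_n^*$) its labels all lie in $\{x_1,\dots,x_{2n}\}$.

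Next I would record the easy inclusion $\co_2(\sk(u_{n,k,i},\varphi))\subseteq D_n^*$. Indeed the skeleton is a labelled subtree of $\De(u_{n,k,i}\varphi)$: its vertices are the roots of the replacement trees and carry the labels $\l(\cb_a\varphi)$, $\r(\cb_b\varphi)$ they already have in $\De(u_{n,k,i}\varphi)$, and its edges are edges of $\De(u_{n,k,i}\varphi)$. Hence every pair it contributes already lies in $\co_2(u_{n,k,i}\varphi)\subseteq D_n^*$. With this inclusion in hand the lemma reduces to a dichotomy: if the relabelled zig-zag is reduced and realises all of $D_n^*$ we are in the second alternative, and otherwise I must prove that the identity $u_{n,k,i}\varphi\approx v_{n,k,i}\varphi$ is trivial.

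For the second alternative, note that $\co_2(\sk)=D_n^*$ forces the $2n$ relabelling values (the $\l(x_{2j-1}\varphi)$ and $\r(x_{2j}\varphi)$) to be pairwise distinct and to reproduce, edge by edge, the dual adjacency pattern of $D_n^*$. Since $2nk+i>2n$, the dual form of Lemma \ref{uniqueword} then shows that no two neighbours of a common vertex can share a label and no non-essential end-thorn can appear, so no edge-folding or thorn deletion is available; thus the skeleton is already reduced, i.e. lies in $\A(X)$.

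The real work is the degenerate case, where some coincidence among these $2n$ values occurs. Then either two neighbours of an internal vertex carry equal labels (an available edge-folding) or the far endpoint becomes a non-essential thorn. Because the label sequence has period $2n$, any such coincidence recurs periodically along the whole path, so a single folding triggers a cascade collapsing the long zig-zag onto a strictly shorter graph. Here I would run the reductions of $\De(u_{n,k,i}\varphi)$ and of $\De(v_{n,k,i}\varphi)$ in parallel, recalling that $\be_{n,k,i}$ differs from $\al_{n,k,i}$ only by the extra right vertex labelled $x_{2n}$ attached to the left root, and track that single vertex of $\be_{n,k,i}\setminus\al_{n,k,i}$ under $\ol{\varphi}$. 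Since $\be_{n,k,i}\le\al_{n,k,i}$ gives $\Te(v_{n,k,i}\varphi)\le\Te(u_{n,k,i}\varphi)$ automatically (Corollary \ref{deltafullyinvariant} and Proposition \ref{relA}(3)), it suffices to show the reverse: the same cascade that shortens the skeleton also identifies the extra vertex with a vertex already present, so the two reduced forms coincide, which by Proposition \ref{relA}(3) is exactly the triviality of the identity. The main obstacle is precisely this bookkeeping — showing that \emph{every} pattern of coincidences spoiling reducedness or shrinking the $2$-content below $D_n^*$ is forced, by the $2n$-periodicity together with the $D_n^*$-rigidity of Lemma \ref{uniqueword}, to absorb the extra vertex. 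This is the step that makes the argument more intricate than Lemma~5.1 of \cite{l4}, and I would organise it by isolating the first position along the path at which a coincidence occurs and propagating the folding in both directions from there.
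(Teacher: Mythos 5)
Your setup is sound and matches the paper's: the skeleton is the zig-zag $\al_{n,k,i}$ relabelled through $\varphi'$, its $2$-content is contained in $D_n^*$, and your third paragraph correctly disposes of the non-degenerate case (if $\co_2(\sk)=D_n^*$ then the $2n$ edge pairs must be pairwise distinct and exhaust the cycle structure of $D_n^*$, forcing injective labels over each period and hence reducedness). The problem is the degenerate case, which you yourself flag as ``the main obstacle'' and then leave as a plan rather than a proof. What actually has to be shown is very specific: whenever the skeleton is not reduced, the sequence of edge-foldings identifies its two end vertices --- the image of the left root $a_1$ and the image of the far end $a_{2nk+1}$ (both carrying the label $\l(x_1\varphi)$). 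Only this identification explains why the extra tree $\De(x_{2n}\varphi)^R$ attached at $a_1$ in $\De(v_{n,k,i}\varphi)$ ends up hanging at the same vertex as the copy of $\De(x_{2n}\varphi)^R$ already attached at $a_{2nk+1}$ inside $\De(u_{n,k,i}\varphi)$, so that the two copies fold together and the reduced forms coincide. Your proposal asserts that ``the same cascade \dots\ also identifies the extra vertex with a vertex already present'' but gives no argument for it; merely knowing that one folding ``recurs periodically'' and that the graph collapses to something strictly shorter does not imply the two ends merge, and your suggested organisation (isolate the first coincidence, propagate foldings outward) is not obviously workable, since foldings create vertices of higher degree and pendant subtrees on which a purely local propagation can stall.

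The paper closes exactly this gap with a global distance argument, not a propagation argument. After reducing to $k=1$, $i=1$ (legitimate because the skeleton labels are $2n$-periodic and $\al_{n,k,i}=\al_{n,1,1}\ol{\chi}$ for an explicit endomorphism $\chi$), it assumes the ends $a$, $b$ of the skeleton have distinct images in the edge-folded graph $\wt{\al}$ and derives a contradiction: any folding in a path drops the distance between the images of the ends by at least $2$, so the geodesic joining them in $\wt{\al}$ has at most $2n-1$ vertices; on the other hand that geodesic is itself a reduced graph with $2$-content in $D_n^*$ containing two distinct vertices with the same label, and the dual of Lemma \ref{uniqueword} (whose forced-label argument works under $\co_2\subseteq D_n^*$) requires such a geodesic to have at least $2n+1$ vertices. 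This needs a case split on the parity of the index of $\l(x_1\varphi)$: when it is odd, the edges at both ends are forced to be diagonal, and one first merges the two neighbours $c$, $d$ of $a$, $b$ and then folds $a$ onto $b$. Without this step (or a genuine substitute for it), your argument establishes only the easy inequality $\Te(v_{n,k,i}\varphi)\le\Te(u_{n,k,i}\varphi)$, and the lemma remains unproved.
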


\begin{proof}
We show first that we can consider $k=1$ and $i=1$ and prove only this case. Observe that $\sk(u_{n,k,i},\varphi)= \De(u_{n,k,i} \psi)$ for $\psi$ an endomorphism of $F_2(X)$ such that $x_i\psi=\l(x_i\varphi)$ if $i$ odd and $x_i\psi= \r(x_i\varphi)$ if $i$ even. Thus $\sk(u_{n,k,i},\varphi)$ is just the graph $\al_{n,k,i}$ but with each label $x_i$ changed to $x_i\psi$. Since the labels in the graphs $\sk(u_{n,k,i},\varphi)$ are `periodic', we immediately conclude that $\sk(u_{n,k,i}, \varphi)$ is reduced if and only if $\sk(u_{n,1,1}, \varphi)$ is reduced too, and in this case both $\co_2 (\sk(u_{n,k,i},\varphi))$ and $\co_2(\sk(u_{n,1,1}, \varphi))$ are $D_n^*$. On the other hand, by Proposition \ref{nki_nlj}, if $u_{n,1,1}\varphi\approx v_{n,1,1} \varphi$ is a trivial identity, then so is $u_{n,k,i} \varphi\approx v_{n,k,i}\varphi$ since $u_{n,k,i}\approx v_{n,k,i}$ is a consequence of $u_{n,1,1}\approx v_{n,1,1}$. Summing up, we can assume that $k=1$ and $i=1$ and prove this result only for this case.

Let $\al$ be the subtree $\sk(u_{n,1,1},\varphi)= \De(u_{n,1,1}\psi)$ of $\De(u_{n,1,1}\varphi)$ and assume that $\al$ is not reduced. We need to prove that $u_{n,1,1}\varphi\approx v_{n,1,1}\varphi$ is a trivial identity. Let $a=\lv_{\al}$ and $b$ be the two vertices of degree 1 of $\al$ (and of $\al_{n,1,1}$); then $\cb_a=\cb_b=x_1\psi$ in $\al$. We shall prove first that $a$ and $b$ merge into a single vertex in $\wt{\al}$, that is, we can identify $a$ with $b$ by applying a sequence of edge-foldings to $\al$. Let $x_i=x_1\psi$. We shall consider two cases: $i$ odd and $i$ even. 

We begin assuming $i$ even. If $\wt{\al}=\al$ then $\al$ must have a non-essential thorn since it is not reduced; but the only candidate to a non-essential thorn is $\{(b,c),b\}$ where $c$ is the only vertex connected to $b$ by an edge. Let $d$ be the other vertex connected by an edge to $c$ in $\al$. Since $i$ is even and $\co_2(\al)\subseteq D_n^*$, $\cb_d=\cb_c=\cb_b=x_i$ and we can merge $d$ with $b$ by an edge-folding, whence $\wt{\al}\neq\al$ which contradicts our assumption. Thereby, we can assume that $\wt{\al}\neq \al$. Let $\be$ be the geodesic path from $a$ to $b$ in $\wt{\al}$. Then $\be$ has at most $2n-1$ vertices. If $\be$ has more than one vertex, then fix $a$ and the other vertex connected to $a$ by an edge in $\be$ as the distinguished vertices of $\be$ ($a$ is obviously the left root). Then $\be$ belongs to $\B'(X)$ and is reduced for edge-foldings. Since $i$ is even we can conclude as above that $\be$ has no non-essential thorn, whence $\be\in\A(X)$. Finally, by the dual of Lemma \ref{uniqueword}, any graph $\ga\in\A(X)$ with $\co_2(\ga)=D_n^*$ and with two distinct vertices with the same label must have at least 2n+1 vertices in the geodesic path connecting those two vertices. In other words, $\be$ must have at least $2n+1$ vertices which is not the case. We can now conclude that $\be$ has only one vertex and therefore $a$ and $b$ are identified by a sequence of edge-foldings applied to $\al$. 

Assume now that $x_1\psi=x_i$ with $i$ odd, and let $c$ and $d$ be the only two vertices connected respectively to $a$ and $b$ in $\al$. Then $\cb_d=\cb_c=\cb_b=\cb_a=x_i$ since $\co_2(\al)\subseteq D_n^*$. An argumentation similar to the one applied in the previous paragraph allows us to conclude that $c$ and $d$ can be merged together by applying a sequence of edge-foldings to $\al$. Thus we can identify $a$ with $b$ by applying one more edge-folding.

Summing up the two previous paragraphs, we proved that we can merge together the two vertices $a$ and $b$ by applying a sequence of edge-foldings to $\al$. Since $\al$ is a subgraph of $\De(v_{n,1,1}\varphi)$, we can apply that same sequence of edge-foldings to $\De(v_{n,1,1}\varphi)$ and merge together the vertices $a$ and $b$ also in $\De(v_{n,1,1}\varphi)$. Observe now that we have two copies of $x_{2n}\varphi$ attached to the vertex that results from merging together $a$ and $b$. These two copies of $x_{2n}\varphi$ can be reduced to a single copy of $x_{2n}\varphi$ by another sequence of edge-foldings. Finally, we just have to observe that this last graph (the one resulting from reducing the two copies of $x_{2n}\varphi$ to a single copy) is the graph obtained from $\De(u_{n,1,1}\varphi)$ by applying the sequence of edge-foldings mentioned above that merge together $a$ and $b$. Therefore $\ol{\De(v_{n,1,1}\varphi)}=\ol{\De (u_{n,1,1}\varphi)}$ and $u_{n,1,1}\varphi\approx v_{n,1,1}\varphi$ is a trivial identity.
\end{proof}

\begin{lem}\label{unki*iso}
Let $k\geq 1$ and $1\leq i\leq 2n$. If $i$ odd, then $u_{n,k,i}^*$ is an isoterm for the identity \unkiv\ (relative to {\bf PS}).
\end{lem}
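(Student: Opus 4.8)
The plan is to show that $\anki^*$ is alone in its class under the fully invariant congruence generated by $\nki$ on $\A(X)$; equivalently, that whenever $u_{n,k,i}^*\approx w$ is a consequence of \unkiv, one has $\Te(w)=\anki^*$. I would write such a consequence as a rewriting chain $u_{n,k,i}^*=w_0,w_1,\dots,w_l=w$, where each $w_t$ arises from $w_{t-1}$ by replacing an occurrence of $u_{n,k,i}\varphi$ by $v_{n,k,i}\varphi$ or conversely, and prove by induction on $t$ that $\Te(w_t)=\anki^*$. The induction is powered by two invariance facts: the leftmost/rightmost letters and the $2$-content are preserved along the chain because \unkiv\ is satisfied by all \spseudo s, so every $w_t$ has $\co_2(w_t)=\co_2(\anki^*)=D_n^*$; and the $2$-content of any subword is contained in that of the whole word. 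Consequently, at each step the rewriting subword $u_{n,k,i}\varphi$ (present in $w_{t-1}$ in the forward direction, or in $w_t$ in the backward direction, since $\co_2(w_t)=\co_2(w_{t-1})$) satisfies $\co_2(u_{n,k,i}\varphi)\subseteq D_n^*$, which is exactly the hypothesis of Lemma~\ref{unkivarphi}.

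First I would dispose of the easy alternative of Lemma~\ref{unkivarphi}: if $u_{n,k,i}\varphi\approx v_{n,k,i}\varphi$ is a trivial identity then $\Te(u_{n,k,i}\varphi)=\Te(v_{n,k,i}\varphi)$, and since $\Te\colon F_2(X)\to\A(X)$ is a homomorphism its kernel is a congruence, so the replacement does not change the reduced graph and $\Te(w_t)=\Te(w_{t-1})=\anki^*$. The whole difficulty is therefore concentrated in the other alternative, where $\sk(u_{n,k,i},\varphi)$ is already reduced with $2$-content $D_n^*$. Here I would first identify this skeleton: it carries the underlying zig-zag of $\anki$, hence has exactly $2nk+i$ vertices, and being reduced with $2$-content $D_n^*$ its labels are completely determined once the two distinguished vertices and their labels are fixed (by the remark preceding Lemma~\ref{uniqueword} and its dual, together with the geodesic constraints in Lemma~\ref{uniqueword}); up to the relocation of distinguished vertices granted by Proposition~\ref{DistVert}, the skeleton is $\anki^*$ itself. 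Since this reduced copy of $\anki^*$ sits as a subtree of $\De(w_{t-1})$ while $\Te(w_{t-1})=\ol{\De(w_{t-1})}=\anki^*$ has the same number of vertices, the reduction of $\De(w_{t-1})$ must carry the skeleton isomorphically onto $\anki^*$ and fold every vertex lying outside the skeleton into it.

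The main obstacle is to show that, in this full-skeleton situation, replacing $u_{n,k,i}\varphi$ by $v_{n,k,i}\varphi$ still leaves the reduced graph equal to $\anki^*$, so that even this step is graph-trivial and the induction goes through. Passing from $u_{n,k,i}$ to $v_{n,k,i}$ attaches one extra vertex (coming from $x_{2n}$) at the left end of the skeleton. Although the isolated graph $\bnki^*$ is reduced, inside $w_{t-1}$ the skeleton already exhausts all $2nk+i$ vertices of the reduced form and is surrounded by material that folds into it; the crux is to prove that this surrounding material supplies a parallel edge along which the new vertex is identified with an already-present vertex, so that $\ol{\De(w_t)}=\anki^*$ is unchanged. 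This is exactly the kind of vertex-merging carried out at the end of the proof of Lemma~\ref{unkivarphi}, and the hypothesis that $i$ is \emph{odd} is what makes it available: the parity of $i$ fixes whether the neighbour of the left root carries the label needed to trigger the fold, and for $i$ odd it does whereas for $i$ even it does not --- consistently with Proposition~\ref{CompDualPEven}, which shows that for even $i$ the term $u_{n,k,i}^*$ genuinely fails to be an isoterm.

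I expect the laborious part to be the bookkeeping of these foldings in an \emph{arbitrary} surrounding context $w_{t-1}$ rather than for the isolated words, and in particular verifying that the backward rewriting direction (replacing $v_{n,k,i}\varphi$ by $u_{n,k,i}\varphi$) is handled symmetrically: there one applies Lemma~\ref{unkivarphi} to the copy of $u_{n,k,i}\varphi$ now present in $w_t$, using again that $\co_2(w_t)=\co_2(w_{t-1})=D_n^*$. Once the full-skeleton step is shown to be graph-trivial in both directions, the induction closes and $u_{n,k,i}^*$ is an isoterm for \unkiv.
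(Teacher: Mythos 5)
Your overall frame coincides with the paper's: reduce the isoterm claim to a single rewriting step, observe that the rewritten subword $\unki\varphi$ must satisfy $\co_2(\unki\varphi)\subseteq D_n^*$, and invoke Lemma~\ref{unkivarphi} to split into the trivial-identity alternative (harmless, as you say) and the hard alternative where $\al=\sk(\unki,\varphi)$ is reduced with $\co_2(\al)=D_n^*$. The gap is in how you treat the hard alternative. You claim that the skeleton ``is $\anki^*$ itself'' up to relocating the distinguished vertices, and then set out to show that the rewriting step is nevertheless graph-trivial because the surrounding context supplies foldings. The identification is false: by the paper's description of $\sk(u,\psi)$, the skeleton keeps the underlying bipartite structure of $\anki$ --- its left vertices are exactly the left vertices of $\De(\unki)$ --- so for $i$ odd it is a path whose two endpoints are both \emph{left} vertices, having $nk+(i+1)/2$ left vertices; whereas $\anki^*$ is a path whose two endpoints are both \emph{right} vertices, having only $nk+(i-1)/2$ left vertices. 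The left/right partition is intrinsic, and Proposition~\ref{DistVert} only relocates roots; it cannot convert one bipartite structure into the other.

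This mismatch is not a nuisance to be worked around --- it \emph{is} the proof. The paper notes that $\al$, being reduced and thorn-free, would have to sit as a subgraph of $\ol{\De(u)}=\anki^*$ (it is a subgraph of $\De(\unki\varphi)$, hence of $\De(u)$, and survives reduction), and the left-vertex count $nk+(i+1)/2$ versus $nk+(i-1)/2$ makes that impossible. So for $i$ odd the hard alternative of Lemma~\ref{unkivarphi} simply never occurs, every step is trivial, and the lemma follows. Your route instead accepts the hard alternative as possible and then needs the unproved claim that an \emph{arbitrary} surrounding context provides a parallel edge folding the extra $x_{2n}$-vertex back into the skeleton; no argument is given for this, and none is available --- if the hard case could genuinely occur, nothing about the context would force such a fold, and the identity could fail to be an isoterm. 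Relatedly, the role you assign to the parity of $i$ (enabling a fold at the neighbour of the left root) is not where parity actually enters: it enters only through the vertex count, which is off by one exactly when $i$ is odd and balances when $i$ is even --- the latter being consistent with Proposition~\ref{CompDualPEven}, where $\nki$ and $\nkic$ are equivalent.
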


\begin{proof}
Let $u\in F_2(X)$ such that $u_{n,k,i}^*\approx u$ is a trivial identity and let $\varphi$ be an endomorphism of $F_2(X)$ such that $u_{n,k,i}\varphi$ or $v_{n,k,i}\varphi$ is a subword of $u$. In particular, $u_{n,k,i}\varphi$ is always a subword of $u$. Let $v$ be the word obtained from $u$ by replacing the subword $u_{n,k,i}\varphi$ or $v_{n,k,i}\varphi$ with respectively $v_{n,k,i}\varphi$ or $u_{n,k,i}\varphi$. This result becomes proved once we show that $u\approx v$ is a trivial identity. 

Since $u_{n,k,i}\varphi$ is a subword of $u$ and $u_{n,k,i}^*\approx u$ is a trivial identity, we must have $\co_2(u_{n,k,i}\varphi)\subseteq D_n^*$. Let $\al=\sk(u_{n,k,i},\varphi)$. Now,
by Lemma \ref{unkivarphi}, $u_{n,k,i}\varphi\approx v_{n,k,i}\varphi$ is a trivial identity or $\al\in\A(X)$ with $\co_2(\al)=D_n^*$. If we show that the former case must occur, then $u\approx v$ is also a trivial identity and we are done. So, assume otherwise that $\al\in\A(X)$ with $\co_2(\al)=D_n^*$. Since $\al$ is a subgraph of $\De(u_{n,k,i}\varphi)$ which in turn is a subgraph of $\De(u)$, we conclude that $\al$ is a subgraph of $\De(u)$. Thus $\al=\wt{\al}$ is a subgraph of $\wt{\De(u)}$. But note that $\al$ has no essential thorn either, and so $\al$ is in fact a subgraph of $\ol{\De(u)}=\anki^*$. Finally, we get a contradiction by counting the number of left vertices of $\al$ and $\anki^*$: $\anki^*$ has one less left vertex than $\al$ since $i$ is odd. Hence $\al$ cannot be a subgraph of $\anki^*$. Consequently, $u_{n,k,i}\varphi \approx v_{n,k,i}\varphi$ is a trivial identity, and we have proved this lemma.
\end{proof}

We have now all the tools we need to prove that the pairs $\nki$ and $\nkic$ are incomparable for $i$ odd.

\begin{prop}\label{CompDualPOdd}
The pairs $\nki$ and $\nkic$ are incomparable if $i$ odd.
\end{prop}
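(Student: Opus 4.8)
The plan is to prove that $\nki$ and $\nkic$ are incomparable (for $i$ odd) by showing neither pair is a consequence of the other, using the isoterm result of Lemma~\ref{unki*iso} and its dual. Recall that a pair $(\al,\be)$ being a consequence of $(\al',\be')$ is, by definition, equivalent to the identity corresponding to $(\al,\be)$ being a consequence of the identity corresponding to $(\al',\be')$. So I reformulate the statement in terms of identities: I must show that \unkicv\ is \emph{not} a consequence of \unkiv, and dually that \unkiv\ is not a consequence of \unkicv.

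First I would handle one direction. Suppose, for contradiction, that \unkicv\ is a consequence of \unkiv. By Lemma~\ref{unki*iso}, since $i$ is odd, the word $u_{n,k,i}^*$ is an isoterm for \unkiv\ relative to $\bf PS$. This means that whenever $u_{n,k,i}^*\approx w$ is a consequence of \unkiv, the identity $u_{n,k,i}^*\approx w$ must be trivial, i.e.\ $\Te(u_{n,k,i}^*)=\Te(w)$, which translates to $\al_{n,k,i}^*=\Te(w)$. But \unkicv\ is precisely the identity $u_{n,k,i}^*\approx v_{n,k,i}^*$ with $\De(v_{n,k,i}^*)=\bnki^*$, and since $(\al_{n,k,i}^*,\be_{n,k,i}^*)$ is a genuine elementary pair with $\be_{n,k,i}^*<\al_{n,k,i}^*$, the identity \unkicv\ is \emph{non-trivial}. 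Thus if \unkicv\ were a consequence of \unkiv\ we would have $\al_{n,k,i}^*=\bnki^*$, contradicting $\bnki^*<\al_{n,k,i}^*$. Hence \unkicv\ is not a consequence of \unkiv, so $\nkic$ is not a consequence of $\nki$.

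The other direction follows by the left--right duality that pervades the whole construction. The dual of Lemma~\ref{unki*iso} asserts that, for $i$ odd, the word $u_{n,k,i}$ is an isoterm for the dual identity \unkicv\ relative to $\bf PS$ (this is the statement obtained by applying the order-reversing automorphism $\ol{\tau}$ that swaps the roles of $D_n$ and $D_n^*$, exactly as $\tau$ was used in the passage preceding Proposition~\ref{FinSubIn'}). Repeating the argument of the previous paragraph with the roles of starred and unstarred pairs interchanged, I conclude that \unkiv\ is not a consequence of \unkicv, i.e.\ $\nki$ is not a consequence of $\nkic$. Since neither pair is a consequence of the other, the two pairs are incomparable, as required.

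The main obstacle, and the point that deserves care in writing, is making the isoterm property do exactly the work claimed. The subtle step is the translation ``$(\al',\be')$ is a consequence of $(\al,\be)$'' into a statement about identities and isoterms: one must argue that if $u_{n,k,i}^*\approx v_{n,k,i}^*$ were a consequence of $u_{n,k,i}\approx v_{n,k,i}$, then in particular there is a derivation realizing $u_{n,k,i}^*\approx v_{n,k,i}^*$, and the isoterm property forbids $u_{n,k,i}^*$ from being altered non-trivially by any such derivation, forcing the two sides to have equal images under $\Te$. The counting argument at the end of Lemma~\ref{unki*iso} (that $\anki^*$ has one fewer left vertex than $\sk(u_{n,k,i},\varphi)$ precisely because $i$ is odd) is what makes the odd case genuinely different from the even case of Proposition~\ref{CompDualPEven}, and I would make explicit that this parity obstruction is exactly why the two pairs fail to collapse. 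Everything else is a routine invocation of the already-established definitions of consequence and triviality of identities.
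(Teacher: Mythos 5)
Your proposal is correct and takes essentially the same approach as the paper: the paper's proof likewise deduces that $\nkic$ is not a consequence of $\nki$ directly from Lemma \ref{unki*iso}, and obtains the converse direction from the dual of that lemma. Your write-up merely makes explicit the step the paper leaves implicit, namely that the non-triviality of the elementary pair $\nkic$ (i.e.\ $\bnki^*<\anki^*$) contradicts the triviality forced by the isoterm property.
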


\begin{proof}
Assume $i$ odd. By the last lemma the identity $\unki^*\approx \vnki^*$ is not a consequence of \unkiv, that is, $\nkic$ is not a consequence of $\nki$. By symmetry (using the dual version of the previous lemma), neither $\nki$ is a consequence of $\nkic$, whence these two pairs are incomparable.
\end{proof}

We can now prove that no two varieties from the list (\ref{Varn}) are the same.

\begin{prop}\label{PairsIn}
The varieties listed in (\ref{Varn}) are all pairwise distinct varieties. Further, (\ref{Varn}) lists all varieties defined by sets of identities, all with 2-content $D_n$.
\end{prop}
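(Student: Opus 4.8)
The statement has two parts, and the second one requires almost no new work. The assertion that (\ref{Varn}) lists \emph{every} variety defined by a set of identities all of 2-content $D_n$ is exactly the content of Corollary \ref{VarIdn}: that corollary says such a variety is defined by a single identity $u_{n,k,i}\approx v_{n,k,i}$, or by a single $u_{n,k,i}^*\approx v_{n,k,i}^*$ with $i$ odd, or by the two-element set $\{u_{n,k,i}\approx v_{n,k,i},\,u_{n,k,i}^*\approx v_{n,k,i}^*\}$ with $i$ odd, and these three possibilities are precisely the varieties $\Vnki$, $\Vnkjc$ ($j$ odd) and $\Vnkj\cap\Vnkjc$ ($j$ odd) appearing in (\ref{Varn}). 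So the whole problem is to prove that the varieties in (\ref{Varn}) are pairwise distinct, and I would argue this entirely inside the lattice ${\mathcal L}({\bf PS})$, feeding on the ordering and incomparability facts already established.

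First I would assemble the scaffolding. By Proposition \ref{nki_nlj} the varieties $\Vnki$ form a chain that is weakly increasing in the ``size'' $2nk+i$ of $\anki$, and dually the $\Vnkic$ form a weakly increasing chain; by Proposition \ref{CompDualPEven} one has $\Vnki=\Vnkic$ whenever $i$ is even; and by Proposition \ref{CompDualPOdd} the two varieties $\Vnki$ and $\Vnkic$ are incomparable whenever $i$ is odd. The first key step is to upgrade the weak chain to a strict one, for which it suffices to show that consecutive members differ. Suppose $\Vnki={\bf V}_{n,k,i+1}$ (the wrap-around ${\bf V}_{n,k,2n}={\bf V}_{n,k+1,1}$ being handled the same way). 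If $i$ is odd then $i+1$ is even, so ${\bf V}^*_{n,k,i+1}={\bf V}_{n,k,i+1}$, and combining this with the starred-chain inclusion $\Vnkic\subseteq{\bf V}^*_{n,k,i+1}$ and the assumed equality gives $\Vnkic\subseteq\Vnki$, contradicting their incomparability. If $i$ is even then $\Vnki=\Vnkic\subseteq{\bf V}^*_{n,k,i+1}$, and the assumed equality yields ${\bf V}_{n,k,i+1}\subseteq{\bf V}^*_{n,k,i+1}$ with $i+1$ odd, again contradicting incomparability. Hence the unstarred chain is strict, and by the left--right duality of ${\mathcal L}({\bf PS})$ so is the starred chain; in particular two members of one chain coincide only when their indices coincide.

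Next I would compare a Type-A variety $\Vnki$ with a Type-B variety ${\bf V}^*_{n,l,j}$ (both $i,j$ odd). Comparing the sizes $2nk+i$ and $2nl+j$: if they are equal then $(k,i)=(l,j)$ and $\Vnki={\bf V}^*_{n,k,i}$ contradicts incomparability; if $2nk+i<2nl+j$ then the starred chain gives $\Vnkic\subseteq{\bf V}^*_{n,l,j}=\Vnki$, i.e.\ $\Vnkic\subseteq\Vnki$, which is impossible; and if $2nk+i>2nl+j$ then the unstarred chain gives ${\bf V}_{n,l,j}\subseteq\Vnki={\bf V}^*_{n,l,j}$, contradicting incomparability at the odd level $j$. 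Finally, all comparisons involving a Type-C variety reduce to the previous ones via the complete meet-primeness supplied by Proposition \ref{VnkiCompIrred}: an equality of $\Vnki$ or $\Vnkjc$ with an intersection ${\bf V}_{n,l,j}\cap{\bf V}^*_{n,l,j}$ would force it to equal one of the two factors, which is ruled out by the $A$/$B$ comparisons together with the fact that ${\bf V}_{n,l,j}\cap{\bf V}^*_{n,l,j}$ is strictly smaller than each factor (again incomparability at the odd level $j$). Two Type-C varieties are separated similarly: assuming the one of smaller size is contained both in the other and in the first factor ${\bf V}_{n,k,j}$ of the smaller one, meet-primeness of ${\bf V}_{n,k,j}$ forces either an inclusion contradicting strictness of the unstarred chain or, via the starred chain, an inclusion contradicting incomparability.

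The genuinely hard input is not in this proposition but behind it: the incomparability of $\nkj$ and $\nkjc$ for $j$ odd (Proposition \ref{CompDualPOdd}), which itself rests on the isoterm Lemma \ref{unki*iso}. Once incomparability is in hand, the present statement is a finite, essentially formal, argument in the lattice. The only point I expect to need genuine care is to confirm that the left--right dual automorphism of ${\mathcal L}({\bf PS})$ really interchanges $\Vnki$ with $\Vnkic$, so that the ``dually'' steps (strictness of the starred chain and the $B$-side comparisons) are legitimate; this is where I would be most cautious, since an index shift under the duality would derail the size bookkeeping used throughout.
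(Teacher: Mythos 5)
Your proof is correct and follows essentially the same route as the paper: the second part is read off from Corollary \ref{VarIdn}, and the first part is the same purely lattice-theoretic deduction from Propositions \ref{nki_nlj}, \ref{CompDualPEven}, \ref{CompDualPOdd} and \ref{VnkiCompIrred} that the paper itself uses, merely packaged there as an ascending chain of ``diamonds'' $U_{2nk+i}=\{\Vnki,\,\Vnkic,\,\Vnki\cap\Vnkic,\,{\bf V}_{n,k,i+1}\}$ ($i$ odd) rather than as your two strict chains plus type-by-type case analysis. Your one flagged worry (the left--right duality of ${\mathcal{L}}({\bf PS})$) can be discharged without invoking any duality automorphism at all: the starred halves of Proposition \ref{nki_nlj} together with Propositions \ref{CompDualPEven} and \ref{CompDualPOdd} are already stated symmetrically, so your step-A argument reruns verbatim on the starred chain, with no index shift to worry about.
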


\begin{proof}
The second part is just a reformulation of Corollary \ref{VarIdn}. Thus we only need to prove the first part.
Let $k\geq 1$ and $i\in\{1,\cdots, 2n\}$ odd, and set \[U_{2nk+i}=\{\Vnki,\Vnki^*,\Vnki\cap\Vnki^*, {\bf V}_{n,k,i+1}\}\, .\]
Then the list (\ref{Varn}) is the union of all sets $U_{2nk+i}$. The varieties from each $U_{2nk+i}$ are pairwise distinct by Propositions \ref{nki_nlj}, \ref{CompDualPEven} and \ref{CompDualPOdd}, and the inclusion relation between them is given by the following scheme:

\centerline{
\tikz[baseline=45pt,scale=0.5]{
\node[font=\small] (2) at (0,0) {${\bf V}_{n,k,i}\cap {\bf V}_{n,k,i}^*$};
\node[font=\small] (3) at (-2.5,2.5) {${\bf V}_{n,k,i}$};
\node[font=\small] (4) at (2.5,2.5) {${\bf V}_{n,k,i}^*$};
\node[font=\small] (1) at (0,5) {${\bf V}_{n,k,i+1}$};
\path (2) edge (3);
\path (3) edge (1);
\path (4) edge (1);
\path (2) edge (4);}
}

\noindent Furthermore, by these same results and Proposition \ref{VnkiCompIrred}, if $2nk+i<2nl+j$ with $l\geq 1$ and $j\in\{1,\cdots, 2n\}$ odd, then any variety from $U_{2nk+i}$ is properly contained in any variety from $U_{2nl+j}$; and we have shown this result.
\end{proof}

In the previous proof we have shown more than what it is stated in the result. We have proved that the list (\ref{Varn}), under the inclusion relation, constitutes an infinite ascending `chain of diamonds' like the one depicted in the proof above. The following corollary is now obvious.

\begin{cor}\label{nki_nlj_iff}
Let $k,l\geq 1$ and $i,j\in\{1,\cdots,2n\}$.
\begin{itemize}
\item[$(i)$] $(\anki,\bnki)$ is a consequence of $(\al_{n,l,j},\be_{n,l,j})$ \iff\ $2nk+i\geq 2nl+j$.
\item[$(ii)$] If $i$ and $j$ are odd, then $(\anki^*, \bnki^*)$ is a consequence of $(\al_{n,l,j}^*, \be_{n,l,j}^*)$ \iff\ $2nk+i\geq 2nl+j$.
\item[$(iii)$] If $i$ odd, then $(\anki^*,\bnki^*)$ is a consequence of $(\al_{n,l,j},\be_{n,l,j})$ \iff\ $2nk+i>2nl+j$.
\item[$(iv)$] If $j$ odd, then $(\anki,\bnki)$ is a consequence of $(\al_{n,l,j}^*,\be_{n,l,j}^*)$ \iff\ $2nk+i>2nl+j$.
\end{itemize}
\end{cor}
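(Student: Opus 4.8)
The plan is to translate every clause into an inclusion of varieties and then read it off the ``chain of diamonds'' built in the proof of Proposition \ref{PairsIn}. Recall that a pair $\sigma$ is a consequence of a pair $\tau$ exactly when the variety defined by $\sigma$ contains the variety defined by $\tau$, that $\nki$ defines $\Vnki$, and that $\nkic$ defines $\Vnkic$. So I would first restate clause $(i)$ as ``$\Vnki\supseteq{\bf V}_{n,l,j}$ \iff\ $2nk+i\ge 2nl+j$'', clause $(ii)$ as ``$\Vnkic\supseteq{\bf V}^*_{n,l,j}$ \iff\ $2nk+i\ge 2nl+j$'' (for $i,j$ odd), and clauses $(iii)$, $(iv)$ as the mixed inclusions ``$\Vnkic\supseteq{\bf V}_{n,l,j}$'' and ``$\Vnki\supseteq{\bf V}^*_{n,l,j}$''. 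The two structural inputs I need from Proposition \ref{PairsIn} are: for $i$ odd the set $U_{2nk+i}$ is a diamond with bottom $\Vnki\cap\Vnkic$, incomparable middles $\Vnki,\Vnkic$, and top ${\bf V}_{n,k,i+1}$; and whenever two diamonds satisfy $2nk+i<2nl+j$ (both indices odd) every member of the first is properly contained in every member of the second.

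For clause $(i)$ I would observe that the varieties ${\bf V}_{n,k,i}$, ranging over all $i$ and $k$, form a single $\subseteq$-chain ordered by the integer $2nk+i$: two consecutive ones are either a middle and the top of one diamond, or the top of a diamond and the middle of the next, and in either case the diamond structure and the chain property make the smaller-index one properly contained in the larger. Since $2nk+i=2nl+j$ forces $(k,i)=(l,j)$, this gives $\Vnki\supseteq{\bf V}_{n,l,j}$ exactly when $2nk+i\ge 2nl+j$. Clause $(ii)$ is the identical argument applied to the varieties $\Vnkic$ with $i$ odd, which are the right-hand middles of their diamonds and likewise form a chain ordered by $2nk+i$; here again equal index means the two varieties coincide, which is why the inequality stays non-strict.

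Clauses $(iii)$ and $(iv)$ are left--right dual, so I would prove $(iii)$ and deduce $(iv)$ by the symmetry used throughout. The point is to locate, for $i$ odd, where ${\bf V}_{n,l,j}$ sits relative to the right-hand middle $\Vnkic$ of the diamond $U_{2nk+i}$: this middle contains the bottom of its own diamond and everything in strictly lower diamonds, but not its sibling $\Vnki$, nor the top ${\bf V}_{n,k,i+1}$, nor anything higher. A short parity check shows that the diamond housing ${\bf V}_{n,l,j}$ has index $2nl+j$ if $j$ is odd and $2nl+j-1$ if $j$ is even, and that in both cases $2nk+i>2nl+j$ forces this index to be $<2nk+i$ (whence ${\bf V}_{n,l,j}\subsetneq\Vnkic$ by the chain property), while $2nk+i<2nl+j$ places ${\bf V}_{n,l,j}$ at or above the top of $U_{2nk+i}$ (whence it is not contained in $\Vnkic$). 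The borderline $2nk+i=2nl+j$ can occur only with $j$ odd, which forces $(l,j)=(k,i)$ and ${\bf V}_{n,l,j}=\Vnki$; by Proposition \ref{CompDualPOdd} this is incomparable with $\Vnkic$, so it is excluded and the inequality in $(iii)$ (and dually $(iv)$) must be strict. I expect this borderline analysis to be the only delicate point: the contrast between the strict inequalities in $(iii)$, $(iv)$ and the non-strict ones in $(i)$, $(ii)$ rests entirely on the incomparability of the two middle vertices of a diamond, while everything else is bookkeeping on the index $2nk+i$.
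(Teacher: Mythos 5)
Your proof is correct and takes essentially the same route as the paper: the paper deduces this corollary directly from the ascending ``chain of diamonds'' structure established in the proof of Proposition \ref{PairsIn}, declaring it obvious from that picture. Your translation of consequence into variety inclusion, the parity bookkeeping locating each ${\bf V}_{n,l,j}$ in its diamond, and the use of the incomparability of the two middle varieties (Proposition \ref{CompDualPOdd}) to force the strict inequalities in $(iii)$ and $(iv)$ are exactly the details the paper leaves implicit.
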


We end this section by showing that the list (\ref{Varn}) is also the list of all varieties of \pseudo s defined by a single identity with 2-content $D_n$. In fact, we just need to prove that $\unki\approx v_{n,k,i-1}$ defines the variety $\Vnki\cap\Vnki^*$ for $i>1$ odd and that $u_{n,k,1}\approx v_{n,k-1,2n}$ defines the variety ${\bf V}_{n,k,1} \cap {\bf V}_{n,k,1}^*$.

\begin{prop}
The varieties from (\ref{Varn}) are precisely the varieties of \pseudo s defined by a single identity with 2-content $D_n$.
\end{prop}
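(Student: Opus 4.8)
The plan is to exploit Corollary \ref{VarIdn}. Since a single identity is in particular a set of identities, every variety defined by one identity with 2-content $D_n$ is defined by a set of such identities, so Corollary \ref{VarIdn} already yields the inclusion of the varieties defined by a single $D_n$-identity into the list (\ref{Varn}); the work is the reverse inclusion, namely to realise \emph{each} variety of (\ref{Varn}) by a single identity with 2-content $D_n$. The varieties $\Vnki$ and ${\bf V}_{n,k,i+1}$ cause no trouble, being defined respectively by \unkiv\ and by $u_{n,k,i+1}\approx v_{n,k,i+1}$, both of 2-content $D_n$. Thus, as the preceding remark announces, it remains to handle the dual varieties $\Vnkic$ (with $i$ odd) and the intersections $\Vnki\cap\Vnkic$ (with $i$ odd), and I would obtain both from a single pair of graphs.

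For $i>1$ odd I would take the pair $(\anki,\be_{n,k,i-1})$. Its two graphs lie in the same maximal subsemilattice $\S_{x_1,x_2}(X)$, both have 2-content $D_n$, and they are distinct (the left root has degree $1$ in $\anki$ but degree $2$ in $\be_{n,k,i-1}$), so this is a non-trivial pair of $\rho_{\bf SPS}$ whose associated identity $\unki\approx v_{n,k,i-1}$ has 2-content $D_n$. Both graphs are zigzag segments of length $2nk+i$ sharing the bi-rooted part up to the vertex in position $2nk+i-1$, so by Proposition \ref{relA}(3) their meet in $\S_{x_1,x_2}(X)$ is the smallest tree having both as bi-rooted subtrees, which is exactly $\bnki$ (the shared zigzag enlarged by the extra vertex contributed by each side). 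Applying the decomposition used in the proof of Proposition \ref{SubIn}, the pair $(\anki,\be_{n,k,i-1})$ is then equivalent to $\{(\anki,\bnki),(\be_{n,k,i-1},\bnki)\}=\{\nki,(\be_{n,k,i-1},\bnki)\}$.

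The crux, and the step I expect to be the real obstacle, is to identify $(\be_{n,k,i-1},\bnki)$ with $\nkic$. Here $\bnki$ arises from $\be_{n,k,i-1}$ by appending one \emph{left} vertex at the end of the zigzag opposite to the roots, so by Proposition \ref{DistVert} I would first move the distinguished pair to the last edge of $\be_{n,k,i-1}$, after which the appended vertex is adjacent to a distinguished vertex. A relabelling automorphism $\ol{\varphi}$ of $\A(X)$, induced by a permutation of $x_1,\dots,x_{2n}$ reversing the 2-content from $D_n$ to $D_n^*$ (of the kind used to define $\psi^*$), should then carry this pair onto $(\anki^*,\bnki^*)=\nkic$; proving that the image is \emph{exactly} $\nkic$ calls for the same patient label-chasing as in the proof of Lemma \ref{coversanki}, checked against the number of left vertices, which in both pairs rises from $nk+\tfrac{i-1}{2}$ to $nk+\tfrac{i+1}{2}$ --- the signature of the starred case already used in Lemma \ref{unki*iso}. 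This identification gives at once that $\Vnkic$ is defined by the single $D_n$-identity $v_{n,k,i-1}\approx\vnki$ and, together with the previous paragraph, that $(\anki,\be_{n,k,i-1})$ is equivalent to $\{\nki,\nkic\}$, so that $\unki\approx v_{n,k,i-1}$ defines $\Vnki\cap\Vnkic$.

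Finally, the case $i=1$ runs along the same lines with the pair $(\al_{n,k,1},\be_{n,k-1,2n})$, where $\be_{n,k-1,2n}$ is still the zigzag of length $2nk+1$ produced as above (also when $k=1$): its meet in $\S_{x_1,x_2}(X)$ is $\be_{n,k,1}$, the decomposition gives $\{(\al_{n,k,1},\be_{n,k,1}),(\be_{n,k-1,2n},\be_{n,k,1})\}$, and the second pair is identified with $(\al_{n,k,1}^*,\be_{n,k,1}^*)$ exactly as before, the replacement of the indices $i$ and $i-1$ by $k,1$ and $k-1,2n$ being forced by the wrap-around at a period boundary of length $2n$. Hence $u_{n,k,1}\approx v_{n,k-1,2n}$ defines ${\bf V}_{n,k,1}\cap{\bf V}_{n,k,1}^*$. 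Having produced, for every variety in (\ref{Varn}), a single identity with 2-content $D_n$ defining it, the reverse inclusion follows, and with it the claimed equality of the two families.
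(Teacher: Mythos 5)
Your proposal is correct and is essentially the paper's own proof: the paper likewise reduces the problem to showing that $\unki\approx v_{n,k,i-1}$ (and $u_{n,k,1}\approx v_{n,k-1,2n}$ when $i=1$) defines $\Vnki\cap\Vnkic$, splits $(\anki,\be_{n,k,i-1})$ via the meet $\anki\wedge\be_{n,k,i-1}=\bnki$ into $\{\nki,\,(\be_{n,k,i-1},\bnki)\}$, and then converts $(\be_{n,k,i-1},\bnki)$ into $\nkic$ by moving the distinguished vertices to the last edge (Proposition \ref{DistVert}) and relabelling with a permutation. The only cosmetic difference is that you additionally spell out the left-vertex count and record explicitly that $v_{n,k,i-1}\approx\vnki$ is itself a $D_n$-identity defining $\Vnkic$, details the paper leaves implicit.
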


\begin{proof}
We just need to prove the two claims above. As for earlier results, the proof of the case $i=1$ is similar to the proof of the case $i\neq 1$ but needs minor obvious adaptations. Therefore, we shall prove only the general case $i\neq 1$. So, assume that $i$ is odd and greater than 1. Since $\anki\wedge\be_{n,k,i-1}=\bnki$, the pair $(\anki,\be_{n,k,i-1})$ is equivalent to the set $\{(\anki,\bnki),\,(\be_{n,k,i-1},\bnki)\}$. Let $a$ be the only vertex from $\bnki\setminus\be_{n,k,i-1}$; then $a$ is a left vertex. Let $b$ and $c$ be the vertices of $\bnki$ such that $(a,b)$ and $(c,b)$ are edges of $\bnki$ ($b$ and $c$ are clearly unique); and let $\al'$ and $\be'$ be respectively the graphs $\be_{n,k,i-1}$ and $\bnki$ but with $b$ and $c$ as the distinguished vertices. Thus $(\be_{n,k,i-1},\bnki)$ and $(\al',\be')$ are equivalent by Proposition \ref{DistVert}. Finally, we can obtain $\anki^*$ and $\bnki^*$ respectively from $\al'$ and $\be'$ by relabeling the vertices using a permutation; whence $(\al',\be')$ is equivalent to $(\anki^*,\bnki^*)$. We have proved that $\unki\approx v_{n,k,i-1}$ and $\{\unki\approx\vnki,\, \unki^*\approx \vnki^*\}$ are equivalent, that is, $\unki\approx v_{n,k,i-1}$ defines the variety $\Vnki\cap\Vnki^*\,$.
\end{proof}

\section{Comparing $\Vnki$ with $\Vmlj$}

In the previous section we found and listed all the varieties of \pseudo s defined by sets of identities, all with 2-content $D_n$, and we studied their inclusion relation. Further, we showed that this list is also the list of all varieties of \pseudo s defined by a single identity with 2-content $D_n$. In the present section we shall study the inclusion relation between varieties of \pseudo s defined by identities all with 2-content $D_n$ and varieties of \pseudo s defined by identities all with 2-content $D_m$ for $n\neq m$. We begin by comparing $\Vnki$ with $\Vmlj$ for $n,m\geq 2$, $k,l\geq 1$, $1\leq i\leq 2n$ and $1\leq j\leq 2m$, and we claim that $\Vnki\subseteq\Vmlj$ \iff\ $m\leq n$ and either $l>k$, or  $k=l$ and $j\geq i+2m-2n$. The first result of this section is precisely the `if' part of our claim although stated in terms of elementary pairs from $\rho_{\bf SPS}$.

\begin{prop}\label{n_m_if}
Let $2\leq m\leq n$, $l,k\geq 1$, $i\in\{1,\cdots,2n\}$ and $j\in\{1,\cdots,2m\}$. If either $l>k$, or $l=k$ and $j\geq i+2m-2n$, then $\mlj$ is a consequence of $\nki$ .
\end{prop}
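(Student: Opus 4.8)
The plan is to reduce the statement to producing a single extremal pair and then to feed it into Proposition~\ref{nki_nlj} inside $D_m$. First I would record that the hypothesis ``$l>k$, or $l=k$ and $j\ge i+2m-2n$'' describes exactly the pairs $(l,j)$ with $2ml+j\ge N_0$, where $N_0=2mk+j_0$ and $j_0=\max\{1,\,i+2m-2n\}$: for $l=k$ the bound $j\ge i+2m-2n$ is $2ml+j\ge N_0$, while $l>k$ forces $2ml+j\ge 2m(k+1)+1>N_0$ because $j_0\le 2m$. By Proposition~\ref{nki_nlj} (applied with $m$ in the role of $n$), $\mlj$ is a consequence of $(\al_{m,k,j_0},\be_{m,k,j_0})$ whenever $2ml+j\ge N_0$. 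Hence it suffices to prove that the single extremal pair $(\al_{m,k,j_0},\be_{m,k,j_0})$ is a consequence of $\nki$.

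The key construction is a relabelling that folds the $2n$-cycle underlying $D_n$ onto the $2m$-cycle underlying $D_m$ while shifting the ``phase'' down by $2(n-m)$. Writing $p=n-m$, I would take the endomorphism $\varphi$ of $F_2(X)$ fixing every letter outside $\{x_1,\dots,x_{2n}\}$ and setting, for $1\le s\le 2n$, $x_s\varphi=x_1$ if $s$ is odd and $s\le 2p+1$, $x_s\varphi=x_2$ if $s$ is even and $s\le 2p+2$, and $x_s\varphi=x_{s-2p}$ if $s\ge 2p+3$. The associated sequence $x_1\varphi,\dots,x_{2n}\varphi$ is the ``bounce-then-advance'' word $x_1,x_2,\dots,x_1,x_2,x_3,x_4,\dots,x_{2m}$ (with $p+1$ copies of $x_1,x_2$). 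Thus consecutive images are adjacent in the $D_m$-cycle and all its edges are covered, the two roots keep their labels $x_1,x_2$, and crucially $x_{2n}\varphi=x_{2m}$, so the extra $x_{2n}$-vertex of $\bnki$ is sent to the extra $x_{2m}$-vertex demanded by $\be_{m,\cdot,\cdot}$.

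Next I would compute $\anki\ol{\varphi}$ and $\bnki\ol{\varphi}$ via $\ga\ol{\varphi}=\ol{\ga'}$, where $\ga'$ is $\ga$ with labels changed by $\varphi$. Since the labels of $\anki$ are periodic, each bounce block $x_1,x_2,\dots,x_1,x_2$ collapses under repeated edge-foldings to a single edge $x_1,x_2$; tracking these foldings and appealing to the uniqueness in Lemma~\ref{uniqueword}, I expect $\anki\ol{\varphi}=\al_{m,k,\,i-2p}$ and $\bnki\ol{\varphi}=\be_{m,k,\,i-2p}$ whenever $i-2p\ge 2$, that is, $j_0=i-2p$. By Corollary~\ref{deltafullyinvariant} the pair $(\anki\ol{\varphi},\bnki\ol{\varphi})$ is a consequence of $\nki$, which settles this generic case; when $i$ is odd the same collapse already yields phase $1$, so the only genuinely separate situation is $j_0=1$ with $i$ even.

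The main obstacle is precisely this parity-obstructed case: a relabelling preserves the left/right bipartition and therefore cannot carry the right-ending chain $\anki$ (with $i$ even) to the left-ending chain $\al_{m,k,1}$. I would remove it by first replacing $\nki$ with the equivalent pair $(\anki,\al_{n,k,i+1})$ supplied by Lemma~\ref{coversanki} (legitimate since $i\neq 2n$): now $i+1$ is odd, so applying $\ol{\varphi}$ produces $(\al_{m,k,2},\al_{m,k,1})$, and a second use of Lemma~\ref{coversanki} identifies $(\al_{m,k,1},\al_{m,k,2})$ with $(\al_{m,k,1},\be_{m,k,1})$; hence $(\al_{m,k,1},\be_{m,k,1})$ is a consequence of $\nki$ here too. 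Combining the three cases with the reduction of the first paragraph finishes the argument. I expect the delicate point to be the edge-folding bookkeeping of the third paragraph, namely verifying that no unintended folding occurs at the seams between consecutive periods nor at the $\be$-vertex (which is safe because $x_{2m}\neq x_2$ for $m\ge 2$).
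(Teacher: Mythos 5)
Your overall plan is exactly the paper's: reduce to the single extremal pair $(\al_{m,k,j_0},\be_{m,k,j_0})$ via Proposition \ref{nki_nlj}, then realize that pair as $(\anki\ol{\varphi},\bnki\ol{\varphi})$ for a relabelling endomorphism $\varphi$; your first paragraph and your computation in the generic case $j_0=i-2p\ge 2$ are correct. The gap is in the cases $j_0=1$ with $2\le i\le 2p+1$ (nonempty exactly when $n>m$), and it comes from your choice of $\varphi$. Because your ``bounce'' alternates between the two \emph{distinct} letters $x_1,x_2$, a collapsed bounce block always ends in an edge whose pendant vertex is labelled $x_2$ while its neighbour is labelled $x_1$; that pendant is \emph{not} a thorn, so it is never deleted. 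Consequently, for every $i$ with $2\le i\le 2p+1$ --- odd as well as even --- the last (partial) block of $\anki$ reduces to \emph{two} vertices, and one gets $\anki\ol{\varphi}=\al_{m,k,2}$ and $\bnki\ol{\varphi}=\be_{m,k,2}$. So your assertion that odd $i$ ``already yields phase $1$'' is false except for $i=1$ (test $n=3$, $m=2$, $k=1$, $i=3$: the relabelled graph reduces to the $6$-vertex graph $\al_{2,1,2}$, not to the $5$-vertex graph $\al_{2,1,1}$). Your repair of the even case collapses for the same reason: since $i+1\le 2p+1$, applying $\ol{\varphi}$ to the pair $(\anki,\al_{n,k,i+1})$ from Lemma \ref{coversanki} yields $(\al_{m,k,2},\al_{m,k,2})$, a \emph{trivial} pair, not $(\al_{m,k,2},\al_{m,k,1})$. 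In this whole range your argument therefore delivers only $(\al_{m,k,2},\be_{m,k,2})$, which by Corollary \ref{nki_nlj_iff} is strictly weaker than the required $(\al_{m,k,1},\be_{m,k,1})$.

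The fix is the paper's relabelling: send \emph{all} of $x_1,\dots,x_{2n-2m}$ to the single letter $x_1$ (and $x_s\mapsto x_{s+2m-2n}$ for $s>2n-2m$). Then a collapsed block ends in a pendant edge with \emph{equal} labels $x_1$, i.e.\ a thorn, and thorn deletion removes it --- this is precisely what lets the phase drop to $j_0=1$. One point then needs care (the paper itself glosses over it): in the first block the pendant vertex is the right root, so that thorn is essential and survives; the reduced graphs are really $\al_{m,k,j_0}\wedge\Te(x_1)$ and $\be_{m,k,j_0}\wedge\Te(x_1)$ rather than $\al_{m,k,j_0}$ and $\be_{m,k,j_0}$. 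This is harmless: exactly as in the proof of Proposition \ref{SubIn}, adding or removing an essential thorn on both components of a pair produces an equivalent pair (multiply by $\Te(x_2)$ to delete it, by $\Te(x_1)$ to restore it). With that relabelling and this extra equivalence step, the remainder of your argument goes through unchanged.
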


\begin{proof}
Let $\varphi$ be an endomorphism of $F_2(X)$ such that
\[x_p\varphi=\left\{\begin{array}{ll}
x_1 & \mbox{ for } 1\leq p\leq 2n-2m \\ [.2cm]
x_{p+2m-2n} & \mbox{ for } 2n-2m<p\leq 2n\; ,
\end{array}\right.\]
and let $j_1=\max\{1,i+2m-2n\}$. Then $\anki\ol{\varphi} =\al_{m,k,j_1}$ and $\bnki\ol{\varphi}=\be_{m,k,j_1}$, and so $(\al_{m,k,j_1},\be_{m,k,j_1})$ is a consequence of $\nki$ . Now, if $l>k$ then $2ml+j\geq 2mk+j_1$; and if $l=k$ and $j\geq i+2m-2n$, then $j\geq j_1$ and $2ml+j\geq 2mk+j_1$. By Proposition \ref{nki_nlj} we conclude that $\mlj$ is a consequence of $(\al_{m,k,j_1},\be_{m,k,j_1})$, and we have shown that $\mlj$ is a consequence of $\nki$ as desired.
\end{proof}

We begin working now towards the proof of the `only if' part of our claim. Our first observation is that $m$ must be less than or equal to $n$. Indeed, if $m>n$ then $u_{m,l,j}$ is an isoterm for the identity \unkiv\ by Lemma \ref{uisoterm} and Proposition \ref{nki_nlj}; thus $u_{m,l,j}\approx v_{m,l,j}$ cannot be a consequence of \unkiv . Before we can give a formal proof of the `only if' part of our claim, we need to do a deep analysis onto the structure of $\anki\ol{\varphi}$ for $\varphi$ an endomorphism of $F_2(X)$ such that $\co_2(u_{n,1,1}\varphi)=\co_2(u_{n,k,i}\varphi) \subseteq D_m$. This analysis will culminate with the proof of Lemma \ref{unki_phi}. So, fix an endomorphism $\varphi$ of $F_2(X)$ such that $\co_2 (u_{n,k,i}\varphi) \subseteq D_m$ for $m\leq n$.

Let $a_1,\, a_2, \cdots, a_{2n+1}$ designate sequentially the vertices of $\al_{n,1,1}$ with $a_1$ its left root. Hence, $a_2$ is its right root and the edges of $\al_{n,1,1}$ are the pairs $(a_{p-1},a_p)$ and $(a_{p+1},a_p)$ for $p$ even in between $2$ and $2n$. Set $\be=\sk(u_{n,1,1}, \varphi)$. Thus $\be$ has the same underlying graph as $\al_{n,1,1}$ but with each label $x_p$ changed to $\l(x_p\varphi)$ if $p$ odd and to $\r(x_p\varphi)$ if $p$ even. Consider the natural graph homomorphism $\pi_\be:\be \rightarrow \wt{\be}$ from $\be$ onto $\wt{\be}$.

\begin{lem}\label{betatrivial}
If $a_1\pi_\be=a_{2n+1} \pi_\be$ then $u_{n,k,i}\varphi \approx v_{n,k,i}\varphi$ is a trivial identity for any $k\geq 1$ and any $i\in\{1,\cdots,2n\}$.
\end{lem}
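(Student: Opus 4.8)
The plan is to reproduce the concluding argument in the proof of Lemma~\ref{unkivarphi}: there the difficult point was to show that the two degree~$1$ vertices of the skeleton are identified by a sequence of edge-foldings, and here that identification is exactly what the hypothesis $a_1\pi_\be=a_{2n+1}\pi_\be$ grants at once. I would first reduce the problem to showing, for all $k\geq 1$ and all $i$, that $\unki\varphi\approx\vnki\varphi$ is trivial, drawing only on the single piece of data about $\be=\sk(u_{n,1,1},\varphi)$. The mechanism is the $2n$-periodicity of the labels of the skeletons $\sk(\unki,\varphi)=\De(\unki\psi)$ already exploited in Lemma~\ref{unkivarphi}: the edge-foldings identifying $a_1$ with $a_{2n+1}$ in $\be$ form a local pattern on $2n+1$ consecutive, periodically labelled vertices, so the same pattern, applied around the left root of $\sk(\unki,\varphi)$, merges that left root (labelled $\l(x_1\varphi)$) with the vertex lying $2n$ steps along the zig-zag, which carries the same label.

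Granting this, I would run the absorption argument directly on $\De(\vnki\varphi)$. Recall that $\vnki$ adjoins to the left root of $\anki$ an extra right vertex labelled $x_{2n}$, while the vertex $2n$ steps along already has a neighbour labelled $x_{2n}$. Applying the folding sequence above inside $\De(\vnki\varphi)$ merges the left root with its $2n$-shift; after the merge the resulting vertex carries two copies of $\De(x_{2n}\varphi)^R$, namely the one contributed by $\vnki$ and the one coming from that neighbour, and these fold onto a single copy by one further sequence of edge-foldings. The graph thus produced is precisely the one obtained from $\De(\unki\varphi)$ by the same folding sequence, where only a single copy of $x_{2n}\varphi$ is ever present. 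Since edge-foldings do not alter the reduced form, this gives $\ol{\De(\vnki\varphi)}=\ol{\De(\unki\varphi)}$, i.e. $\Te(\unki\varphi)=\Te(\vnki\varphi)$, so $\unki\varphi\approx\vnki\varphi$ is trivial.

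The step I expect to cost the most care is the reduction itself, that is, transferring the hypothesis on $\be$ to the longer skeleton $\sk(\unki,\varphi)$. It is tempting to invoke merely that $\unki\approx\vnki$ is a consequence of $u_{n,1,1}\approx v_{n,1,1}$ (Proposition~\ref{nki_nlj}), but being a consequence does not by itself transfer triviality under a fixed substitution $\varphi$; one genuinely needs the periodicity of the labelling to see that the collapsing foldings replicate around the left root of the longer skeleton. A secondary point to verify is that the foldings which collapse $\be$ remain legitimate when carried out inside the ambient tree $\De(\vnki\varphi)$: this is fine because each of them involves only vertices of the subtree $\be$, and an edge-folding is never obstructed by further subtrees incident to the vertices being merged.
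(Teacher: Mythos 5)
Your argument is correct, and its engine is exactly the paper's: use the foldings supplied by the hypothesis to merge $a_1$ with $a_{2n+1}$ inside $\De(\vnki\varphi)$, observe that the merged vertex then carries two copies of $\De(x_{2n}\varphi)^R$ which fold onto a single copy, and conclude $\ol{\De(\vnki\varphi)}=\ol{\De(\unki\varphi)}$ because edge-foldings do not change reduced forms. The genuine difference is the treatment of general $(k,i)$. The paper disposes of it in one sentence, reducing to $k=1$, $i=1$ on the grounds that $\unki\approx\vnki$ is a consequence of $u_{n,1,1}\approx v_{n,1,1}$, and then runs the folding argument only for $u_{n,1,1}$; you instead run it directly on $\sk(\unki,\varphi)$, whose first $2n+1$ vertices carry exactly the labels of $\be$, so the hypothesis applies verbatim there. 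Your caution about the consequence-based shortcut is mathematically justified: triviality of $u_{n,1,1}\varphi\approx v_{n,1,1}\varphi$ for one fixed $\varphi$ does not by itself pass along a consequence relation, since an equational derivation of $\unki\approx\vnki$ from $u_{n,1,1}\approx v_{n,1,1}$ uses \emph{other} substitution instances of the latter, not the instance by $\varphi$. The paper's reduction can nevertheless be salvaged by an observation of the same nature as your periodicity argument: the endomorphism $\psi$ built in the proof of Proposition~\ref{nki_nlj} satisfies $\l(x_p\psi)=x_p$ for $p$ odd and $\r(x_p\psi)=x_p$ for $p$ even, whence $\sk(u_{n,1,1},\psi\varphi)=\sk(u_{n,1,1},\varphi)=\be$ while $\Te(\unki\varphi)=\Te(u_{n,1,1}\psi\varphi)$ and likewise for $\vnki$; so the base case applied to the substitution $\psi\varphi$ yields the general one. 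In short, the paper's route is terser but its one-line justification is incomplete as stated, while yours is marginally longer but self-contained, making explicit the periodicity on which the reduction silently relies.
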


\begin{proof}
First note that the general case follows from the case case $k=1$ and $i=1$ since \unkiv\ is a consequence of $u_{n,1,1} \approx v_{n,1,1}$. We shall conclude that $u_{n,1,1}\varphi\approx v_{n,1,1}\varphi$ is a trivial identity by an argumentation already used in previous results. We can start by identifying the vertices $a_1$ and $a_{2n+1}$ in $\De(v_{n,1,1}\varphi)$ by the same sequence of edge-foldings used in $\be$ to identify these same vertices. Then two copies of $x_{2n}\varphi$ become attached to the vertex $a_1$ and they can be reduced to a single copy again by a sequence of edge-foldings. We can observe now that this latter graph is just the graph obtained from $\De(u_{n,1,1}\varphi)$ by identifying the vertices $a_1$ and $a_{2n+1}$ using again the same sequence of edge-foldings used in $\be$. Thus $\Te(u_{n,1,1} \varphi)= \Te(v_{n,1,1}\varphi)$ or equivalently $u_{n,1,1}\varphi\approx v_{n,1,1}\varphi$ is a trivial identity. 
\end{proof}

Now, assume that $a_1\pi_\be\neq a_{2n+1}\pi_\be$. Thus $a_1\pi_\be$ and $a_{2n+1}\pi_\be$ are two distinct left vertices of $\wt{\be}$ with the same label. Since $\wt{\be}$ is reduced for edge-folding and $\co_2(\wt{\be})\subseteq D_m$, we must have $\co_2(\be)=\co_2(\wt{\be})=D_m$. Let $x_h= \l(x_1\varphi)$, the label of $a_1$ in $\be$. If $h$ is odd then let $a_1\pi_\be=b_1, b_2,\cdots, b_{r+1}=a_{2n+1}\pi_\be$ be the geodesic path from $a_1\pi_\be$ to $a_{2n+1}\pi_\be$ in $\wt{\be}$; otherwise consider instead the geodesic path $a_2\pi_\be=b_1, b_2,\cdots, b_{r+1}=a_{2n}\pi_\be$ from $a_2\pi_\be$ to $a_{2n}\pi_\be$; designate this geodesic path by $\be'$. Thus $b_1\neq b_{r+1}$ and $\cb_{b_1}=\cb_{b_{r+1}}$ independently of $h$ being odd or even (note that $\cb_{a_2}=\cb_{a_1}= \cb_{a_{2n+1}}= \cb_{a_{2n}}$ if $h$ even). Further, $\be'$ is obviously edge-folding reduced and has no (essential and non-essential) thorn. Hence, $\be'$ is also a geodesic path in $\ol{\be}\in\A(X)$. By Lemma \ref{uniqueword}, $r=2ms$ for some $s\geq 1$. If $\cb_{b_2}=\cb_{b_r}$, then $\cb_{b_3}=\cb_{b_{r-1}}$ because $\co_2(\be)= D_m$, $\cb_{b_3}\neq\cb_{b_1}$ and $\cb_{b_{r-1}}\neq \cb_{b_{r+1}}$; continuing this process we would conclude that $\cb_{b_{ms}}=\cb_{b_{ms+2}}$, which contradict the fact that $\be'$ is edge-folding reduced. Hence $\cb_{b_2}\neq\cb_{b_r}$. We have proved the following lemma.

\begin{lem}
With the notation introduced above, if $a_1\pi_\be\neq a_{2n+1}\pi_\be$ then $r=2ms$ for some $s\geq 1$ and $\cb_{b_2}\neq\cb_{b_r}$.
\end{lem}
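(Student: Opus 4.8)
The plan is to show that $\be'$, regarded as a graph in its own right, is a reduced zig-zag path sitting inside $\ol{\be}\in\A(X)$, and then to read off both conclusions from the rigid label structure that $\co_2=D_m$ forces on such a path.

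First I would check that $\be'$ is a genuine geodesic path in $\ol{\be}$. By hypothesis $b_1\neq b_{r+1}$ while $\cb_{b_1}=\cb_{b_{r+1}}$, and $\be'$ is a subpath of $\wt{\be}$, hence edge-folding reduced; the parity case-split on $h$ (using $a_1,a_{2n+1}$ when $h$ is odd and $a_2,a_{2n}$ when $h$ is even) is precisely what guarantees that $\be'$ carries no essential or non-essential thorn, so the second reduction rule is inapplicable to it as well. Thus $\be'$ survives intact into $\ol{\be}$, and since $\co_2(\ol{\be})=D_m$ with $m\geq 2$, the $D_m$-version of Lemma \ref{uniqueword}, applied to the two distinct, non-adjacent, equally-labelled endpoints $b_1,b_{r+1}$ (which, by the thorn-freeness just noted, fall into the ``$2ms+1$ vertices'' case rather than the ``$+2$'' essential-thorn case), yields $r+1=2ms+1$, i.e. $r=2ms$ for some $s\geq 1$.

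For the second assertion I would exploit that, in a reduced graph, the labels along a geodesic must progress monotonically by $\pm 1$ modulo $2m$. Indeed, any interior vertex of $\be'$ labelled $x_p$ pairs in $D_m$ only with $x_{p-1}$ and $x_{p+1}$; if its two path-neighbours shared a label this would be an edge-folding configuration, contradicting the reducedness of $\wt{\be}$. Writing $\cb_{b_1}=x_q$, it follows that the labels run $x_q,x_{q+1},\dots$ or $x_q,x_{q-1},\dots$ (indices read modulo $2m$ in $\{1,\dots,2m\}$); in either orientation, since $r=2ms$, one gets $\{\cb_{b_2},\cb_{b_r}\}=\{x_{q+1},x_{q-1}\}$. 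These two labels coincide only if $2\equiv 0\pmod{2m}$, which fails for $m\geq 2$, so $\cb_{b_2}\neq\cb_{b_r}$. Equivalently, one may argue in the author's contradiction style: assuming $\cb_{b_2}=\cb_{b_r}$ and propagating the equality inwards through the $D_m$-pairing (using $\cb_{b_3}\neq\cb_{b_1}$ and $\cb_{b_{r-1}}\neq\cb_{b_{r+1}}$ at the ends) forces $\cb_{b_{ms}}=\cb_{b_{ms+2}}$, an edge-folding at $b_{ms+1}$ contradicting reducedness.

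The main obstacle I anticipate is not either computation but the bookkeeping that legitimises the appeal to Lemma \ref{uniqueword}: one must be sure that the chosen endpoints are non-adjacent and lie in no essential thorn of $\ol{\be}$, and that the passage $\be\to\wt{\be}\to\ol{\be}$ leaves the relevant segment unaltered. This is exactly what the case distinction between $h$ odd and $h$ even is designed to accomplish, and carefully verifying the thorn-freeness of $\be'$ in both cases is the only genuinely delicate point of the argument.
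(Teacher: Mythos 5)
Your proposal is correct and follows the paper's own argument essentially step for step: the same parity case-split on $h$ to obtain a thorn-free, edge-folding-reduced geodesic $\be'$ that survives into $\ol{\be}$, and the same appeal to Lemma \ref{uniqueword} (with $m$ in place of $n$) to get $r=2ms$. For $\cb_{b_2}\neq\cb_{b_r}$, your monotone label-progression argument is only a cosmetic repackaging of the inward-propagation contradiction the paper uses — and you reproduce that contradiction argument verbatim as your stated alternative — so the two proofs coincide in substance.
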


Consider now the graph $\anki$ and let $a_1,\, a_2, \cdots, a_{2nk+i}$ designate sequentially the vertices of $\al_{n,k,i}$ with $a_1$ its left root. We can view $\al_{n,1,1}$ as the subgraph of $\anki$ spanned over the vertices $a_1, a_2,\cdots ,a_{2n+1}$ (or spanned over the vertices $a_{2n(t-1)+1},a_{2n(t-1)+2},\cdots ,a_{2nt+1}$ for $1\leq t\leq k$ with $a_{2n(t-1)+1}$ and $a_{2n(t-1)+2}$ as the distinguished vertices). Set $\al=\sk(\unki ,\varphi)$. Thus $\al$ has the same underlying graph as $\anki$ but with each label $x_p$ changed to $\l(x_p\varphi)$ if $p$ odd and to $\r(x_p\varphi)$ if $p$ even. In particular $\al$ is also a `periodic' graph in the sense that the vertices $a_p$ and $a_{p+2n}$ have the same label in $\al$. Thus, if $\be_t$ denotes the subgraph of $\al$ spanned over the vertices $\{a_q\,\mid\; 2n(t-1)+1 \leq q\leq 2nt+1\}$ for each $t\in\{1,\cdots,k\}$, then all these subgraphs $\be_t$ are isomorphic to $\be$ (considering $a_{2n(t-1)+1}$ and $a_{2n(t-1)+2}$ the distinguished vertices of $\be_t$). Let also $\be_{k+1}$ be the subgraph of $\al$ spanned over the vertices $\{a_{2nk+1},\cdots,a_{2nk+i}\}$ (that is, $\be_{k+1}$ is the last incomplete copy of $\be$ inside $\al$). The (underlying structure of the) graph $\al$ can be depicted as follows (for $i$ even):

\centerline{\tikz[scale=0.5]{
\draw (-1.5,1) node{$\al=$};
\draw (1.5,-1.2) node{\small $\be$};
\draw (6.5,-1.4) node{\small $\be$};
\draw (13.5,-1.3) node{\small $\be$};
\draw (18.5,-1.4) node{\small $\be_{k+1}$};
\coordinate[label=below:\footnotesize $a_1$] (1) at (0,0);
\coordinate[label=above:\footnotesize $a_2$] (2) at (0.5,2);
\coordinate[label=below:\footnotesize $a_{2n-1}$] (3) at (2.5,0);
\coordinate[label=above:\footnotesize $a_{2n}$] (4) at (3,2);
\draw (1.5,1) node{$\dots$};
\draw (1) node {$\bullet$};
\draw (2) node {$\bullet$};
\draw (3) node {$\bullet$};
\draw (4) node {$\bullet$};
\draw (2)--(0.7,1.3);
\draw (2.3,0.7)--(3);
\draw[double] (1)--(2);
\draw (3)--(4);
\coordinate[label=below:$\scriptstyle a_{2n+1}$] (5) at (4.5,0);
\draw[dashed,rounded corners=3pt] (3.5,-0.9)--(4.1,2.8)--(7.8,2.8)--(10.2,-0.9)--cycle;
\draw[dashed,rounded corners=3pt] (10.4,-0.8)--(10.9,2.9)--(15.5,2.9)--(17.7,-0.8)--cycle;
\draw[dashed,rounded corners=3pt] (15.4,-0.9)--(16,2.9)--(20.6,2.9)--(20.2,-0.9)--cycle;
\draw[dashed,rounded corners=3pt] (-0.5,-0.7)--(0,2.7)--(3.5,2.7)--(5.5,-0.7)--cycle;
\coordinate[label=above:$\scriptstyle a_{2n+2}$] (6) at (5,2);
\coordinate[label=below:$\scriptstyle a_{4n-1}$ ] (7) at (7,0);
\coordinate[label=above:$\scriptstyle a_{4n}$] (8) at (7.5,2);
\draw (6,1) node{$\dots$};
\draw (5) node {$\bullet$};
\draw (6) node {$\bullet$};
\draw (7) node {$\bullet$};
\draw (8) node {$\bullet$};
\draw (6)--(5.2,1.3);
\draw (6.8,0.7)--(7);
\draw (4)--(5)--(6);
\draw (7)--(8);
\coordinate[label=below:$\scriptstyle a_{4n+1}$] (9) at (9,0);
\coordinate[label=below:$\scriptstyle a_{2n(k-1)+1}$] (10) at (12,0);
\coordinate[label=above:$\scriptstyle a_{2n(k-1)+2}$] (11) at (12.5,2);
\coordinate[label=below:$\scriptstyle a_{2nk-1}$] (12) at (14.5,0);
\coordinate[label=above:$\scriptstyle a_{2nk}$] (13) at (15,2);
\draw (13.5,1) node{$\dots$};
\draw (9) node {$\bullet$};
\draw (10) node {$\bullet$};
\draw (11) node {$\bullet$};
\draw (12) node {$\bullet$};
\draw (13) node {$\bullet$};
\draw (9)--(9.5,1);
\draw (10)--(11.5,1);
\draw (10.6,1) node{$\dots$};
\draw (11)--(12.7,1.3);
\draw (14.3,0.7)--(12);
\draw (8)--(9);
\draw (10)--(11);
\draw (12)--(13);
\draw[dotted] (1)--(3);
\draw[dotted] (2)--(4);
\draw[dotted] (5)--(7);
\draw[dotted] (6)--(8);
\draw[dotted] (10)--(12);
\draw[dotted] (11)--(13);
\coordinate[label=below:$\scriptstyle a_{2nk+1}$] (14) at (16.5,0);
\coordinate[label=above:$\scriptstyle a_{2nk+2}$] (15) at (17,2);
\coordinate[label=below:$\scriptstyle a_{2nk+i-1}$] (16) at (19,0);
\coordinate[label=above:$\scriptstyle a_{2nk+i}$] (17) at (19.5,2);
\draw (18,1) node{$\dots$};
\draw (14) node {$\bullet$};
\draw (15) node {$\bullet$};
\draw (16) node {$\bullet$};
\draw (17) node {$\bullet$};
\draw (13)--(14)--(15);
\draw (16)--(17);
\draw (15)--(17.2,1.3);
\draw (18.8,0.7)--(16);
\draw[dotted] (14)--(16);
\draw[dotted] (15)--(17);
}}

Let us look to the subgraph $\ga=\sk(u_{n,k,1}, \varphi)$ of $\al$ under the assumption that $a_1\pi_\be\neq a_{2n+1}\pi_\be$. Note that $\ga$ is a sequence of $k$ graphs $\be_1,\be_2,\cdots,\be_k$, each one a copy of $\be$, such that the last vertex of $\be_{t-1}$ is the first vertex of $\be_t$ for $1<t\leq k$. Let $\ga'$ be the graph obtained from $\ga$ by reducing inside $\ga$ each subgraph $\be_t$ to $\wt{\be_t}$ by edge-folding. Thus each subgraph $\wt{\be_t}$ of $\ga'$ contains a geodesic path $b_{1,t},b_{2,t}\cdots , b_{r+1,t}$ isomorphic to $b_1,b_2,\cdots,b_{r+1}$. Further, if $h$ is odd, we can assume that $b_{r+1,t-1}=b_{1,t}$; and if $h$ is even, we can assume that we can merge together $b_{r+1,t-1}$ and $b_{1,t}$ by an edge-folding, for $1<t\leq k$. Now, set $\ga^+=\ga'$ if $h$ odd and set $\ga^+$ to be the graph obtained from $\ga'$ by merging together by edge-folding each $b_{r+1,t-1}$ and $b_{1,t}$ if $h$ even. Thus
\[b_{1,1},\cdots,b_{r,1},b_{1,2},\cdots,b_{r,2},\;\cdots\;, b_{1,k},\cdots, b_{r,k},b_{r+1,k}\]
is a path in $\ga^+$ with no thorns. Since $\cb_{b_2}\neq \cb_{b_r}$, this path is edge-folding reduced too. Hence $\ol{\ga}$ contains this path. Finally, since $r=2ms$ for some $s\geq 1$, the previous path has at least $2mk+1$ vertices. 

\begin{lem}\label{unki_phi}
Let $m\leq n$, $k\geq 1$ and $1\leq i\leq 2n$, and consider an endomorphism $\varphi$ of $F_2(X)$ such that $\co_2(\unki\varphi) \subseteq D_m$. If $\unki\varphi\approx\vnki\varphi$ is not a trivial identity, then $\anki\ol{\varphi}$ has a geodesic path with no (essential) thorn and with at least $2mk+j$ vertices where
\begin{itemize}
\item[$(i)$] $j=\max\{1,i+2m-2n\}$ if the geodesic path starts with a left vertex, or
\item[$(ii)$] $j=\max\{1,i+1+2m-2n\}$ if the geodesic path starts with a right vertex.
\end{itemize}
\end{lem}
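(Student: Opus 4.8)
The plan is to extend the analysis that precedes the statement, which already disposes of the $k$ complete periods, through the last incomplete period $\be_{k+1}$, and then to read off the vertex count from a $\pm1$-walk that records the folding. First I would reduce to the situation already treated: since $\unki\varphi\approx\vnki\varphi$ is non-trivial, Lemma \ref{betatrivial} forbids $a_1\pi_\be=a_{2n+1}\pi_\be$, so the discussion above applies verbatim and gives $\co_2(\anki\ol{\varphi})=D_m$, an integer $r=2ms$ with $s\ge 1$ (so $ms\le n$), the inequality $\cb_{b_2}\neq\cb_{b_r}$, and a thorn-free, edge-folding reduced path $b_{1,1},\dots,b_{r+1,k}$ on $2msk+1\ge 2mk+1$ vertices coming from the first $k$ periods. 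A useful preliminary observation is that $\anki\ol{\varphi}=\ol{\De(\unki\varphi)}$ is itself a path: it is a tree (both reduction rules preserve tree-ness, an edge-folding only merging the two endpoints of a length-two subpath and a thorn deletion only removing a leaf), and every vertex has degree at most two, because in $D_m$ each even-indexed label pairs with only two odd-indexed labels and conversely, so a vertex has neighbours of at most two labels, and edge-folding reducedness then identifies any two equally labelled neighbours.

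Next I would record the reduction of the periodic skeleton $\al=\sk(\unki,\varphi)$ as a walk on the line of vertices of $\anki\ol{\varphi}$. Mapping the consecutive skeleton vertices $a_1,a_2,\dots,a_{2nk+i}$ into the path $\anki\ol{\varphi}$, each edge of $\al$ is sent either to an edge of $\anki\ol{\varphi}$ or is folded; in either case two consecutive images sit at integer positions differing by exactly one, so the images perform a $\pm1$ walk (a fold being a backtrack). One complete period occupies $2n$ edges and advances the walk by exactly $r=2ms$; hence a period consists of $n+ms$ forward steps and $n-ms$ backward steps, and by periodicity any initial segment of a period contains at most $n-ms\le n-m$ backward steps.

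Then I would count the incomplete period. The path from the first $k$ periods ends at $b_{r+1,k}$, at position $2msk$, and the incomplete copy $\be_{k+1}$, spanned over $a_{2nk+1},\dots,a_{2nk+i}$, continues the walk for $i-1$ further steps. By the previous paragraph these contain at most $n-ms$ backward steps, so the walk reaches position at least $2msk+\max\{0,(i-1)-2(n-ms)\}\ge 2mk+\max\{0,\,i-1-2n+2m\}$, where the last inequality uses $s\ge1$. Consequently the stretch of the reduced path $\anki\ol{\varphi}$ from $b_{1,1}$ to the farthest vertex reached has at least $2mk+\max\{1,\,i+2m-2n\}=2mk+j$ vertices, and being an internal portion of $\anki\ol{\varphi}$ it carries no essential thorn; this settles case (i). Case (ii), where the chosen geodesic begins at a right vertex (the situation $h$ even, for which the period geodesic runs $a_2\pi_\be\to a_{2n}\pi_\be$), is handled identically, the only change being that the incomplete copy now contributes one additional step, so that $i$ is replaced by $i+1$ and $j=\max\{1,\,i+1+2m-2n\}$.

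The step I expect to be the main obstacle is the bookkeeping of the walk in the middle paragraphs: one must verify rigorously that reducing the periodic skeleton is a genuine $\pm1$ walk advancing by exactly $r=2ms$ per period with at most $n-ms$ backward steps, and that the attachments $\Ls(a,\varphi)$ and $\Rs(b,\varphi)$ hanging off the skeleton cannot fold away any vertex of the constructed geodesic. This is precisely where the hypotheses $\co_2(\unki\varphi)\subseteq D_m$ and $\cb_{b_2}\neq\cb_{b_r}$ are used, together with the edge-folding reducedness and thorn-freeness of the path, following the pattern of the proof of Lemma 5.1 in \cite{l4}.
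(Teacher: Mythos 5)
Your proposal rests on two claims that you yourself defer, and neither is actually covered by the tools you invoke. First, you run the $\pm1$ walk inside $\anki\ol{\varphi}$, i.e.\ inside the reduction of the whole graph $\De(\unki\varphi)$ with all the attached trees $\Ls(a,\varphi)$ and $\Rs(b,\varphi)$ present; nothing in the proposal rules out foldings propagating from these attachments that identify skeleton images or collapse parts of your path. Moreover, your preliminary claim that $\anki\ol{\varphi}$ is a path overlooks the diagonal pairs $(x_t,x_t)\in D_m$: these allow essential thorns (hence vertices of degree $3$) and equal-labelled adjacent vertices (hence thorn contractions, i.e.\ walk steps of size $0$), so the walk need be neither $\pm1$ nor on a line. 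The paper disposes of all of this in its opening sentence by passing to the reduced skeleton: $\al=\sk(\unki,\varphi)$ is a subgraph of $\De(\unki\varphi)$ with the same distinguished vertices, hence $\ol{\al}$ is a subgraph of $\anki\ol{\varphi}$, and the entire argument thereafter lives in $\ol{\al}$ --- a reduction your proposal never makes.

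Second, and more seriously, the inequality that actually produces $j$ --- ``the incomplete copy $\be_{k+1}$ contributes at most $n-ms$ backward steps'' --- is asserted only ``by periodicity'', and that is precisely the whole difficulty. The count $n-ms$ is a fact about a \emph{complete} period; the last $i-1$ skeleton edges sit at the boundary of the graph, so their folding pattern inside $\ol{\al}$ is not a priori an initial segment of the pattern of a complete period, and neither the pre-lemma construction nor Lemma 5.1 of \cite{l4} (both of which concern complete periods only) gives you this. The paper is engineered to avoid ever proving such a statement: it sandwiches $\ga=\sk(u_{n,k,1},\varphi)\subseteq\al\subseteq\ga_1=\sk(u_{n,k+1,1},\varphi)$, takes the surviving geodesic $\eta_1$ with $r(k+1)+1$ vertices in $\ol{\ga_1}$, and then counts: $\ga_1\setminus\al$ has only $2n+1-i$ vertices, so at most $2n+1-i$ vertices of $\eta_1$ can fail to lie in the image of $\al$, whence $p\geq r+1-(2n+1-i)\geq 2m+i-2n$; the extra $+1$ in case (ii) comes from observing that $a_{2n(k+1)+1}\pi_{\ga_1}\notin\eta_1$, not from ``one additional step'' of a walk. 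Your periodicity intuition is in fact correct and can be made rigorous: when the skeleton has no equal-labelled adjacent vertices, its label sequence is a walk in the $2m$-cycle of labels, folding the skeleton amounts to taking the image of the lift of this walk to the universal cover $\mathbb{Z}$, and periodicity of the lift gives exactly your claim. But that lifting argument (together with a separate treatment of the degenerate label patterns) appears nowhere in the proposal, so as written the step that yields $2mk+j$ is unsupported.
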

 
\begin{proof}
We shall assume that $\unki\varphi\approx\vnki\varphi$ is not a trivial identity and so we need to prove that $\anki\ol{\varphi}$ has a geodesic path with no essential thorn and with at least $2mk+j$ vertices. Since $\al=\sk( \unki,\varphi)$ is a subgraph of $\De(\unki\varphi)$ (with the same distinguished vertices), $\ol{\al}$ is also a subgraph of $\anki\ol{\varphi}=\ol{\De(\unki\varphi)}$. Hence, we just need to show that $\ol{\al}$ has a geodesic path with no thorns and with at least $2mk+j$ vertices.

Let $\ga=\sk(u_{n,k,1},\varphi)$ and denote by $\eta$ the geodesic path
\[b_{1,1},\cdots,b_{r,1},b_{1,2},\cdots,b_{r,2},\;\cdots\;, b_{1,k},\cdots, b_{r,k},b_{r+1,k}\]
inside $\ol{\ga}$ constructed above. Let now $\ga_1=\sk(u_{n,k+1,1},\varphi)$. By the same argumentation made prior to this result, we can extend $\eta$ to a geodesic path $\eta_1$ inside $\ol{\ga_1}$:
\[b_{1,1},\cdots ,b_{r,1},b_{1,2},\cdots ,b_{r,2}, \;\cdots\;, b_{1,k},\cdots, b_{r,k},b_{1,k+1},\cdots ,b_{r,k+1}, b_{r+1,k+1}\]
(note that $b_{r+1,k}=b_{1,k+1}$). Further, the path $b_{1,k+1},\cdots ,b_{r,k+1}, b_{r+1,k+1}$ is another copy of $b_1,b_2,\cdots, b_{r+1}$, and $\eta_1$ has $r(k+1)+1=2ms (k+1)+1$ vertices and no thorn.

Now, $\ga$ is a subgraph of $\al$ which in turn is a subgraph of $\ga_1$. Thus $\wt{\ga}$ is a subgraph of $\wt{\al}$ which in turn is a subgraph of $\wt{\ga_1}$. In particular, there exists a maximal $p\in\{1,\cdots,r+1\}$  such that 
\[b_{1,1},\cdots ,b_{r,1},b_{1,2},\cdots ,b_{r,2}, \;\cdots\;, b_{1,k},\cdots, b_{r,k},b_{1,k+1},\cdots ,b_{p,k+1}\]
is a geodesic path of $\wt{\al}$. Let $\eta_2$ be this geodesic path and let $\pi_{\ga_1}:\ga_1 \rightarrow \wt{\ga_1}$ be the natural graph homomorphism from $\ga_1$ onto $\wt{\ga_1}$. Since $\ga_1\setminus\al$ has $2n+1-i$ vertices, $\wt{\ga_1}\setminus(\al\pi_{\ga_1})$ has at most $2n+1-i$ vertices. Hence 
\[p\geq r+1-(2n+1-i)=r+i-2n\geq 2m+i-2n.\]
However, if $b_{1,1}$ is a right vertex (that is, $b_1=a_2\pi_{\be}$), then $a_{2n(k+1)+1}\pi_{\ga_1}\not\in \eta_1$ and so 
\[p\geq r+1-(2n-i)\geq 2m+i+1-2n.\]
Let $j=\max\{1,2m+i-2n\}$ if $b_{1,1}$ is a left vertex and let $j=\max\{1,2m+i+1-2n\}$ if $b_{1,1}$ is a right vertex. Then $\eta_2$ has at least $2mk+j$ vertices. Since $\eta_2$ is edge-folding reduced and has no thorns, we conclude that $\eta_2$ is also a geodesic path in $\ol{\al}$ with at least $2mk+j$ vertices.
\end{proof}

We can finish now the proof of our claim.

\begin{prop}\label{n_m_iff}
Let $n,m\geq 2$, $i\in\{1,\cdots,2n\}$, $j\in\{1,\cdots, 2m\}$ and $k,l\geq 1$. Then $\mlj$ is a consequence of $\nki$ \iff\ 
\begin{itemize}
\item[$(i)$] $n\geq m$ and $l>k$, or 
\item[$(ii)$] $n\geq m$, $l=k$ and $j\geq i+2m-2n$.
\end{itemize}
\end{prop}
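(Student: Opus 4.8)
The \iff\ splits into two directions, and the ``if'' part is immediate: conditions $(i)$ and $(ii)$ both force $m\le n$, and together with $m,n\ge 2$ they are precisely the hypotheses of Proposition \ref{n_m_if}, which already delivers that $\mlj$ is a consequence of $\nki$. So the entire content sits in the ``only if'' direction, and the plan is to assume $\mlj$ is a consequence of $\nki$ and extract $(i)$ or $(ii)$. The first reduction is that $m\le n$: if $m>n$, then by Lemma \ref{uisoterm} the word $\umlj$ is an isoterm for $u_{n,1,1}\approx v_{n,1,1}$, hence (as $\unkiv$ is a consequence of $u_{n,1,1}\approx v_{n,1,1}$ by Proposition \ref{nki_nlj}) also an isoterm for $\unkiv$; thus $\umlj\approx\vmlj$ could not be a non-trivial consequence of $\unkiv$, a contradiction. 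From now on $m\le n$, and it remains to establish the size inequality that separates the two admissible cases.

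Next I would isolate a single ``responsible'' application, exactly as in the proof of Lemma \ref{singlepair}. Writing a derivation $\umlj=w_0,w_1,\dots,w_p=\vmlj$ by instances of $\unkiv$, we have $\amlj\le\Te(w_0)$ but $\amlj\not\le\Te(w_p)=\bmlj$ (since $\bmlj$ is strictly below $\amlj$), so there is a first step $w_{t-1}\to w_t$ with $\amlj\le\Te(w_{t-1})$ and $\amlj\not\le\Te(w_t)$. Meeting with $\umlj$, set $W=\umlj\wedge w_{t-1}$ and $W'=\umlj\wedge w_t$; then $\Te(W)=\amlj$ while $\Te(W')<\amlj$, and $W'$ is still obtained from $W$ by a single instance $\varphi$ of $\unkiv$. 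In particular $\unki\varphi\approx\vnki\varphi$ is non-trivial (otherwise $\Te(W)=\Te(W')$), and the source subword of that application, one of $\unki\varphi$ or $\vnki\varphi$, is a subword of $W$, which reduces to $\amlj$; since $\unkiv$ is satisfied by all \spseudo s, rewriting by it preserves $\co_2$, so $\co_2(\unki\varphi)\subseteq D_m$.

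Now Lemma \ref{unki_phi} applies to $\varphi$: $\anki\ol{\varphi}$ contains a geodesic path $P$ with no essential thorn and at least $2mk+\max\{1,\,i+2m-2n\}$ vertices (the right-vertex case $(ii)$ of that lemma only enlarges the bound). The crux is to push $P$ into $\amlj$. If the responsible application is $\unki\varphi\to\vnki\varphi$, then $\anki\ol{\varphi}=\Te(\unki\varphi)$ with $\unki\varphi$ a subword of $W$; if it is the reverse $\vnki\varphi\to\unki\varphi$, then $\vnki\varphi$ is a subword of $W$ and $P\subseteq\anki\ol{\varphi}\subseteq\bnki\ol{\varphi}=\Te(\vnki\varphi)$, using that $\ol{\varphi}$ preserves $\le$ so that $\bnki\ol{\varphi}\le\anki\ol{\varphi}$. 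Either way $P$ lies inside $\ol{\De(s)}$ for a subword $s$ of $W$, and running the argument of Lemma \ref{unki*iso}---$P$ is edge-folding reduced and thorn-free, hence $P=\wt{P}$ is a subgraph of $\wt{\De(W)}$ and, having no essential thorn, a subgraph of $\ol{\De(W)}=\amlj$. Since $\amlj$ is a simple zig-zag path on $2ml+j$ vertices, the connected subgraph $P$ is a subpath, whence $2ml+j\ge 2mk+\max\{1,\,i+2m-2n\}$.

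The conclusion is then pure arithmetic. If $l<k$ then $2ml+j\le 2m(k-1)+2m=2mk<2mk+1\le 2mk+\max\{1,\,i+2m-2n\}$, contradicting the inequality, so $l\ge k$; if $l>k$ we are in case $(i)$; and if $l=k$ the inequality forces $j\ge\max\{1,\,i+2m-2n\}\ge i+2m-2n$, which is case $(ii)$. The main obstacle, and the only delicate point, is the embedding step of the third paragraph: one must handle both directions of the responsible application and verify that the long path produced by Lemma \ref{unki_phi} genuinely descends to a subgraph of the \emph{reduced} graph $\amlj$. This is where the reduced-form machinery of \cite{l4}---edge-foldings commuting with passage to subwords, together with thorn-freeness---must be invoked exactly as in Lemma \ref{unki*iso}.
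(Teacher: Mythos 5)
Your proposal is correct and follows essentially the same route as the paper: the ``if'' part is Proposition \ref{n_m_if}, the reduction to $m\leq n$ uses Lemma \ref{uisoterm} with Proposition \ref{nki_nlj}, and the size inequality comes from Lemma \ref{unki_phi} followed by embedding the long thorn-free geodesic path into $\amlj$ (exactly the Lemma \ref{unki*iso}-style step you flag) and counting vertices. The only difference is organizational: the paper argues contrapositively, showing that under the negated inequality the word $\umlj$ is an isoterm for the identity \unkiv , whereas you localize one responsible non-trivial rewriting step via the derivation-sequence-and-meet technique of Lemma \ref{singlepair} and then conclude by arithmetic --- a repackaging of the same argument, not a different one.
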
 

\begin{proof}
By Proposition \ref{n_m_if} we only need to prove the `only if' part. Assume that $\mlj$ is a consequence of $\nki$. We have observed already that we must have $m\leq n$. So, we just need to prove that either $l>k$, or $l=k$ and $j\geq i+2m-2n$. We shall prove this by assuming the opposite and getting a contradiction. Hence, assume that $l<k$ or that $l=k$ and $j<i+2m-2n$.

Let $u_{m,l,j}\approx u$ be a trivial identity and let $\varphi$ be an endomorphism of $F_2(X)$ such that $\unki\varphi$ or $\vnki\varphi$ is a subword of $u$. In particular, $\unki\varphi$ is always a subword of $u$ and $\co_2(\unki\varphi)\subseteq D_m$. By the previous lemma, $\unki\varphi\approx\vnki\varphi$ is a trivial identity or $\anki\ol{\varphi}=\Te(\unki\varphi)$ has a geodesic path with no thorns and with at least $2mk+j_1$ vertices for $j_1=\max\{1,i+2m-2n\}$. But if the latter case occurs, then $\al_{m,l,j}=\Te(u)$ would contain that same geodesic path, whence $2ml+j\geq 2mk+j_1$ and either $l>k$, or $l=k$ and $j\geq j_1\geq i+2m-2n$. By the assumption we made, we must have the former case, that is, $\unki\varphi \approx\vnki\varphi$ is a trivial identity. It is evident now that, under our assumption, the word $u_{m,l,j}$ is an isoterm for the identity \unkiv , and so $\mlj$ cannot be a consequence of $\nki$. Therefore, for $\mlj$ to be a consequence of $\nki$, we must have, beside $m\leq n$, either $l>k$, or $l=k$ and $j\geq i+2m-2n$. 
\end{proof}

To deal with the case where one of the pairs is the dual pair $\mljc$ for $j$ odd, we begin with the following lemma.

\begin{lem}\label{*j=i+2m-2n}
Let $n>m\geq 2$, $k\geq 1$ and $i\in\{1,\cdots,2n\}$ odd such that $j=i+2m-2n\geq 1$. Then $(\al_{m,k,j}^*, \be_{m,k,j}^*)$ is not a consequence of $\nki$. 
\end{lem}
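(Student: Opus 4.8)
The plan is to show that, under the hypotheses, $u_{m,k,j}^*$ is an isoterm for the identity \unkiv\ relative to {\bf PS}. This is enough: were $(\al_{m,k,j}^*,\be_{m,k,j}^*)$ a consequence of $\nki$, the identity $u_{m,k,j}^*\approx v_{m,k,j}^*$ would be a consequence of \unkiv, and the isoterm property would force it to be trivial, i.e. $\al_{m,k,j}^*=\be_{m,k,j}^*$, contradicting $\be_{m,k,j}^*<\al_{m,k,j}^*$. Observe first that, since $i$ is odd and $2m-2n$ is even, $j=i+2m-2n$ is odd. Following the pattern of Lemma \ref{unki*iso}, I would let $u\in F_2(X)$ be such that $u_{m,k,j}^*\approx u$ is trivial (so $\Te(u)=\al_{m,k,j}^*$) and let $\varphi$ be an endomorphism of $F_2(X)$ with $u_{n,k,i}\varphi$ a subword of $u$; then $\co_2(u_{n,k,i}\varphi)\subseteq\co_2(u)=D_m^*$. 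The whole matter reduces to proving that $u_{n,k,i}\varphi\approx v_{n,k,i}\varphi$ is a trivial identity.

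Here I would invoke the left-right dual of Lemma \ref{unki_phi}. Since $D_m^*$ is the mirror image of $D_m$, and since the edge-folding analysis of $\sk(u_{n,1,1},\varphi)$ preceding that lemma, together with Lemma \ref{uniqueword}, all admit obvious left-right duals, the same conclusion holds for $\co_2(u_{n,k,i}\varphi)\subseteq D_m^*$: if $u_{n,k,i}\varphi\approx v_{n,k,i}\varphi$ is not trivial, then $\anki\ol{\varphi}=\Te(u_{n,k,i}\varphi)$ contains a geodesic path with no essential thorn and with at least $2mk+j'$ vertices, where $j'=\max\{1,i+2m-2n\}=j$ if the path begins with a left vertex and $j'=\max\{1,i+1+2m-2n\}=j+1$ if it begins with a right vertex (both using $j=i+2m-2n\ge 1$). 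As in the proof of Proposition \ref{n_m_iff}, this path is thorn-free and edge-folding reduced, hence survives every reduction and is a sub-path of $\Te(u)=\al_{m,k,j}^*$.

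The parity of $j$ now yields the contradiction. All vertices of $\al_{m,k,j}^*$ having degree at most two, it is a simple path on $2mk+j$ vertices, and because $j$ is odd both of its degree-one ends are right vertices. Hence any sub-path of $\al_{m,k,j}^*$ one of whose ends is a left vertex must omit an end of $\al_{m,k,j}^*$, and so has at most $2mk+j-1$ vertices, while the full graph has $2mk+j$. If the geodesic path found above begins with a left vertex, it has at least $2mk+j$ vertices yet must fit in at most $2mk+j-1$; if it begins with a right vertex, it has at least $2mk+j+1>2mk+j$ vertices. In either case we reach a contradiction, so $u_{n,k,i}\varphi\approx v_{n,k,i}\varphi$ is trivial. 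Thus $u_{m,k,j}^*$ is an isoterm for \unkiv, and the lemma follows.

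The step I expect to be the real obstacle is the justification of the dual of Lemma \ref{unki_phi}: one must carry out the left-right mirror of the entire edge-folding argument preceding it, verifying that for content $D_m^*$ the cases `$h$ odd' and `$h$ even' (equivalently, whether the surviving geodesic through $\sk(u_{n,1,1},\varphi)$ is taken between left or between right vertices) are the interchange of those in the original, while the numerical bounds on $j'$ stay exactly as stated. Granting this, the argument turns entirely on the single-vertex gap produced by $j$ being odd, in precisely the way the `one less left vertex' count drives Lemma \ref{unki*iso}; this is exactly where the hypotheses that $i$ is odd and that $j=i+2m-2n$ takes its extremal value are used.
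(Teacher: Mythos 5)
Your argument is correct and shares its core with the paper's proof: both reduce the lemma to showing that a word whose $\Te$-image is $\al_{m,k,j}^*$ is an isoterm for \unkiv, both extract from a Lemma \ref{unki_phi}--type bound a thorn-free geodesic path in $\Te(\unki\varphi)$ with at least $2mk+j$ vertices (left start) or $2mk+j+1$ vertices (right start), and both get the contradiction from the observation that, $j$ being odd, the target is a path on $2mk+j$ vertices whose two degree-one ends are right vertices. The genuine difference is where the starred content is handled. You keep the pair $(\al_{m,k,j}^*,\be_{m,k,j}^*)$ untouched, so that $\co_2(\unki\varphi)\subseteq D_m^*$, and you must appeal to a variant of Lemma \ref{unki_phi} for content $D_m^*$ (not literally its left-right dual, which would concern $\unki^*$; what you need is the unstarred word against starred content, with the left/right conditions unswapped --- and you do state these correctly). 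The paper avoids this issue entirely: it first applies the automorphism $\psi$ induced by the permutation $\tau$ to the starred pair, producing an equivalent pair $(\alpha,\beta)$ with $\co_2(\alpha)=D_m$ but with the same underlying path (ends still right vertices), so that Lemma \ref{unki_phi} applies verbatim and the same count finishes the proof. This relabeling also dissolves the step you flag as ``the real obstacle,'' with no need to redo the edge-folding analysis: given $\varphi$ with $\co_2(\unki\varphi)\subseteq D_m^*$, the composite endomorphism $\varphi\psi$ satisfies $\co_2(\unki\varphi\psi)\subseteq D_m$; since $\psi$ is an automorphism, $\unki\varphi\psi\approx\vnki\varphi\psi$ is trivial \iff\ $\unki\varphi\approx\vnki\varphi$ is, and $\Te(\unki\varphi\psi)$ is just $\Te(\unki\varphi)$ with labels permuted, so geodesic paths and the left/right nature of their initial vertices correspond exactly. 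Hence your variant (same bounds, same left/right assignment, only the parity cases for $h$ interchanged) is a formal consequence of Lemma \ref{unki_phi}, confirming your expectation that the counting is governed by the source graph $\anki$, whose extreme vertices are left vertices, and not by the target content.
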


\begin{proof}
Let $\psi$ be the automorphism of $F_2(X)$ induced by the permutation
\[\tau=\left(\begin{array}{cccccc}
x_1 & x_2&x_3&x_4&\cdots &x_{2m}\\
x_2&x_1&x_{2m}&x_{2m-1}&\cdots &x_3
\end{array}\right)\, ,\]
and let $\alpha=\al_{m,k,j}^*\psi$ and $\beta= \be_{m,k,j}^*\psi$. Thus $\alpha$ and $\beta$ are obtained from $\al_{m,k,j}^*$ and $\be_{m,k,j}^*$, respectively, by replacing each label $x_t$ with $x_t\psi$. Then $(\alpha,\beta)$ is equivalent to $(\al_{m,k,j}^*, \be_{m,k,j}^*)$, and $\co_2(\alpha)= D_m$. Let $u,v\in F_2(X)$ be such that $\De(u)=\alpha$ and $\De(v)=\beta$. Let also $u\approx u'$ be a trivial identity and $\varphi$ be an endomorphism of $F_2(X)$ such that $u_{n,k,i}\varphi$ is a subword of $u'$. This result becomes proved once we show that $u_{n,k,i}\varphi\approx v_{n,k,i}\varphi$ is a trivial identity. Indeed, this fact implies that $u$ is an isoterm for \unkiv , and so $(\al_{m,k,j}^*, \be_{m,k,j}^*)$ is not a consequence of $\nki$.

So, assume that $u_{n,k,i}\varphi\approx v_{n,k,i}\varphi$ is not a trivial identity. Then, by Lemma \ref{unki_phi}, $\anki\ol{\varphi}$ has a geodesic path $\eta$ with no thorn and with at least $2mk+j_1$ vertices for $j_1=j$ if $\eta$ starts with a left vertex and for $j_1=j+1$ if $\eta$ starts with a right vertex. Since $\unki\varphi$ is a subword of $u'$ and $\eta$ has no thorn, than $\eta$ is a subgraph of $\alpha$. But note that the longest geodesic path in $\alpha$ has $2mk+j$ vertices and it starts with right vertices. Hence $u_{n,k,i}\varphi\approx v_{n,k,i}\varphi$ must be a trivial identity.
\end{proof}

\begin{cor}\label{n_m*_iff}
Let $n,m\geq 2$, $i\in\{1,\cdots,2n\}$, $j\in\{1,\cdots, 2m\}$ odd and $k,l\geq 1$. 
\begin{enumerate}
\item\label{**} $\mljc$ is a consequence of $\nkic$ \iff\
\begin{itemize}
\item[$(i)$] $n\geq m$ and $l>k$; or
\item[$(ii)$] $n\geq m$, $l=k$ and $j\geq i+2m-2n$.
\end{itemize}
\item\label{*not} $\mljc$ is a consequence of $\nki$ \iff\ either
\begin{itemize}
\item[$(i)$] $n\geq m$ and $l>k$; or 
\item[$(ii)$] $n\geq m$, $l=k$ and $j>i+2m-2n$.
\end{itemize}
\item\label{not*} $\nki$ is a consequence of $\mljc$ \iff\ either
\begin{itemize}
\item[$(i)$] $m\geq n$ and $k>l$; or 
\item[$(ii)$] $m\geq n$, $k=l$ and $i>j+2n-2m$.
\end{itemize}
\end{enumerate}
\end{cor}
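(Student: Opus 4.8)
The plan is to derive (1) and (3) from (2) and Proposition~\ref{n_m_iff} by the left--right duality, and to prove (2) directly. Write $*$ for the operation $(\al,\be)\mapsto(\al^*,\be^*)$; since $\bf PS$ is closed under the left--right dual, $*$ is an automorphism of the free \pseudo\ preserving the consequence relation, and it sends $\nki\mapsto\nkic$ and $\mlj\mapsto\mljc$. Applying $*$ to Proposition~\ref{n_m_iff} therefore yields (1) verbatim (the numerical conditions are untouched by $*$, and the hypothesis ``$j$ odd'' is simply inherited). For (3), note that ``$\nki$ is a consequence of $\mljc$'' is, after applying $*$, the same as ``$\nkic$ is a consequence of $\mlj$''. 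When $i$ is odd this is an instance of (2) with the two index--triples interchanged, and reading off its conclusion with the names swapped produces exactly the inequalities $k>l$ and $i>j+2n-2m$; when $i$ is even one has $\nki\equiv\nkic$ by Proposition~\ref{CompDualPEven}, so the assertion reduces instead to (1) with the roles of the two pairs reversed, and since $j+2n-2m$ is odd while $i$ is even the ``$\geq$'' coming from (1) coincides with the displayed ``$>$''. Thus everything rests on (2).

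For the ``if'' half of (2) the plan is to use the chain
\[\nki\ \Longrightarrow\ (\al_{n,k,i+1}^*,\be_{n,k,i+1}^*)\ \Longrightarrow\ \mljc\,,\]
whose first arrow is Lemma~\ref{CompDualP} (the wrap--around $(\al_{n,k,2n},\be_{n,k,2n})\Rightarrow(\al_{n,k+1,1}^*,\be_{n,k+1,1}^*)$ taking care of $i=2n$) and whose second arrow is (1). By (1) the second arrow is valid precisely when $n\geq m$ and either $l>k$ or $l=k$ and $j\geq(i+1)+2m-2n$; since $(i+1)+2m-2n=i+2m-2n+1$, the latter reads $j>i+2m-2n$, which is condition (ii), while the wrap--around reproduces condition (i) in the case $i=2n$ (where (ii) is vacuous because $j\leq 2m$).

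For the ``only if'' half I would show, generalising Lemma~\ref{*j=i+2m-2n}, that the failure of (i) and (ii) forces $u_{m,l,j}^*$ to be an isoterm for \unkiv. First $m\leq n$: if $m>n$ then $u_{m,l,j}$---and, the same argument applying to its dual, also $u_{m,l,j}^*$---is an isoterm for \unkiv\ by Lemma~\ref{uisoterm} and Proposition~\ref{nki_nlj}, so $\mljc$ cannot be a non--trivial consequence of $\nki$. Assuming $m\leq n$ and that (i) and (ii) both fail, fix a word $u$ with $u_{m,l,j}^*\approx u$ trivial and an endomorphism $\varphi$ with $\unki\varphi$ a subword of $u$, so that $\co_2(\unki\varphi)\subseteq D_m$. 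Were $\unki\varphi\approx\vnki\varphi$ non--trivial, Lemma~\ref{unki_phi} would produce inside $\anki\ol\varphi$ a thornless geodesic path $\eta$ with at least $2mk+j_1$ vertices, where $j_1=\max\{1,i+2m-2n\}$ if $\eta$ starts on the left and $j_1=\max\{1,i+1+2m-2n\}$ if it starts on the right; and since $\unki\varphi$ is a subword of $u$, this thornless $\eta$ must embed in $\al_{m,l,j}^*$ (relabelled so as to have $2$-content $D_m$), whose longest thornless geodesic path has $2ml+j$ vertices and starts on the right. When $l<k$ the bound $2ml+j\leq 2mk-1<2mk+j_1$ already forbids the embedding; when $l=k$ and $j\leq i+2m-2n$, a left--starting $\eta$ would need at least $2mk+(i+2m-2n)\geq 2mk+j$ vertices and could therefore only be the longest path, which however starts on the right, whereas a right--starting $\eta$ would need at least $2mk+(i+2m-2n)+1>2mk+j$ vertices; both are impossible. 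Hence $\unki\varphi\approx\vnki\varphi$ is trivial, $u_{m,l,j}^*$ is an isoterm, and $\mljc$ is not a consequence of $\nki$.

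The main obstacle is the bookkeeping at the diagonal $l=k$, $j=i+2m-2n$ (the case of Lemma~\ref{*j=i+2m-2n}): there the crude length count gives only equality, and one must track whether the path furnished by Lemma~\ref{unki_phi} begins on the left or on the right, so as to extract the extra vertex that rules the embedding out. Carrying out this start--direction analysis correctly, and verifying the structural claim that the relabelled $\al_{m,l,j}^*$ has its unique longest thornless geodesic path starting with a right vertex, is the delicate step; the subcases $l<k$ and $j<i+2m-2n$ then fall to the plain length inequality, and the reduction $m\leq n$ needs only the dual of Lemma~\ref{uisoterm}.
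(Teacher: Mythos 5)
Your proposal is correct, and its skeleton coincides with the paper's: part (1) is obtained as the left--right dual of Proposition~\ref{n_m_iff}, part (3) as the dual of part (2) (your explicit treatment of $i$ even via Proposition~\ref{CompDualPEven} fills in a case the paper's terse ``(3) is the dual of (2)'' glosses over), and your ``if'' half of (2) is exactly the paper's chain $\nki\Rightarrow(\al_{n,k,i+1}^*,\be_{n,k,i+1}^*)\Rightarrow\mljc$ through Lemma~\ref{CompDualP} and part (1), including the wrap-around at $i=2n$. The genuine divergence is in the ``only if'' half of (2). The paper first disposes of $i$ even by $\nki\equiv\nkic$ and (1); for $i$ odd it deploys the isoterm/geodesic-path machinery only at the single boundary case $m<n$, $l=k$, $j=i+2m-2n$ (Lemma~\ref{*j=i+2m-2n}), and excludes all other failing cases by bookkeeping: Corollary~\ref{nki_nlj_iff} makes $(\al_{m,l,j+1},\be_{m,l,j+1})$ a consequence of $\mljc$, whence Proposition~\ref{n_m_iff} and a parity count force $n\geq m$ and $l>k$, or $l=k$ and $j\geq i+2m-2n$. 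You instead re-run the isoterm argument (Lemma~\ref{unki_phi} plus the start-direction analysis) uniformly over all failing cases and both parities of $i$; your length counts are right, and the structural claim you flag as delicate (both ends of the path $\al_{m,l,j}^*$ are right vertices, since $j$ is odd and the graph is a path with an odd number of vertices) is exactly the fact the paper itself asserts inside Lemma~\ref{*j=i+2m-2n}. Your route is more self-contained and avoids the case split on $i$, at the cost of redoing geometry the paper gets for free from already-proved comparisons. Two imprecisions to repair in a write-up, neither fatal: first, $*$ is not an automorphism of the free \pseudo\ but the left--right duality (an involutive anti-isomorphism, inducing an automorphism of ${\mathcal{L}}({\bf PS})$), which carries $\nki$ to a \emph{relabelled} copy of $\nkic$ --- this is all the paper uses too, but it should be said correctly; second, since $u_{m,l,j}^*$ has 2-content $D_m^*$ rather than $D_m$, you must relabel it by the automorphism induced by $\tau$ \emph{at the outset} (as the paper does in Lemma~\ref{*j=i+2m-2n}) before Lemma~\ref{unki_phi} --- whose hypothesis is $\co_2(\unki\varphi)\subseteq D_m$ --- or the remark generalising Lemma~\ref{uisoterm} (needed for your exclusion of $m>n$) can be invoked; your parenthetical relabelling remark does this, but it belongs at the start of the argument, not in the middle.
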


\begin{proof}
(\ref{**}) is just the dual of Proposition \ref{n_m_iff}, while (\ref{not*}) is the dual of (\ref{*not}). Hence, we shall prove only (\ref{*not}). If $i$ is even, then $\nki$ is equivalent to $\nkic$ and (\ref{*not}) follows from (\ref{**}) in this case ($j>i+2m-2n$ if $j\geq i+2m-2n$ because $j$ is odd and $i$ is even). Thus, assume $i$ is odd, and note that $(\al_{n,k,i+1}^*,\be_{n,k,i+1}^*)$ is a consequence of $\nki$ by Corollary \ref{nki_nlj_iff}. Thereby, if either $n\geq m$ and $l>k$, or $n\geq m$, $l=k$ and $j>i+2m-2n$, then $\mljc$ is a consequence of $\nki$ since it is a consequence of $(\al_{n,k,i+1}^*, \be_{n,k,i+1}^*)$ by (\ref{**}); we have shown the `if' part of (\ref{*not}) for $i$ odd. But if $m\leq n$, $l=k$ and $j=i+2m-2n$, we know from Lemma \ref{*j=i+2m-2n} that $\mljc$ is not a consequence of $\nki$. The `only if' part for $i$ odd follows now from Corollary \ref{nki_nlj_iff}.
\end{proof}

\noindent{\bf Acknowledgments}: This work was partially supported by the European Regional Development Fund through the programme COMPETE and by the Portuguese Government through the FCT – Funda\c c\~ao para a Ci\^encia e a Tecnologia under the project PEst-C/MAT/UI0144/2011.

\end{document}